\numberwithin{equation}{section}
\title{Group actions on Segal operads}
\author[Julia E. Bergner]{Julia E. Bergner}\thanks{The first-named author was partially supported by NSF grants DMS-0805951 and DMS-1105766, and by a UCR Regents Fellowship.}
\address{Department of Mathematics, University of California, Riverside}
\email{bergnerj@member.ams.org}
\author{Philip Hackney}
\email{hackney@math.ucr.edu}
\date{\today}
\subjclass[2010]{Primary 55P48; Secondary 55U10, 55U40, 18C10, 18D50, 18G30, 18G55}
\keywords{$(\infty, 1)$-operads, dendroidal spaces, algebraic theories, group actions}
\theoremstyle{plain}
\newtheorem{thm}[equation]{Theorem}
\newtheorem{lem}[equation]{Lemma}
\newtheorem{cor}[equation]{Corollary}
\newtheorem{prop}[equation]{Proposition}
\theoremstyle{definition}
\newtheorem{defn}[equation]{Definition}
\newtheorem{rem}[equation]{Remark}
\newtheorem{example}[equation]{Example}
\newcommand{\Hom}{\operatorname{Hom}}
\newcommand{\N}{\mathbb{N}}
\newcommand{\ob}{\operatorname{Ob}}
\newcommand{\mor}{\operatorname{Mor}}
\newcommand{\mcc}{\mathcal{C}}
\newcommand{\mct}{\mathcal{T}}
\newcommand{\id}{\operatorname{id}}
\newcommand{\set}[1]{\{#1\}}
\newcommand{\setm}[2]{\{\,#1\mid#2\,\}}
\newcommand{\ci}{\subset}
\newcommand{\inv}{{-1}}
\newcommand{\colim}{\operatorname{colim}}
\newcommand{\del}{\mathbf{\Delta}}
\newcommand{\delo}{\del^{op}}
\newcommand{\om}{\mathbf{\Omega}}
\newcommand{\omop}{{\om^{op}}}
\newcommand{\Set}{{\mathcal{S}et}}
\newcommand{\sset}{{s\mathcal{S}et}}
\newcommand{\dsset}{{d\sset}}
\newcommand{\dset}{{d\Set}}
\newcommand{\dspace}{{\dsset}}
\newcommand{\oper}{{\mathcal{O}}}
\newcommand{\coper}{{\mathcal{C}\mathcal{O}}}
\newcommand{\nerve}{\operatorname{nerve}}
\newcommand{\prim}{\operatorname{prim}}
\newcommand{\hocolim}{\operatorname{hocolim}}
\newcommand{\M}{\mathcal{M}}
\newcommand{\el}{\mathcal{L}}
\newcommand{\class}{\mathbf{C}}
\newcommand{\map}{\operatorname{Map}}
\newcommand{\maph}{\operatorname{Map}^h}
\newcommand{\T}{\mathcal{T}}
\newcommand{\inclo}{\delta} 
\newcommand{\newPsi}{\Upsilon}
\newcommand{\wprim}{\widetilde{\prim}}
\newcommand{\Aut}{\operatorname{Aut}}
\newcommand{\Sets}{\Set}
\newcommand{\nsgo}{\mathcal{G}\oper}
\newcommand{\Tgop}{\mathcal T_{\nsgo}}
\newcommand{\SSets}{\sset}
\newcommand{\SSp}{\mathcal{SS}p}
\newcommand{\LSSets}{\el\sset}
\begin{document}

\begin{abstract}
We give a Quillen equivalence between model structures for simplicial operads, described via the theory of operads, and Segal operads, thought of as certain reduced dendroidal spaces.  We then extend this result to give a Quillen equivalence between the model structures for simplicial operads equipped with a group action and the corresponding Segal operads.
\end{abstract}

\maketitle

\tableofcontents

\section{Introduction}

There has been much recent interest in homotopical approaches to categories, and due to the development of dendroidal sets by Moerdijk and Weiss in \cite{mw}, some of this work is being extended to give homotopical approaches to multicategories, better known as colored operads.  Rather than simplicial categories, or categories enriched in simplicial sets, one can instead consider colored operads in simplicial sets.  Other models for so-called $(\infty, 1)$-categories are described by simplicial diagrams of either sets or spaces; replacing the simplicial indexing category $\del$ with the dendroidal category $\om$ results in models for $(\infty,1)$-operads.  These models and the comparisons between them, in the form of Quillen equivalences of model categories, are being developed in work of Cisinski and Moerdijk \cite{cm-ds}, \cite{cm-simpop}, \cite{cm-ho}.

In the world of $(\infty,1)$-categories, a beginning step in the comparison between simplicial categories and Segal categories, which can be thought of as up-to-homotopy nerves of simplicial categories, was the single-object case: the comparison between simplicial monoids and Segal monoids.  This result of the first-named author was extended to the case of simplicial categories and Segal categories with a given fixed object set, which then played a role in the proof of the general case \cite{simpmon}, \cite{thesis}.  The idea behind the proof was that simplicial monoids can be regarded as product-preserving functors from the algebraic theory of monoids $\T_M$ to the category of simplicial sets; finding an equivalence with certain kinds of diagrams from $\delo$ to the category of simplicial sets was, in a sense, replacing $\T_M$ with a simpler diagram that served the same purpose.

In the general case of simplicial colored operads and Segal colored operads, the equivalence given by Cisinski and Moerdijk did not use the same methods.  However, it is still true that ordinary (single-colored) simplicial operads can be regarded as product-preserving functors from an appropriate theory to the category of simplicial sets, and likewise for the category of simplicial operads with a fixed color set \cite{multisort}.  The first main result of this paper is to give an explicit Quillen equivalence in this case, making use of the machinery of algebraic theories.

\begin{thm}
The model category structure for Segal operads is Quillen equivalent to the model category of simplicial operads.
\end{thm}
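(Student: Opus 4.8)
The plan is to realize both model categories as homotopical versions of algebras over the algebraic theory of operads, and then to produce the equivalence by comparing a strict diagram category with its Segal (up-to-homotopy) counterpart, exactly as in the monoid case recalled in the introduction. First I would invoke the result from \cite{multisort} that a single-colored simplicial operad is the same thing as a product-preserving functor $\T_{\oper} \to \SSets$, where $\T_{\oper}$ is the algebraic theory whose generating sorts record the $n$-ary operations; the category of such algebras carries a model structure in which weak equivalences and fibrations are detected levelwise. On the other side, the category of Segal operads is a left Bousfield localization of a model structure on reduced dendroidal spaces, that is, on functors $\omop \to \SSets$, in which the local objects are precisely those satisfying the dendroidal Segal condition.

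Next I would construct the comparison functor. There is a natural functor $\omop \to \T_{\oper}$ sending a corolla with $n$ leaves to the sort of $n$-ary operations and, more generally, a tree to the product of the sorts indexed by its vertices, with the grafting structure of trees carried to the composition operations of the theory. Restriction along this functor, together with its left Kan extension, yields an adjunction between $\SSets^{\T_{\oper}}$ and $\SSets^{\omop}$. The key point is that the defining product-preservation condition for a $\T_{\oper}$-algebra becomes, after restriction, exactly the \emph{strict} dendroidal Segal condition: the value on a tree is the honest limit of the values on its corollas. Thus strict operad algebras correspond to strict Segal operads, and the adjunction is arranged so that restriction is right Quillen for the projective-type model structures.

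The final step is to promote this Quillen pair to a Quillen equivalence after localizing to Segal operads. Here I would use the rigidification machinery for algebras over algebraic theories: every homotopy-product-preserving diagram—equivalently, every dendroidal space satisfying the Segal condition only up to homotopy—is weakly equivalent to a strict one. This strictification identifies the homotopy categories on the two sides and shows that the derived unit and counit of the adjunction are weak equivalences, which together with the observation that the Segal condition is the homotopy-invariant replacement of product-preservation yields the desired Quillen equivalence.

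I expect the main obstacle to be the precise matching of the two localizations. One must verify that the local objects on the dendroidal-space side, defined by the Segal maps dictated by the combinatorics of trees, correspond exactly to the homotopy-product-preserving algebras on the theory side, and that the generating localizing maps are carried to one another by the adjunction. Checking that the tree combinatorics of $\om$ reproduce precisely the finite products appearing in $\T_{\oper}$—so that no operations or relations are lost or introduced—is the delicate bookkeeping at the heart of the argument; the subsequent strictification is then an application of the general theory, but its hypotheses must be confirmed in this specific setting.
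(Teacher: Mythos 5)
Your overall architecture is the same as the paper's: realize simplicial operads as strict $\T_\oper$-algebras, realize Segal operads as a localization of reduced dendroidal spaces, compare via the functor sending a tree to the free operad on its vertices (the paper's $J$), arrange that the localizing maps correspond under the left Kan extension $J_!$, and finish by invoking rigidification of homotopy algebras over the theory. The individual steps you list — the projective Quillen pair, the fact that restriction of a strict algebra satisfies the strict Segal condition, and the need to match the generating localizing maps — all appear in the paper, the last as its computations of $J_!\Omega[S]_*$ and $J_!Sc[S]_*$.

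However, there is a genuine gap at the decisive step, partly concealed by a circular phrase. You write that ``every homotopy-product-preserving diagram—equivalently, every dendroidal space satisfying the Segal condition only up to homotopy—is weakly equivalent to a strict one,'' and propose to conclude from rigidification that the derived unit and counit are equivalences. But rigidification (Badzioch, and its multi-sorted extension) lives entirely on the theory side: it compares homotopy $\T_\oper$-algebras, i.e., certain diagrams on $\T_\oper$, with strict ones, and says nothing about diagrams on $\omop$. The ``equivalently'' — that Segal dendroidal spaces are the same as homotopy $\T_\oper$-algebras — is exactly the content of the theorem once rigidification is granted, so it cannot be taken as an input. Concretely, what is missing is a proof that for every cofibrant reduced dendroidal space $X$ the derived unit $X \to J^{*} L J_{!} X$ is a local equivalence. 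The paper establishes this by writing $X$ as a homotopy colimit of reduced representables and then using a non-formal fact: in the localized dendroidal model structure, the fibrant replacement of $\Omega[S]_*$ is weakly equivalent to the nerve of the free operad on the vertices of $S$. That fact is proved by filtering the nerve by ``primitive dendrices'' and showing each stage of the filtration is an acyclic cofibration — a dendroidal analogue of Segal's lemma for special $\Delta$-spaces. Your proposal classifies this as ``delicate bookkeeping'' about matching finite products, but it is a substantive homotopical argument that no general strictification theorem supplies; without it (or an equivalent argument), the derived unit condition, and hence the Quillen equivalence, remains unproven.
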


This result was already proved in \cite{cm-simpop} by restricting the general case, but our use of the algebraic theory framework gives a different approach.

The second main result, whose proof follows from the same methods, is a comparison of simplicial operads with a group action and Segal operads with the analogous group action.  There are two ways to approach this situation, and we give a proof for each one.  In the first, we consider simplicial operads equipped with an action of a fixed discrete group $G$.  In the second, we look at simplicial operads equipped with an action of a simplicial group, where the group in question can vary.

\begin{thm}
Let $G$ be a discrete group.  The model category structure for Segal operads with a $G$-action is Quillen equivalent to the model category of simplicial operads with a $G$-action.
\end{thm}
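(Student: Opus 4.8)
The plan is to leverage the first main theorem, which establishes a Quillen equivalence between simplicial operads and Segal operads without group action, and then show that this equivalence is compatible with a $G$-action in a way that descends to the equivariant model structures. The essential observation is that for a discrete (or even simplicial) group $G$, the category of objects equipped with a $G$-action is the functor category from $G$, viewed as a one-object category, into the underlying category; equivalently, it consists of the fixed points of the induced $G$-action on the underlying category. Since a Quillen equivalence $F \dashv U$ between the unadorned categories is built from functors that are themselves $G$-equivariant in the appropriate sense (they are defined by manipulating the theory $\T$ or the indexing diagrams $\omop$ in a way that does not interact with the $G$-action on colors/operations), both $F$ and $U$ should prolong levelwise to functors between the categories of $G$-objects.

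First I would recall the model structures in play. A $G$-action on a simplicial operad (respectively on a Segal operad) is a group homomorphism from $G$ to the automorphism group of that object, so the category of simplicial operads with $G$-action is the category of functors $G \to \mathcal{O}$, and similarly on the Segal side. I would then equip both of these functor categories with the projective-type model structure in which weak equivalences and fibrations are detected by the forgetful functor to the underlying category — i.e.\ a map of $G$-objects is a weak equivalence (fibration) exactly when it is one after forgetting the action. That these transferred or diagram model structures exist follows from the cofibrant generation of the underlying model structures established earlier, combined with the standard fact that $G$ is a small (here discrete) indexing category.

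Next I would verify that the adjoint pair realizing the original Quillen equivalence lifts to the $G$-equivariant setting. Because the left adjoint $F$ and the right adjoint $U$ commute with the action of automorphisms — a point I would make precise by checking that the relevant functors are built from limits, colimits, and restriction/Kan extension along maps of $\om$ or $\del$, all of which are natural and hence equivariant — they induce an adjunction on the functor categories $G \to (-)$ by postcomposition. Since weak equivalences and fibrations in the $G$-model structures are created by the forgetful functors, and the underlying adjunction is already a Quillen equivalence, the lifted pair is a Quillen adjunction almost by definition, and the derived unit and counit are computed on underlying objects, where they are known to be weak equivalences. This transfers the Quillen-equivalence property directly.

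The main obstacle I expect is the precise bookkeeping needed to guarantee that the equivalence is genuinely $G$-equivariant rather than merely equivalence-invariant: one must confirm that the intermediate step replacing the algebraic theory $\T$ by the simpler diagram over $\omop$, together with the prolongation functors between product-preserving functor categories, respects the $G$-action strictly (on the nose) and not just up to coherent homotopy. If the original equivalence passes through several intermediate model categories, each comparison must be checked to be $G$-equivariant, and the fixed color set must be carried along compatibly with the group action. Handling this carefully — and ensuring the model structures on the $G$-object categories exist and that cofibrancy is preserved under the relevant forgetful and fixed-point functors — is where the real work lies, though it is largely a matter of propagating the unadorned case through the diagram-category formalism rather than introducing new homotopical input.
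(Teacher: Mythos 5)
There is a genuine gap, and it lies in your very first premise. In this paper a $G$-action on an operad $P$ is \emph{not} an action of $G$ by operad automorphisms: it is merely an action of $G$ on each space $P(n)$, with \emph{no compatibility} required with the operadic composition or unit. The authors insist on this deliberately, so as to include examples such as the endomorphism operad of a $G$-space and the circle action on the framed little disks operad, where $g\cdot(p\circ_i q)\neq (g\cdot p)\circ_i(g\cdot q)$ in general. Consequently, the category of ``simplicial operads with a $G$-action'' in this paper is \emph{not} the functor category $\mathrm{Fun}(BG,\oper(\sset))$; an object of that functor category is an operad on which $G$ acts by operad automorphisms, which is a strictly smaller class of objects (for instance, the free operad on a single arity-one generator admits only the trivial compatible action, but many incompatible ones). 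Your entire strategy --- prolonging the non-equivariant Quillen equivalence levelwise to diagram categories $\mathrm{Fun}(BG,-)$ with projective model structures --- is a standard and essentially correct technique, but it proves a theorem about the compatible notion of action, not the one asserted here. In particular it never produces the theory $\mathcal T_{G\text{-}\oper}$ of the paper, which is built from free objects in the category of operads equipped with levelwise $G$-actions, and whose algebras are genuinely different from $G$-diagrams of $\T_\oper$-algebras.

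The paper's proof is correspondingly not a formal lift of the non-equivariant case. It constructs the theory $\mathcal T_{G\text{-}\oper}$ (free $G$-operads, or equivalently a modification of $\T_\oper$ in the style of the theory of $G$-sets), introduces an indexing category $G\times\om$ with a functor to $\mathcal T_{G\text{-}\oper}^{op}$ sending a tree $R$ to the free $G$-operad on its vertices, localizes the reduced diagram category at the Segal core inclusions, and then re-runs the whole algebraic-theories argument of the earlier sections (computation of left Kan extensions of reduced representables, the filtration comparing $\Omega[S]_*$ with the nerve of a free object, and the localization comparison) in this new setting. If you want to repair your argument, you would need either to redo the comparison with the correct theory, as the paper does, or to prove separately that operads with incompatible $G$-actions and operads with actions by automorphisms have equivalent homotopy theories --- which is false at the level of the statement you would need, since the two categories of strict objects simply do not agree.
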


\begin{thm}
The model category structure for Segal operads with a simplicial group action is Quillen equivalent to the model category of simplicial operads with a simplicial group action.
\end{thm}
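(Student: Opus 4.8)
\section*{Proof proposal}

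The plan is to follow the template of the first theorem, applied now to the multisorted algebraic theory whose product-preserving functors to $\sset$ are simplicial operads equipped with an action of a \emph{varying simplicial} group. I would first make this theory $\Tgop$ explicit: its sorts consist of those controlling the colors and spaces of operations of an operad, together with a distinguished block forming a copy of the finitary algebraic theory of groups $\T_{\mathrm{grp}}$, and its operations include, besides operadic composition and group multiplication, the action operations expressing that the group acts on the operation spaces. A simplicial $\Tgop$-algebra is then a simplicial group $G$ together with a simplicial operad $\oper$ carrying a $G$-action, with $G$ allowed to vary; equipped with the theory model structure (levelwise weak equivalences and fibrations), these algebras are the model category of simplicial operads with a simplicial group action appearing in the statement.

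The second step is to identify the target of the comparison by replacing $\Tgop$ with a simpler indexing category, one block at a time. For the operadic block I would use the replacement of the theory of operads by the dendroidal category $\om$ supplied by the first theorem, modeling $\oper$ as a reduced dendroidal space satisfying the Segal condition. For the group block I would replace $\T_{\mathrm{grp}}$ by the simplex category $\del$ and model the varying simplicial group as a reduced simplicial space ($\delo\to\sset$) satisfying both the Segal condition and grouplikeness; this is the group analogue of the replacement of the theory of monoids $\T_M$ recalled in the introduction, and it is precisely the device that lets the group be simplicial rather than a fixed discrete decoration as in the second theorem. The combined indexing category is assembled from $\del$ and $\om$ together with the morphisms encoding the action, and Segal operads with a simplicial group action are the reduced diagrams on it satisfying the Segal conditions in both the $\del$- and the $\om$-directions and compatible along the action maps.

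With both model categories and the comparison functor in hand, the third step is to produce the Quillen equivalence. I would form the adjunction induced by the functor from $\Tgop$ to the combined indexing category exactly as in the first theorem, and verify the Quillen-equivalence hypotheses by reducing to the two constituent equivalences: the operadic one from the first theorem and the group one from the Segal-group comparison. Since both the theory model structure and the Segal model structure are detected by the underlying spaces at each sort, I expect the unit and counit of the adjunction to be checkable blockwise, with the general rigidification machinery for algebras over multisorted theories then assembling these into the desired Quillen equivalence.

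The step I expect to be the main obstacle is exactly this blockwise-to-global assembly across the action operations. Unlike the fixed discrete group of the second theorem, here the group must be delooped and replaced by its own Segal model, and one must check that this replacement is compatible with the operad's Segal structure through the action: concretely, that the unit and counit are simultaneously weak equivalences in the $\del$-direction and the $\om$-direction, and that reducedness is preserved by the comparison functors. I would address this by establishing the equivalence with the group first held fixed, which is essentially the content already available from the first two theorems, and then letting the group vary through the Segal-group equivalence, using that the action operations are levelwise and hence disturb neither Segal condition.
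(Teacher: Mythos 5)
Your overall template---describe $\Tgop$, replace it by a simpler indexing category built from $\del$ and $\om$, localize the reduced diagram categories, and induce the adjunction by left Kan extension---is the paper's template. But there are two genuine gaps in how you fill it in. First, in the group direction the paper does not use ``Segal condition plus grouplikeness'': it uses \emph{Bousfield--Segal} groups, i.e.\ localization at the maps built from $\gamma^k\colon [1]\to[n]$, $0\mapsto 0$, $1\mapsto k+1$, following \cite{addinverse}. This choice is not cosmetic. It is what matches the localization \eqref{E:theorylocalize} of algebras over the theory of \emph{groups} by an explicit set of maps of representables (gr\,ouplikeness is not presented that way), and the fact that every $\gamma^k$ fixes the vertex $0$ is exactly what makes these maps compatible with the action constraint in the combined category, where the simplicial and dendroidal directions are tied together at the vertex $0$ and the root.

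Second, and more seriously, the combined indexing category cannot be ``assembled blockwise'': it is the category $\mathbf{\Delta\circlearrowright\Omega}$ of \cite{bergnerhackney}, not a product of $\del$ and $\om$ decorated with extra maps. An object $[n\circlearrowright R]$ corepresents pairs $(\alpha\colon [n]\to\mcc,\ \beta\colon R\to P)$ subject to $\alpha(0)=\mu(\beta(r))$, and correspondingly the objects of $\Tgop$ are the pairs $(F_n,\,F_n\times J(R))$, in which the free group acts on the operad part---the coproduct in the category of groups acting on operads intertwines the two sorts, so they never decouple into a group block and an operad block. Consequently your fallback strategy (``establish the equivalence with the group first held fixed, then let the group vary'') has no footing: holding the group fixed and \emph{discrete} is the second theorem, whose indexing category $G\times\om$ really is a product and is structurally different, while a fixed \emph{simplicial} group cannot be held fixed inside the discrete algebraic-theory framework at all, so the first two theorems do not supply the intermediate equivalence you invoke. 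The paper instead runs the Kan-extension argument once, for the single functor $[n\circlearrowright R]\mapsto (F_n,\,F_n\times J(R))$, after localizing at the combined Segal cores
\[ Sc[n\circlearrowright R] = \Bigl(\bigcup_{k=0}^{n-1}\gamma^k\,rep[1\circlearrowright\varnothing]\Bigr)\cup\Bigl(\bigcup_{v\in V(R)}\zeta^v\,rep[0\circlearrowright C_v]\Bigr), \]
which mix the Bousfield maps and the corolla inclusions in one localization; the proof then \emph{interleaves} (rather than concatenates) the Bousfield--Segal-group argument of \cite{addinverse} with the Segal-operad argument of the earlier sections. (A minor point: the paper's operads are single-colored, so $\Tgop$ has sorts $\mathbb{N}\cup\{-1\}$ and no sort of colors.)
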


After a background section, we establish some model category structures for reduced dendroidal spaces.  We then prove that those which are local with respect to a Segal condition are equivalent to simplicial operads, viewed as algebras over the theory of operads.  From there, we consider the two different theories for groups acting on operads and find simpler diagrams, as developed in \cite{bergnerhackney}, which we prove encodes the same structure.  The paper concludes with technical results which are used in earlier sections.

\section{Background}

In this section we give a brief review of some of the main objects of study in this paper, namely dendroidal sets and algebras over multi-sorted algebraic theories, as well as the model category framework in which we investigate them.

Recall that an \emph{operad} in simplicial sets is a sequence $\{P(k)\}_{k \geq 0}$ of simplicial sets, each with a right action of the symmetric group $\Sigma_k$, a
unit element $1 \in P(1)$, and operations
\[ P(k) \times P(j_1) \times \cdots \times P(j_k) \rightarrow P(j_1+ \cdots +j_k) \] satisfying
associativity, unit, and equivariance conditions \cite{gils}.  More generally, one can look at \emph{colored operads}, in which inputs and outputs for each operation have designated labels, and composition respects these labels \cite{bm-resolution}, \cite{bv}.
In fact, a colored operad in simplicial sets is the same as a symmetric simplicial \emph{multicategory}, in which morphisms have a single target but possibly multiple (or no) inputs.

We reserve the word ``operad" to mean a colored operad with a single color, i.e., an ordinary operad, and write $\oper$ for the category of operads. We write $\coper$ for the category of colored operads.

\subsection{Dendroidal sets}

Recall that $\del$ denotes the category of finite ordered sets and order-preserving maps between them.  There is a close relationship between simplicial sets, or functors $\delo \rightarrow \Set$, and categories.  We can think of the objects $[n]$ of $\del$ as categories with $n+1$ objects and a single morphism $i \rightarrow j$ for $i \leq j$.  The nerve of a category $\mathcal C$ is defined to be the simplicial set whose $n$-simplices are given by the set $\Hom([n], \mathcal C)$.

The idea behind dendroidal sets is to find a generalization of the category $\del$ suitable for working with colored operads rather than categories.  The objects of this category are given by certain kinds of trees.

By ``tree", we mean a non-planar graph with no cycles, where some edges may touch only one vertex.  A tree must have at least one such edge, specified as the \emph{root}; any others the tree might have are called \emph{leaves}. The data of the tree consists of both the graph and the choice of root.  As an example, for $n\geq 0$, the \emph{corolla} $C_n$ is the tree with one vertex and $n+1$ edges, one of which is specified as the root.

The root determines a direction to the tree, where the leaves are regarded as inputs and the root is regarded as the output.  Consequently, every vertex $v$ has \emph{incoming edges}, whose number is given by $|v|$, the \emph{valence} of the vertex, as well as one \emph{outgoing edge}, which is the one in the direction of the root.
For a tree $S$, the set of vertices $V(S)$ is actually a \emph{set with valences}, namely the set $V(S)$ together with a function $V(S) \rightarrow \N$.
We let $[n]$ denote the linear tree with $n$ vertices and $n+1$ edges.

A tree $S$ determines a free colored operad as follows.  The set of colors is just the set of edges $E(S)$, and we have one generator for each vertex $v\in V(S)$. Choosing an order for the incoming edges $e_1, \dots, e_n$, the generator $v$ is in $(e_1, \dots, e_n; e)$ where $e$ is the outgoing edge of $v$.
We call this colored operad $S$ as well; the isomorphism class does not depend on our choice of planar structure.
(This colored operad is called $\Omega(S)$ in \cite{mw}.)

Notice that the colored operad $S$ is actually quite small despite being free. The fact that each edge has a distinct color forces all of the generated compositions to come from contracting internal edges of $S$.

The category $\om$, defined as the full subcategory of the category of colored operads with objects the free colored operads generated by trees, is the desired analogue of $\del$ for operads.
As for $\del$, we assume that we are working in some chosen skeletal subcategory of $\om$; in particular, we always regard $\ob \om$ as a set.

Notice that if $f: R \to S$ is a morphism of $\om$, and $S$ is linear, then $R$ is linear as well. In particular, if we regard the simplicial indexing category $\del$ as a subcategory of $\om$, we have $\Hom_\om ([m], [n]) = \Hom_\del([m], [n])$. Denoting by $\eta$ the tree with one edge and no vertices, it follows that the overcategory $\om / \eta$ is equivalent to $\del$.

\begin{defn} A \emph{dendroidal set} is a functor $\omop \to \Set$. More generally, if $\mcc$ is a category then a \emph{dendroidal object in $\mcc$} is a  functor $\omop \to \mcc$.
If $X$ is a dendroidal set and $S$ is a tree, we will write $X_S$ for the evaluation of $X$ at $S$.
\end{defn}

We denote by $\dset$ the category of dendroidal sets.  For a tree $S$, we consider the representable dendroidal set $\Omega[S] = \Hom_\om (- , S)$.  In this paper, we are primarily interested in \emph{dendroidal spaces}, or functors $\omop \rightarrow \SSets$.  We denote by $\dsset$ the category of dendroidal spaces.  Notice that any simplicial set or dendroidal set can be regarded as a dendroidal space by taking it to be constant in the dendroidal or simplicial direction, respectively; in 
this way
we consider $\Omega[S]$ as a
discrete
dendroidal space.

\begin{defn}
The \emph{nerve of a colored operad $P$} is the dendroidal set given by
\[ \nerve(P)_S = \Hom_{\coper}(S, P). \]
\end{defn}
Notice that $\nerve(S) = \Omega[S]$.

\subsection{Model categories}

Throughout this paper we use the framework of model categories; standard references are \cite{hirschhorn}, \cite{hovey}, and the original \cite{quillen}.  We make use of the standard model structure on the category $\sset$ of simplicial sets \cite[I.11]{gj}.  In particular, we are concerned with model categories whose objects are simplicial presheaves on some diagram category $\mathcal C$, or functors $\mathcal C^{op} \rightarrow \sset$.  Such a category is denoted by $\sset^{\mathcal C^{op}}$.

For such a category, we always have the \emph{projective} model structure, where weak equivalences and fibrations are given by levelwise weak equivalences and fibrations of simplicial sets \cite[11.6.1]{hirschhorn}.  If $\mathcal C$ has the structure of a Reedy category, then $\sset^{\mathcal C^{op}}$ additionally has the \emph{Reedy} model structure, where the weak equivalences are defined levelwise \cite[15.3.4]{hirschhorn}.  For example, the category $\del$ is a Reedy category, so the category of simplicial spaces has the Reedy model structure.  The category $\om$ is not a Reedy category, but is a \emph{generalized Reedy category} in the sense of Berger and Moerdijk \cite{bm}, and $\sset^{\omop}$ has a corresponding generalized Reedy model structure \cite[1.6]{bm}.

We use several localized model structures in this paper. The (left) Bousfield localization of a model category $\M$ with respect to a class of maps $\class$ is a modified model structure $\mathcal{L}_\class \M$ on the underlying category of $\M$ which has the same class of cofibrations and the maps in $\class$ become weak equivalences.

To be more precise, we make use of the homotopy function complex $\maph(A,X)$ (see \cite[Ch.\ 17]{hirschhorn} or \cite{dwyerkan}).  When $A$ is cofibrant, $X$ is fibrant, and $\M$ is a simplicial model category, the homotopy function complex agrees with the usual mapping space.
An object $W$ is called \emph{$\class$-local} if it is fibrant in $\M$ and for every element $f\colon A \to B$ of $\class$ the induced map of homotopy function complexes $f^*\colon \maph(B,W) \to \maph(A,W)$ is a weak equivalence; the $\class$-local objects are precisely the fibrant objects of $\mathcal{L}_\class \M$.
A map $g\colon X\to Y$ in $\M$ is a \emph{$\class$-local equivalence} if for every $\class$-local object $W$ the induced map of homotopy function complexes $g^*\colon \maph(Y,W) \to \maph(X,W)$ is a weak equivalence; the $\class$-local equivalences are precisely the weak equivalences of $\mathcal{L}_\class\M$.
More details about Bousfield localization, including existence proofs, may be found in \cite[Ch.\ 3 and 4]{hirschhorn}.

\subsection{Algebraic theories}

In this section we recall the definition of an algebraic theory and describe the theory of operads.

\begin{defn}
Given a set $\mathbf S$, an $\mathbf S$-\emph{sorted algebraic theory} (or
\emph{multi-sorted theory}) $\mathcal T$ is a small category with
objects $T_{M}$ where $M = \left< m_1, \ldots
,m_n\right>$ for $m_i \in \mathbf S$ and $n \geq 0$ varying, and such
that each $T_{M}$ is equipped with an isomorphism
\begin{equation} T_{M} \cong \prod_{i=1}^n T_{m_i}. \label{E:theoryprod}\end{equation}
For a particular $M$, the entries $m_i$ can repeat,
but they are not ordered.
In other words, $M$ is a an
$n$-element subset with multiplicities.
There exists a terminal
object $T_0$ (corresponding to the empty subset of $\mathbf S$).
\end{defn}

Let $\mathcal C$ be a category with coproducts such that given any
element $m \in \mathbf S$, we have a forgetful functor
\[ \varphi_m : \mathcal C \rightarrow \Set \]
and its left adjoint, the free functor
\[ \lambda_m : \Set \rightarrow \mathcal C. \]
Given this structure, we can construct the
$\mathbf S$-sorted theory corresponding to the category $\mathcal C$.  Its objects are the finitely generated free objects; morphisms are the opposites of those in $\mathcal C$.

Details on the conditions required for $\mathcal C$ to have an associated multi-sorted theory, and an explanation of how the operations are developed in such a theory, can be found in \cite{multisort}.

\begin{example}
In the category of groups, consider the full subcategory of representatives of isomorphism classes of finitely generated free groups.  The opposite of this category is the theory of groups, denoted by $\mathcal T_G$.  Since the objects of $\mathcal T_G$ are obtained as finite products of the free group on one generator, the theory $\mathcal T_G$ is ``1-sorted" or an ``ordinary" algebraic theory.
\end{example}

\begin{example}{\cite[3.4]{multisort}} \label{operad}
The most important example of a multi-sorted theory for the purposes of this paper is the $\mathbb N$-sorted
theory of symmetric operads. Here we consider operads in the category of sets.

There is a notion of a free operad on $n$ generators in arities
$m_1, \ldots ,m_n$ \cite[ II.1.9]{mss}, \cite[2.3.6]{rezkop}.
Specifically, such a free operad has, for each $1 \leq i \leq n$,
a generator in $P(m_i)$.  Note that the values of $m_i$ can
repeat. For example, one can think of the free operad on $n$
generators, each of arity 1, as the free monoid on $n$ generators.

In the category of operads $\oper$, consider the full subcategory of representatives of
isomorphism classes of finitely generated free operads.  Each object in this
category, then, can be described as the free operad on $n$
generators of arities $m_1, \ldots ,m_n$ for some $n, m_1, \ldots ,m_n \geq 0$. The opposite of this category is the theory of
operads $\T_\oper$. Using the notation for multi-sorted
theories, we have that $T_m$ for $m \in \mathbb N$ is
the free operad on one generator at level $m$ and for
$M = \left< m_1, \ldots ,m_n\right>$, we have that
$T_{M}$ is the free operad on $n$ generators at levels
$m_1, \ldots m_n$.
\end{example}

\subsection{Algebras over multi-sorted theories}\label{SS:comparison}

Let $\T_\oper$ denote the multi-sorted theory of operads, as just described, and let $\star$ denote the trivial operad (which is initial in $\oper$ and terminal in $\T_\oper$).  Let $\sset$ denote the category of simplicial sets.  By $Alg^{\T_\oper}$ we denote the category of product-preserving functors $\T_\oper \rightarrow \sset$.  There is a model structure on $Alg^{\T_\oper}$ in which weak equivalences and fibrations are given levelwise \cite{badzioch}.

We want also to look at such functors in which products are preserved only up to homotopy, called \emph{homotopy} $\T_\oper$-\emph{algebras}.  By definition, such functors $W: \T_\oper \to \sset$ only satisfy the condition that $W_0$ be weakly contractible, but here we impose the additional condition that they be objects in the full subcategory $\sset_*^{\T_\oper}$ of $\sset^{\T_\oper}$, whose objects are the functors $W: \T_\oper \to \sset$ with $W(\star) = *$.

The following proposition holds more generally for other
algebraic theories, but we state it for our main example of interest; its proof is a modification of results found in \cite{badzioch} and \cite{multisort}.

\begin{prop} \cite[Proposition 2]{simpmoncorr}
There is a model category structure on $\sset_*^{\T_\oper}$, denoted $\el \sset_*^{\T_\oper}$, in which the fibrant objects are homotopy $\T_\oper$-algebras.  Furthermore, this model structure is Quillen equivalent to $Alg^{\T_\oper}$.
\end{prop}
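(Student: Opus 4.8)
The plan is to realize $\el\sset_*^{\T_\oper}$ as a left Bousfield localization and then to compare it with strict algebras through the strictification adjunction, following the strategy of \cite{badzioch} and its multi-sorted refinement in \cite{multisort}. First I would equip $\sset_*^{\T_\oper}$ with the (reduced) projective model structure coming from $\sset^{\T_\oper}$, in which weak equivalences and fibrations are levelwise; this is cofibrantly generated, simplicial, left proper, and cellular. For each object $T_M$ of $\T_\oper$, the product decomposition $T_M \cong \prod_{i=1}^n T_{m_i}$ of \eqref{E:theoryprod} supplies projections $T_M \to T_{m_i}$, and these assemble into a map of corepresentable functors
\[ \coprod_{i=1}^n \Hom_{\T_\oper}(T_{m_i}, -) \longrightarrow \Hom_{\T_\oper}(T_M, -), \]
the coproduct being formed in $\sset_*^{\T_\oper}$ so that the values at $\star$ are identified. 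Let $\class$ be the set of all such maps. Hirschhorn's existence theorem \cite{hirschhorn} then produces the localization $\mathcal{L}_\class\sset_*^{\T_\oper}$, which is the model structure $\el\sset_*^{\T_\oper}$ of the statement.

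Next I would identify the fibrant objects. Since corepresentables and their coproducts are projectively cofibrant, for fibrant $W$ we have $\maph(-,W) \simeq \map(-,W)$, and the (co)Yoneda lemma gives $\map(\Hom_{\T_\oper}(T_M,-), W) \cong W(T_M)$ together with $\map(\coprod_i \Hom_{\T_\oper}(T_{m_i},-), W) \cong \prod_i W(T_{m_i})$ (the reduced coproduct contributing a product because $W(\star)=*$). Hence $W$ is $\class$-local exactly when each projection-induced map $W(T_M) \to \prod_i W(T_{m_i})$ is a weak equivalence of simplicial sets, i.e.\ exactly when $W$ is a homotopy $\T_\oper$-algebra. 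This proves the first assertion.

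For the Quillen equivalence I would use the strictification adjunction $F \colon \sset_*^{\T_\oper} \rightleftarrows Alg^{\T_\oper} \colon U$, with $U$ the fully faithful inclusion of the full subcategory of product-preserving functors and $F$ its left adjoint. With the levelwise structures $(F,U)$ is a Quillen pair, since $U$ preserves levelwise fibrations and acyclic fibrations. Moreover $U$ carries fibrant objects to $\class$-local objects: a product-preserving functor sends the terminal $\star$ to $*$ and is in particular a homotopy $\T_\oper$-algebra. By Hirschhorn's criterion this is precisely what is needed for $(F,U)$ to remain a Quillen pair $\el\sset_*^{\T_\oper} \rightleftarrows Alg^{\T_\oper}$. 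Because $U$ is fully faithful the counit $FUY \to Y$ is an isomorphism, so the remaining content of the Quillen equivalence is that for every cofibrant $X$ the derived unit $X \to U\mathbb{L}F X$ is a $\class$-local equivalence.

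I expect this last point---the rigidification statement that every homotopy $\T_\oper$-algebra is levelwise weakly equivalent, via the unit, to a strict one---to be the main obstacle, and I would prove it by adapting Badzioch's argument. The plan is to check the derived unit levelwise at each $T_M$; using the product decomposition to reduce to the generating sorts $T_m$, one computes $\mathbb{L}F X$ through the explicit (bar-type) presentation of the free strict algebra and shows that the homotopy-product condition defining $\class$-locality forces the comparison maps to be weak equivalences. The multi-sorted and reduced ($W(\star)=*$) features introduce only bookkeeping beyond the single-sorted case of \cite{badzioch}, handled exactly as in the multi-sorted rigidification of \cite{multisort}; the only theory-specific input is that $\T_\oper$ is the particular $\N$-sorted theory of Example~\ref{operad}, and the argument is otherwise uniform in the theory.
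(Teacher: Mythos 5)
Your proposal is correct and takes essentially the same route as the paper: the structure $\el \sset_*^{\T_\oper}$ is obtained by localizing the levelwise (projective) model structure on reduced diagrams at precisely the projection-induced maps \eqref{E:theorylocalize}, with the identification of the fibrant objects and the rigidification/Quillen equivalence with $Alg^{\T_\oper}$ deferred to the arguments of \cite{badzioch}, \cite{multisort}, and \cite{simpmoncorr}, exactly as the paper does. The extra details you supply --- the Yoneda computation identifying the $\class$-local objects as homotopy $\T_\oper$-algebras, and the reduction of the equivalence to the derived unit condition via the fully faithful strictification adjunction --- are sound and consistent with how those references carry out the proof.
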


This model structure is obtained by
\begin{enumerate}
\item considering $\sset_*^{\T_\oper}$ with the model structure where the weak equivalences and fibrations are precisely the levelwise weak equivalences and fibrations, and then
\item localizing with respect to the set of maps
\begin{equation} \left\{ \coprod \Hom(T_{m_i}, -) \to \Hom(T_M, -) \right\}_M \label{E:theorylocalize}\end{equation}
which are induced from the projections in \eqref{E:theoryprod}.
\end{enumerate}

\section{Model structures on categories of dendroidal spaces}

Regarding the category $\dspace$ as the category of diagrams, $\sset^\omop$, we denote by $\dspace_f$ the projective model structure, and by $\dspace_R$ the generalized Reedy structure.  We begin by recalling the following characterization of the cofibrations in $\dspace_R$.

\begin{defn}{\cite[Proposition 1.5]{cm-ho}} \label{D:normal}
A monomorphism $f: X\to Y$ in $\dspace$ is \emph{normal} if for any tree $S$, the action of $\Aut(S)$ on $Y_S \setminus X_S$ is free.  A dendroidal space $X$ is \emph{normal} if $\varnothing \rightarrow X$ is normal.
\end{defn}

\begin{thm}{\cite{cm-ds}} \label{T:cm-modelstruct}
The cofibrations in $\dspace_R$ are the normal monomorphisms.
\end{thm}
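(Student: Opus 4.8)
The plan is to deduce the statement from the general description of cofibrations in a generalized Reedy model structure, as set up in Berger--Moerdijk \cite{bm}. Since $\om$ is a generalized Reedy category, a map $f\colon X \to Y$ in $\dspace_R = \sset^{\omop}$ is a cofibration precisely when, for every tree $S$, the relative latching map
\[ X_S \cup_{L_S X} L_S Y \longrightarrow Y_S \]
is a cofibration in the projective model structure on $\Aut(S)$-equivariant simplicial sets. First I would record the concrete form of the latter class: for a finite group $G$, a map of $G$-simplicial sets is a projective cofibration exactly when it is a monomorphism on whose complement (the simplices outside the image) $G$ acts freely, since the generating projective cofibrations are the maps $G\cdot \partial\Delta[n] \to G\cdot \Delta[n]$. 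Thus the task becomes to match this ``free action on the relative latching complement'' condition, imposed at every $S$, with the single condition that $\Aut(S)$ act freely on $Y_S \setminus X_S$ for every $S$.

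Next I would identify the relative latching complement. Using that $\om$ is an Eilenberg--Zilber category, so that latching maps are monomorphisms, the image of the relative latching map at $S$ consists of the simplices of $Y_S$ which are either dendroidally degenerate (lie in $L_S Y$) or come from $X$; hence its complement is exactly the set of \emph{nondegenerate} simplices of $Y_S$ not in the image of $X_S$. This complement is stable under $\Aut(S)$, because automorphisms preserve nondegeneracy. The forward implication is then immediate: if $f$ is a normal monomorphism, then $\Aut(S)$ acts freely on all of $Y_S \setminus X_S$, a fortiori on the relative latching complement, so every relative latching map is a projective cofibration and $f$ is a cofibration.

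The substance lies in the converse, and this is where I expect the main difficulty. Assuming every relative latching map is a projective cofibration, I want to promote freeness on nondegenerate new simplices, at every tree, to freeness on all new simplices. The tool is the Eilenberg--Zilber property of $\om$ \cite{bm}: each $y \in Y_S$ can be written as $y = \sigma^* z$ for a split epimorphism (codegeneracy) $\sigma\colon S \to R$ and a nondegenerate $z \in Y_R$, uniquely up to isomorphism. Given $g \in \Aut(S)$ with $g^* y = y$, one computes $(\sigma g)^* z = \sigma^* z$, so the uniqueness clause produces $\phi \in \Aut(R)$ with $\sigma g = \phi\sigma$ and $\phi^* z = z$. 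Since $y \notin X_S$ and $X$ is a subobject closed under the operators, $z$ is a nondegenerate element of $Y_R$ not in $X_R$; the assumed freeness at $R$ then forces $\phi = \id$, whence $\sigma g = \sigma$.

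It remains to verify the key cancellation property: a codegeneracy $\sigma$ in $\om$ satisfies $\sigma g = \sigma \Rightarrow g = \id$ for every automorphism $g$ of its source. I would argue this combinatorially: a codegeneracy is surjective on edge sets, and its nonsingleton fibers are directed chains of edges linked by unary vertices. An automorphism $g$ with $\sigma g = \sigma$ must preserve each fiber, and since it also preserves the root direction it fixes each such chain pointwise; as it fixes the singleton fibers automatically, $g$ fixes every edge of $S$ and hence is the identity. Granting this, $\sigma g = \sigma$ yields $g = \id$, so $\Aut(S)$ acts freely on $Y_S \setminus X_S$ and $f$ is a normal monomorphism. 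I regard the compatibility of the $\Aut(S)$-actions with the Eilenberg--Zilber decomposition, culminating in this cancellation lemma for codegeneracies, as the crux of the argument.
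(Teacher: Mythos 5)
The paper offers no proof of this theorem for you to be compared against: it is imported verbatim from Cisinski--Moerdijk \cite{cm-ds}. Judged on its own, your outline reconstructs what is essentially the standard argument behind the cited result: Berger--Moerdijk's characterization of Reedy cofibrations by relative latching maps, the identification of projective cofibrations in $\sset^{G}$ as monomorphisms with $G$ acting freely on the complement, the Eilenberg--Zilber decomposition identifying the relative latching complement with the nondegenerate simplices not coming from $X$, and the promotion of freeness from nondegenerate simplices to all simplices. You are also right that the cancellation property for degeneracies is the crux and genuinely needs a combinatorial proof: it is not among the generalized Reedy axioms of \cite{bm} (axiom (iv) there is the postcomposition statement $\theta\sigma=\sigma\Rightarrow\theta=\id$, and dualizability gives precomposition cancellation only for maps in $\om^{+}$), and your chain-fiber argument for it is correct, since automorphisms of a tree preserve distance to the root and the edges in a fiber of a degeneracy have pairwise distinct such distances.

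There is, however, one genuine omission, in the converse. By Definition \ref{D:normal}, a normal monomorphism is in particular a monomorphism, so ``cofibration $\Rightarrow$ normal mono'' requires proving that a Reedy cofibration is levelwise monic; you never do, and in fact you invoke it midway through the freeness argument (``$X$ is a subobject closed under the operators''). This step is not formal: one needs an induction on the number of vertices, using the EZ uniqueness clause, to see that if all relative latching maps are monic then so is each $f_S$ (roughly: injectivity of $f$ in lower degrees implies $f$ preserves nondegeneracy there, whence $L_S X \to L_S Y$ is monic, whence $X_S \to X_S \cup_{L_S X} L_S Y \to Y_S$ is a composite of monomorphisms). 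By contrast, the freeness half of your converse can be decoupled from this issue: if you replace ``$z \notin X_R$'' by ``$z$ is not in the image of $X_R$'', your argument works verbatim, because the image of a relative latching map is always the union of the image of $X_R$ with the degenerate simplices, no injectivity required. With the monomorphism induction supplied, the proof is complete.
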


The generating cofibrations of the model structure $\dspace_f$ can be shown to be normal, and the weak equivalences are the same in both model structures, so $\dspace_f$ is Quillen equivalent to $\dspace_R$.  Because we need a similar result for generalized Reedy categories other than $\omop$, we prove the following more general result.

\begin{prop}
Let $\mathcal C$ be a generalized Reedy category.
The identity functor induces a Quillen equivalence
\[ \sset_f^{\mathcal C} \rightleftarrows \sset_R^{\mathcal C}. \]
\end{prop}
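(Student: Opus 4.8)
The plan is to show that the identity functor $\sset_f^{\mathcal C} \to \sset_R^{\mathcal C}$ is a left Quillen functor; since the two model structures live on the same underlying category and, by construction, share the same (levelwise) weak equivalences, this is enough. Indeed, a left Quillen functor between two model structures with identical weak equivalences induces the identity functor on homotopy categories, which is trivially an equivalence, so such an adjunction is automatically a Quillen equivalence. Everything therefore reduces to verifying that projective cofibrations are generalized Reedy cofibrations and that projective trivial cofibrations are generalized Reedy trivial cofibrations.

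Because the projective model structure is cofibrantly generated, and a left adjoint preserves the pushouts, transfinite compositions, and retracts that build cofibrations and trivial cofibrations, it suffices to check this on the generating maps, which have the form $F_c(\partial\Delta[n] \to \Delta[n])$ and $F_c(\Lambda^k[n] \to \Delta[n])$. Here $F_c$ is the left adjoint to evaluation at an object $c$, so that $F_c(K)$ is the representable at $c$ tensored with the simplicial set $K$. Each generating trivial cofibration is in fact a levelwise trivial cofibration, since at each object $s$ it is the coproduct, indexed by $\Hom_{\mathcal C}(c,s)$, of copies of an anodyne map; in particular it is a weak equivalence in $\sset_R^{\mathcal C}$. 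Thus it will suffice to prove the single statement that each generating cofibration and each generating trivial cofibration is a generalized Reedy cofibration.

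To verify this I would invoke the Berger--Moerdijk characterization of generalized Reedy cofibrations: a map is a cofibration precisely when, for each object $s$, its relative latching map is a monomorphism on whose complementary simplices $\Aut(s)$ acts freely. For $F_c(K)$ one has $F_c(K)(s) = \Hom_{\mathcal C}(c,s)\times K$, with $\Aut(s)$ acting by postcomposition on the first factor and trivially on $K$. Analyzing the latching object by means of the unique factorization of morphisms of $\mathcal C$ as a degree-lowering map followed by a degree-raising map, one finds that the simplices in the complement of the relative latching map at $s$ are indexed by those $\phi\colon c\to s$ whose degree-raising part is invertible---equivalently, by the morphisms lying in $\mathcal C^-$---tensored with the nondegenerate simplices present in $\Delta[n]$ but missing from $\partial\Delta[n]$ (respectively from $\Lambda^k[n]$). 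Since $\Aut(s)$ acts trivially on the simplicial coordinate, freeness reduces to the assertion that $\sigma\phi=\phi$ forces $\sigma=\id$ whenever $\sigma\in\Aut(s)$ and $\phi\in\mathcal C^-$, which is exactly the generalized Reedy axiom that an isomorphism fixing a degree-lowering morphism must be the identity.

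The main obstacle is precisely this last verification: correctly identifying which simplices survive into the complement of the relative latching map and recognizing that their indexing set consists of degree-lowering morphisms, so that the needed freeness is supplied by the generalized Reedy axioms themselves rather than by an extra hypothesis. Once this is in place, the identical computation handles the generating trivial cofibrations (only the simplicial coordinate changes), and combining the two cases shows the identity is a left Quillen functor, hence, by the observation above, a Quillen equivalence.
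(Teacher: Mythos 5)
Your proof is correct, but it establishes the Quillen pair from the side opposite to the paper's. The paper verifies the right Quillen condition: it shows that every generalized Reedy fibration is a levelwise fibration, by observing that Hirschhorn's matching-object arguments (15.3.9 and 15.3.10) go through when carried out equivariantly in $\sset^{\Aut(\alpha)}$; that argument needs no cofibrant generation and treats all maps at once. You instead verify the left Quillen condition on generators, computing the relative latching map of $\Hom_{\mathcal C}(c,-)\times(\partial\Delta[n]\to\Delta[n])$. The essential points in your route are that the latching map of the representable is a monomorphism whose complement at $s$ is exactly $\mathcal C^-(c,s)$ (this is where unique factorization enters, and it deserves to be written out in full), and that the $\Aut(s)$-action there is free by the Berger--Moerdijk axiom that $\sigma\phi=\phi$ with $\phi\in\mathcal C^-$ and $\sigma$ invertible forces $\sigma=\id$. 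Your variance conventions matter here and you have them right: for covariant diagrams with postcomposition actions this is precisely axiom (iv) of a generalized Reedy category, whereas the precomposition version would require the dualizability axiom, which is not assumed. Both proofs conclude identically, since a Quillen pair between model structures sharing the same weak equivalences is automatically a Quillen equivalence. What your route buys is explicitness: it is in effect the generalization, to an arbitrary generalized Reedy $\mathcal C$, of the sketch the paper gives just before the proposition for $\mathcal C=\om^{op}$ (``the generating cofibrations of $\dspace_f$ can be shown to be normal''), while the paper's route is shorter given Hirschhorn's results. One harmless slip: the complement of the relative latching map is $\mathcal C^-(c,s)\times(\Delta[n]\setminus\partial\Delta[n])$, which contains degenerate simplices (degeneracies of the top cell), not only nondegenerate ones; since $\Aut(s)$ acts trivially in the simplicial direction, freeness is unaffected.
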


\begin{proof}
It suffices to prove that every Reedy fibration is a levelwise fibration.  For ordinary Reedy categories, the proof is given in \cite[15.3.11, 15.6.3]{hirschhorn}.  We modify the proof for the generalized case.

Recall from \cite{bm} that $f \colon X \rightarrow Y$ is a generalized Reedy fibration if, for every object $\alpha$ in $\mathcal C$, the relative matching map $X_\alpha \rightarrow M_\alpha \times_{M_\alpha(Y)} Y_\alpha$ is a fibration in $\sset^{\Aut(\alpha)}$. In other words, this map is a fibration in $\sset$ which is $\Aut(\alpha)$-equivariant.

So, suppose that $f \colon X \rightarrow Y$ is a generalized Reedy fibration.  Notice that the argument in \cite[15.3.9]{hirschhorn} still holds in this setting, working in $\sset^{\Aut(\alpha)}$ rather than $\sset$, establishing that $M_\alpha(X) \rightarrow M_\alpha(Y)$ is a fibration in $\sset^{\Aut(\alpha)}$.  Similarly, the proof of \cite[15.3.10]{hirschhorn} gives us that $X_\alpha \rightarrow Y_\alpha$ is a fibration in $\sset^{\Aut(\alpha)}$, which is by definition a fibration in $\sset$.
\end{proof}

As with diagrams given by $\T_\oper$, we can consider the full subcategory $\dspace_* \hookrightarrow \dspace$ consisting of those functors $X: \om^{op} \to \sset$ such that $X_\eta$ is a point. We call such $X$ \emph{reduced dendroidal spaces}.

\begin{rem}\label{R:basepoint}
If $X$ is reduced, then $X_{[n]}$ has a basepoint given by the unique map $[n] \to \eta=[0]$.
\end{rem}

\begin{rem}\label{R:reducednerve}
If $P$ is a colored operad, then $\nerve(P)$ is reduced if and only if $P$ has a single color, which is our primary case of interest.
\end{rem}

\begin{defn}
Define the \emph{reduction} of a dendroidal space by $X\mapsto X_\ast$, where $X_\ast$ is the pushout
\[ \xymatrix{ X_\eta \times \Omega[\eta] \ar@{->}[r] \ar@{->}[d] & \Omega[\eta] \ar@{->}[d] \\
X \ar@{->}[r] & X_* }\]
Here the top map is the projection $X_\eta \times \Omega[\eta]_S \to \Omega[\eta]_S$ for each tree $S$,
and the map on the left is the adjoint of the identity map $X_\eta \overset{\id}\to X_\eta = \Hom_\dspace(\Omega[\eta], X)$.
\end{defn}

In fact, the reduction functor $r\colon \dspace \to \dspace_*$ is left adjoint to the inclusion functor $U\colon \dspace_* \to \dspace$.  We use both notations, $X_*$ and $r(X)$ to denote the reduction of a dendroidal space $X$.  If $R$ is any nonlinear tree, then $r(X)_R = X_R$ since $\Omega[\eta]_R = \varnothing$.

Notice that the limit of a diagram $\mathcal{J} \to \dspace_*$ is actually in $\dspace_*$, not just in $\dspace$, since $U$ is a right adjoint. As one may expect, $U$ does not preserve colimits in general (although it happens to preserve coequalizers).  However, we do have the following result.

\begin{prop}
The category $\dspace_*$ is cocomplete.
\end{prop}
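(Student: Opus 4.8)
The plan is to exhibit $\dspace_*$ as a reflective subcategory of the cocomplete category $\dspace$ and then to invoke the standard fact that a reflective subcategory of a cocomplete category is itself cocomplete. Since $\dspace = \sset^{\omop}$ is a category of simplicial presheaves, it is complete and cocomplete, with all (co)limits computed objectwise. The inclusion $U \colon \dspace_* \to \dspace$ is the inclusion of a full subcategory, hence fully faithful, and by the discussion preceding the statement it admits the reduction functor $r$ as a left adjoint, so $r \dashv U$ exhibits $\dspace_*$ as reflective in $\dspace$.

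Given a small diagram $F \colon \mathcal{J} \to \dspace_*$, I would form the colimit $\colim_{\mathcal{J}} U F$ in $\dspace$ and then reflect, setting $C = r(\colim_{\mathcal{J}} U F)$. To check that $C$ is a colimit of $F$ in $\dspace_*$, I would verify its universal property directly: for any $Y \in \dspace_*$, the adjunction $r \dashv U$, the universal property of the colimit in $\dspace$, and the full faithfulness of $U$ combine into natural isomorphisms
\[ \Hom_{\dspace_*}(C, Y) \cong \Hom_{\dspace}(\colim_{\mathcal{J}} U F, U Y) \cong \lim_{\mathcal{J}} \Hom_{\dspace}(U F, U Y) \cong \lim_{\mathcal{J}} \Hom_{\dspace_*}(F, Y), \]
which is exactly the universal property identifying $C$ as $\colim_{\mathcal{J}} F$ in $\dspace_*$.

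The point to keep in mind — and the reason one cannot simply take colimits in $\dspace$ — is that the inclusion $U$ does not preserve colimits, so the ambient colimit $\colim_{\mathcal{J}} U F$ need not be reduced: at the level of $\eta$ it computes $\colim_{\mathcal{J}} \ast$, which fails to be a point as soon as $\mathcal{J}$ is disconnected (for instance for coproducts). Applying the reflector $r$ repairs precisely this defect, collapsing the $\eta$-level back to a point while leaving the values on nonlinear trees unchanged, in accordance with the earlier observation that $r(X)_R = X_R$ for nonlinear $R$. I expect the only real subtlety to be the careful use of the full faithfulness of $U$, equivalently the fact that the counit $r U \to \id$ is an isomorphism, since this is what makes the last isomorphism above go through; the remaining steps are formal consequences of the reflective adjunction.
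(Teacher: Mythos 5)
Your proof is correct, but it takes a genuinely different route from the paper's. You use the reflective-subcategory argument: since the paper has already asserted the adjunction $r \dashv U$ with $U$ the inclusion of a full subcategory, $\dspace_*$ is reflective in the cocomplete presheaf category $\dspace$, and the standard formula $\colim_{\mathcal J} F \cong r(\colim_{\mathcal J} UF)$ together with your chain of natural isomorphisms (adjunction, universal property in $\dspace$, full faithfulness of $U$) settles cocompleteness in one stroke. The paper instead argues constructively: it builds coproducts by hand, taking the objectwise coproduct at nonlinear trees and the wedge $\bigvee_a X^a_R$ at linear trees (which is exactly what your $r(\coprod U X^a)$ collapses to, since reduction identifies the basepoints coming from $\eta$), observes that coequalizers of reduced objects computed in $\dspace$ are again reduced, and then invokes the dual of Mac Lane's result that a category with all coproducts and coequalizers is cocomplete. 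Your approach is shorter, more formal, and handles all colimits uniformly; the paper's approach has the advantage of producing an explicit description of coproducts in $\dspace_*$, which is quietly used later in the paper, for instance in the identifications $Sc[S]_* \cong \coprod_{v \in V(S)} \Omega[C_v]_*$ and in the filtration argument for Proposition \ref{P:filtration}. The only dependency worth flagging is that your argument leans entirely on the adjunction $r \dashv U$, which the paper states without proof; since that assertion precedes the proposition, this is a legitimate foundation, and your side remarks (failure of $U$ to preserve coproducts, the counit $rU \to \id$ being an isomorphism) are all accurate.
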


\begin{proof}
If $\set{X^a}_{a \in \mathcal{A}}$ is a set of objects of $\dspace_*$, then the coproduct is defined by
\[ \left( \coprod_{a \in \mathcal{A}} X^a \right)_R = \begin{cases} \coprod_{a \in \mathcal{A}} X^a_R & \text{if $R$ is a nonlinear tree} \\
\bigvee_{a \in \mathcal{A}} X^a_R &\text{if $R$ is a linear tree,} \end{cases} \]
together with the evident structure maps. Coequalizers are created in $\dspace$. Since $\dspace_*$ has all coproducts and coequalizers, it is cocomplete by the dual of \cite[V.2 Cor.\ 2]{maclane}.
\end{proof}

As noted in Remark~\ref{R:basepoint},
if $X$ is a reduced dendroidal space and $S=[m]$ is a linear tree, then $X_S$ has a natural basepoint.
There are no maps $S\to \eta$ when $S$ is nonlinear, so $X_S$ does not have a natural basepoint, and, in fact, may be empty.

\begin{defn}
Suppose that $X$ is a reduced dendroidal space and $K$ is a simplicial set. We define a dendroidal space $X \otimes K$, regarded as a diagram $\omop \times \delo \rightarrow \Set$, by
\[ (X \otimes K)_{S,n} =
\begin{cases} X_{S,n} \times K_n & \text{if $S$ is nonlinear} \\
X_{S,n} \wedge (K_n)_+ & \text{if $S=[m]$ is linear.}
\end{cases} \]
\end{defn}

\begin{prop}\label{P:identifyrtimes}
Suppose that $X\in \dspace_*$ and $K\in \sset$, each regarded as a dendroidal space, and let $Z = X\times K$. Then
\[ Z_* = X\otimes K. \] In other words, $X\otimes K = r(U(X)\otimes K)$.
\end{prop}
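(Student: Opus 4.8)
The plan is to compute the reduction $Z_* = r(Z)$ directly from its defining pushout square, evaluated one tree and one level at a time. Since $\dspace = \sset^{\omop}$ is a diagram category, colimits — and in particular the pushout defining the reduction — are formed objectwise, so for each tree $S$ and each $n \geq 0$ the set $(Z_*)_{S,n}$ is the pushout in $\Set$ of
\[ \Omega[\eta]_{S,n} \longleftarrow (Z_\eta \times \Omega[\eta])_{S,n} \longrightarrow Z_{S,n}. \]
I would identify this pushout with $(X \otimes K)_{S,n}$, treating the nonlinear and linear trees separately, and then check that the identifications assemble into an isomorphism of dendroidal spaces.

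For a nonlinear tree $R$ there is no morphism $R \to \eta$ in $\om$, so $\Omega[\eta]_R = \varnothing$ and likewise $(Z_\eta \times \Omega[\eta])_R = \varnothing$. The pushout therefore collapses to $Z_R$ itself, giving $(Z_*)_R = Z_R = X_R \times K$; this is exactly the observation, recorded just after the definition of reduction, that $r(Z)_R = Z_R$ on nonlinear trees, and it agrees with $(X \otimes K)_R$ by definition.

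The substance lies in the linear case $S = [m]$. Here I would use that there is a unique morphism $[m] \to \eta$, so that $\Omega[\eta]_{[m],n} = *$, and that $X$ is reduced, so $X_{\eta,n} = *$ and hence $Z_{\eta,n} = X_{\eta,n} \times K_n = K_n$. The top map of the square is the projection, which at $([m],n)$ is simply $K_n \to *$. The left-hand map is the adjoint of $\id_{Z_\eta}$ under the isomorphism $\Hom_\dspace(\Omega[\eta], Z) \cong Z_\eta$; concretely it sends a pair $(z,\phi) \in (Z_\eta)_n \times \Omega[\eta]_{S,n}$ to $\phi^*(z) \in Z_{S,n}$, where $\phi^* \colon Z_\eta \to Z_S$ is the induced structure map. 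At $S = [m]$, with $\phi$ the unique map $[m] \to \eta$ and using that $K$ is constant in the dendroidal direction, this becomes the map $K_n \to X_{[m],n} \times K_n$ given by $k \mapsto (*_{[m],n}, k)$, where $*_{[m],n}$ denotes the basepoint of $X_{[m],n}$ from Remark~\ref{R:basepoint}. The pushout of $* \leftarrow K_n \to X_{[m],n} \times K_n$ then identifies the entire subset $\{*_{[m],n}\} \times K_n$ to a single point, and the resulting pointed set is precisely the smash product $X_{[m],n} \wedge (K_n)_+$, matching $(X \otimes K)_{[m],n}$.

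It remains to verify that these levelwise identifications are natural in both $S$ and $n$, so that they constitute an isomorphism $Z_* \cong X \otimes K$ of dendroidal spaces. This is routine: the structure maps of $Z_*$ are the unique maps induced through the universal property of the pushout from those of $Z = X \times K$ and $\Omega[\eta]$, and under the descriptions above they carry over to exactly the structure maps defining $X \otimes K$. I expect the only real obstacle to be the bookkeeping in the linear case — pinning down the adjoint map on the left of the square and recognizing that collapsing $\{*_{[m],n}\} \times K_n$ produces the smash product $X_{[m],n} \wedge (K_n)_+$ rather than a product or a wedge. Once this is settled, the equivalent formulation $X \otimes K = r(U(X) \otimes K)$ records the same result, reading $U(X) \otimes K$ as the product $Z = U(X) \times K$ computed in $\dspace$.
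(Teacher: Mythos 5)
Your proof is correct, but it identifies the pushout by a different mechanism than the paper does. You compute the pushout objectwise, using that colimits in the presheaf category $\sset^{\omop}$ are formed levelwise, and then recognize at each linear tree $[m]$ and level $n$ that collapsing the slice $\{*_{[m],n}\} \times K_n$ inside $X_{[m],n} \times K_n$ yields the smash product $X_{[m],n} \wedge (K_n)_+$ (your description of the left-hand map as $k \mapsto (*_{[m],n},k)$, via the basepoint of Remark~\ref{R:basepoint} and the constancy of $K$ in the dendroidal direction, is exactly right, and the identification also holds in the degenerate case $K_n = \varnothing$, where both sides are a point); you must then check that these levelwise identifications are natural in $S$ and $n$, which you correctly dismiss as routine since the structure maps of $Z_*$ are themselves induced objectwise and all structure maps of $X$ between linear levels preserve basepoints, there being a unique map to $\eta$. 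The paper instead verifies the universal property directly: given $f\colon \Omega[\eta] \to Y$ and $g\colon Z \to Y$ agreeing on $Z_\eta \times \Omega[\eta]$, it writes down the unique extension $X \otimes K \to Y$ by $x \wedge k \mapsto g(x,k)$ and $x \wedge * \mapsto f(*)$, and checks well-definedness --- which is your collapse-to-smash observation in disguise, but packaged so that naturality comes for free, the induced map being a map of dendroidal spaces by construction. What your route buys is an explicit levelwise formula for $Z_*$; what the paper's buys is avoiding the (admittedly routine) naturality bookkeeping. Both arguments hinge on the same two facts: $\Omega[\eta]_R = \varnothing$ for nonlinear $R$, so nothing changes there, and at linear trees the attaching map hits precisely the basepoint slice $\{*\} \times K_n$.
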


\begin{proof}
Recall that $Z_*$ is defined as the pushout
\[ \xymatrix{
Z_\eta \times \Omega[\eta] \ar@{->}[r] \ar@{->}[d] & \Omega[\eta] \ar@{->}[d] \\
Z \ar@{->}[r] & Z_*
}\]
in $\dspace$. We construct a map $Z_* \to X\otimes K$; there is already a map $Z \to X\otimes K$.
Note that since $\Omega[\eta]_R = \varnothing$ if $R$ is nonlinear, in this case there is no change.

Suppose that $Y\in \dspace$ and we have maps
\begin{align*} f: \Omega[\eta] &\to Y \\
g: Z &\to Y
\end{align*}
which agree on $Z_\eta \times \Omega[\eta]$. The maps $f$ and $g$ determine a map $X \otimes K \to Y$ as follows. At a nonlinear tree $R$, $(X\otimes K)_R = Z_R$, so the map is just defined by $g$. If $R=[m]$ is a linear tree, then
\[ (X \otimes K)_{[m]} = X_{[m]} \wedge K_+. \]
Define the map \[ X_{[m]} \wedge K_+ \to Y_{[m]} \]
by
\begin{align*}
x \wedge k &\mapsto g(x,k) \\
x \wedge * &\mapsto f(*).
\end{align*}
We need to see that the top assignment is well-defined, i.e. $*\wedge k \mapsto g(*,k) = f(*)$, which it is.

Thus we have a map $X\otimes K \to Y$ extending $f$ and $g$, which is the only possibility, so $X\otimes K$ is precisely $Z_*$.
\end{proof}

\begin{prop}\label{P:projmodel}
There is a model category structure on $\dspace_*$ in which the fibrations and weak equivalences are defined levelwise.
\end{prop}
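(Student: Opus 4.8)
The plan is to obtain this model structure by \emph{lifting} (transferring) the projective structure $\dspace_f$ along the reflection adjunction $r \colon \dspace \rightleftarrows \dspace_* \colon U$, where $r$ is the reduction functor (a left adjoint) and $U$ is the inclusion (its right adjoint). Since $U$ is the inclusion of a full subcategory, a map $f$ of $\dspace_*$ is a levelwise weak equivalence (respectively levelwise fibration) exactly when $U(f)$ is a weak equivalence (respectively fibration) in $\dspace_f$. Thus the model structure we want is precisely the one \emph{right-induced} from $\dspace_f$ by $U$, and I would produce it by verifying the hypotheses of the standard lifting theorem \cite[Theorem 11.3.2]{hirschhorn}. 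The generating cofibrations and generating acyclic cofibrations of the lifted structure will be $r(I)$ and $r(J)$, where $I = \{ \Omega[S] \times (\partial\Delta[n] \to \Delta[n]) \}$ and $J = \{ \Omega[S] \times (\Lambda^k[n] \to \Delta[n]) \}$ are the generating (acyclic) cofibrations of the cofibrantly generated category $\dspace_f$.

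The formal hypotheses are routine. The category $\dspace_*$ is complete, since limits of reduced diagrams are computed levelwise and remain reduced (as the right adjoint $U$ preserves them), and it is cocomplete by the preceding proposition. Smallness of the domains of $r(I)$ and $r(J)$ holds because $\dspace_*$, being a reflective localization of the presheaf category $\dspace$, is locally presentable, so every object is small. A fibrant replacement functor is then produced by the small object argument applied to $r(J)$: an object $Y$ is fibrant in the right-induced structure precisely when $U(Y)$ is projectively fibrant, and by adjunction injectivity against $r(J)$ corresponds exactly to injectivity of $U(Y)$ against $J$.

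The main obstacle is the \emph{acyclicity} condition, namely that every lifted acyclic cofibration is a levelwise weak equivalence; this is the only step where the interaction between reduction and the weak equivalences must be controlled. I would dispatch it using the path-object argument of Schwede and Shipley: given the fibrant replacement functor above, it suffices to equip every fibrant object $W$ of $\dspace_*$ with a functorial path object. Take the levelwise cotensor $W^{\Delta[1]}$, so that $(W^{\Delta[1]})_S = (W_S)^{\Delta[1]}$. This is again reduced, since $(W^{\Delta[1]})_\eta = (W_\eta)^{\Delta[1]} = \ast^{\Delta[1]} = \ast$, and it is formed as a levelwise limit, hence lies in $\dspace_*$. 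For fibrant (and therefore levelwise fibrant) $W$, the factorization of the diagonal $W \to W^{\Delta[1]} \to W \times W$ exhibits a levelwise weak equivalence followed by a levelwise fibration, which is exactly a path object in the right-induced structure. The key point, and the place where reducedness could a priori fail, is that cotensoring with $\Delta[1]$ fixes the value at $\eta$, so no reduction is lost.

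Finally, I would note that the acyclicity condition can alternatively be checked by hand, and that this makes the essential phenomenon transparent. Each pushout of a map in $r(J)$ is the reduction of a cell attached in $\dspace$, and $r$ alters the value at a linear tree $[m]$ only by forming the pushout $W_{[m]} \cup_{W_\eta} \ast$. The structure map $W_\eta \to W_{[m]}$ induced by the unique surjection $[m] \to \eta$ is a split monomorphism, since any section of $[m] \to \eta$ in $\om$ splits it, and $W_\eta$ is contractible because the attached cells are levelwise weak equivalences and the domain is reduced. Hence left properness of $\sset$ shows that $W_{[m]} \to W_{[m]} \cup_{W_\eta} \ast$ is a weak equivalence. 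Either route completes the verification, and the lifting theorem then yields the model structure.
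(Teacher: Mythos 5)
Your framing---transferring $\dspace_f$ along the adjunction $r \colon \dspace \rightleftarrows \dspace_* \colon U$ via \cite[11.3.2]{hirschhorn}---is sound and is, in substance, the paper's own proof in different packaging: the paper applies the recognition theorem \cite[11.3.1]{hirschhorn} directly to $\dspace_*$, with generating sets $I_f$ and $J_f$ obtained by reducing the projective generators, and these coincide with your $r(I)$ and $r(J)$, since the reduction of a product only depends on the reduction of the dendroidal factor (cf.\ Proposition~\ref{P:identifyrtimes}, which gives $r(\Omega[S]\times K) \cong \Omega[S]_*\otimes K$). Your bicompleteness, smallness, and fibrancy-detection points are all fine. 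The genuine gap is that your \emph{main} verification of the acyclicity condition is circular. The path object argument of Quillen/Schwede--Shipley requires a fibrant replacement functor in the strong sense: a natural map $X \to RX$ with $RX$ fibrant which is already known to be a \emph{weak equivalence}, constructed by means independent of the structure being transferred. Your $R$ is built by the small object argument on $r(J)$, so $X \to RX$ is merely a relative $r(J)$-cell complex---and the statement that relative $r(J)$-cell complexes are levelwise weak equivalences is precisely the acyclicity condition you are trying to prove. Tracing the argument: for a relative $r(J)$-cell complex $j\colon A \to B$ one lifts to get $q\colon B \to RA$ with $qj = \iota_A$, and the path object on $RB$ yields a right homotopy between $\iota_B$ and $R(j)q$, so that $\iota_B$ is a weak equivalence if and only if $R(j)q$ is; but since no $\iota_X$ is known to be a weak equivalence, no instance is ever grounded and the two-out-of-six bookkeeping that would conclude $j$ is a weak equivalence never starts.

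The repair is easy, and your own fallback essentially contains it. Either supply an honest fibrant replacement, e.g.\ $R = \operatorname{Ex}^\infty$ applied levelwise, which lands in $\dspace_*$ because $\operatorname{Ex}^\infty(*) = *$, is levelwise fibrant, and comes with a natural levelwise weak equivalence $X \to RX$; then your cotensor path object $W^{\Delta[1]}$ (which is correctly shown to stay reduced) closes the argument. Or, better, promote your ``by hand'' paragraph from an afterthought to the actual proof: it is the non-circular route and is essentially what the paper does when it asserts that $J_f$-cofibrations are weak equivalences ``using the model structure on $\sset$.'' Concretely, maps in $r(J)$ are levelwise anodyne (at a nonlinear tree they are coproducts of horn inclusions; at a linear tree they are such coproducts with a disjoint basepoint), and pushouts and transfinite compositions of diagrams of reduced objects are created levelwise in $\dspace$, because connected colimits of reduced objects are again reduced; hence relative $r(J)$-cell complexes are levelwise anodyne extensions. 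Your version of this via left properness and collapsing the contractible subspace $W_\eta \subseteq W_{[m]}$ is correct for a single cell attachment (the split-monomorphism and contractibility claims both check out), but as stated it covers only one pushout of one generating map; it needs the routine extension to coproducts of cells and transfinite compositions before \cite[11.3.2]{hirschhorn} can be invoked.
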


We write $\dspace_{*,f}$ for this model structure.

\begin{proof}
We use the conditions of \cite[11.3.1]{hirschhorn} to establish this model structure.  We have proved that the category $\dspace_\ast$ is complete and cocomplete, and the two-out-of-three and retract axioms follow as usual.

To obtain sets of generating cofibrations and acyclic cofibrations, we apply an appropriate method of reduction to the generating sets for the projective model structure on $\sset^{\om^{op}}$, as follows.  Since fibrations are levelwise in the projective structure, generating cofibrations can be taken to be maps of the form
\[ \Omega [S] \times \partial \Delta [n] \rightarrow \Omega [S] \times  \Delta [n] \]
where $n \geq 0$ and $S$ is an object of $\om$.  Similarly, generating acyclic cofibrations can be taken to be maps of the form
\[ \Omega[S] \times V[n,k]   \rightarrow \Omega [S] \times  \Delta [n] \] where $n \geq 1$, $0 \leq k \leq n$, $S$ is an object of $\om$, and $V[n,k]$ is the $k$-horn of $\Delta[n]$.
Now, define the set of generating cofibrations
\[ I_f \colon = \{ \Omega [S]_* \otimes \partial \Delta [n] \rightarrow \Omega [S]_* \otimes \Delta [n] \mid n \geq 0, S \text{ in } \om\} \] and generating acyclic cofibrations
\[ J_f \colon = \{\Omega [S]_* \otimes V[n,k] \rightarrow \Omega [S]_* \otimes \Delta [n] \mid n \geq 1, 0 \leq k \leq n, S \text{ in } \om\} \] which can be seen to satisfy the small object argument, satisfying condition (1).

By definition, the $I_f$-injectives are precisely the acyclic fibrations, and the $J_f$-injectives are the fibrations, so conditions (3) and (4)(b) are satisfied.  Furthermore, the $J_f$-cofibrations are $I_f$-cofibrations and weak equivalences, using the model structure on $\sset$, establishing condition (2).
\end{proof}

We recall the following lemma about normal monomorphisms in the generalized Reedy structure.

\begin{lem} \label{L:subthings}{\cite[1.8]{cm-ho}}
If $R$ is normal then any monomorphism $S\to R$ is normal.
\end{lem}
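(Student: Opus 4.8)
The plan is to reduce the statement to the elementary observation that a free group action restricts to a free action on any invariant subset, once the two relevant definitions are unwound. First I would fix an arbitrary tree $T$ and compare the two $\Aut(T)$-actions that are in play. By Definition~\ref{D:normal}, the hypothesis that $R$ is normal means exactly that $\varnothing \to R$ is normal, so $\Aut(T)$ acts freely on $R_T \setminus \varnothing_T = R_T$; in other words, $\Aut(T)$ acts freely on the entirety of $R_T$. The goal, again by Definition~\ref{D:normal} applied to $f$, is to show that $\Aut(T)$ acts freely on $R_T \setminus S_T$.

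Next I would record two structural facts about the monomorphism $f\colon S\to R$. Since $\dspace = \sset^{\omop}$ is a functor category, a monomorphism is precisely an objectwise monomorphism, so the component $f_T\colon S_T \to R_T$ is an injection of simplicial sets; identifying $S_T$ with its image lets us regard $S_T$ as a sub-simplicial set of $R_T$. Moreover, because $f$ is a natural transformation, $f_T$ commutes with the map induced by every automorphism $\sigma \in \Aut(T)$ (viewed as a morphism $T\to T$ in $\om$), so $f_T$ is $\Aut(T)$-equivariant. Consequently $S_T$ is an $\Aut(T)$-invariant subset of $R_T$, and hence so is its complement $R_T \setminus S_T$.

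With these facts in hand the conclusion is immediate: the $\Aut(T)$-action on $R_T\setminus S_T$ is the restriction of the free action on $R_T$ to an invariant subset, and any point of the complement is in particular a point of $R_T$, whose stabilizer is already trivial; thus the restricted action is free. Since $T$ was arbitrary, $f$ is normal. This reduces the whole argument to the equivariance of $f_T$ together with the standard restriction property of free actions.

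The only point requiring any care — and the closest thing here to an obstacle — is the interpretation of $R_T\setminus S_T$: it is generally not a sub-simplicial set of $R_T$, since complements of sub-simplicial sets need not be closed under degeneracies. This causes no difficulty, however, because Definition~\ref{D:normal} asks only for freeness of the $\Aut(T)$-action on $R_T\setminus S_T$ as a set equipped with its levelwise $\Aut(T)$-action by simplicial automorphisms. I would therefore simply note that freeness is checked in each simplicial degree and that the $\Aut(T)$-action preserves the complement degreewise, so the elementary argument above applies verbatim.
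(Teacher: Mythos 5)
Your proof is correct and is exactly the intended argument: the paper gives no proof of this lemma itself, deferring to \cite[1.8]{cm-ho}, and with Definition~\ref{D:normal} in hand the statement reduces, just as you say, to the observation that $R$ normal means $\Aut(T)$ acts freely on all of $R_T = R_T\setminus\varnothing_T$, while naturality of the monomorphism makes $S_T$ (identified with its image) an $\Aut(T)$-invariant subset, so the action on the complement $R_T\setminus S_T$ is a restriction of a free action and hence free. Your closing remark --- that $R_T\setminus S_T$ is generally not a sub-simplicial set but that freeness is checked degreewise on the levelwise-invariant complement --- is the right point of care and matches how Definition~\ref{D:normal} is to be read in the dendroidal-space setting.
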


We next show that reduction and tensor products preserve normal objects.

\begin{prop}\label{L:rrnormal}
If $X$ is normal and $K$ is a simplicial set, then
\begin{enumerate}
\item $X_*$ is normal, \label{E:xstarnormal}
\item $X\times K$ is normal, and \label{E:xknormal}
\item $X_* \otimes K$ is normal. \label{E:xtensornormal}
\end{enumerate}
In particular, if $S$ is a tree then $\Omega[S]_*$ is normal.
\end{prop}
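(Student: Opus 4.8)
The plan is to prove the three claims in order, since each builds on the previous, and to deduce the final "in particular" statement by specializing to $X = \Omega[S]$. Throughout, I would use Theorem~\ref{T:cm-modelstruct}, which identifies normal objects with cofibrant objects in $\dspace_R$, so that "normal" means $\varnothing \to X$ has the free-action property of Definition~\ref{D:normal}. The key observation to exploit is that normality is tested tree-by-tree: at each tree $R$, I need the action of $\Aut(R)$ on $X_R$ (relative to $\varnothing_R = \varnothing$, i.e.\ on all of $X_R$) to be free.

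For claim~\eqref{E:xstarnormal}, I would argue levelwise. If $R$ is nonlinear, then $r(X)_R = X_R$ as noted after the definition of reduction, so the $\Aut(R)$-action is unchanged and remains free. If $R = [m]$ is linear, the reduction modifies $X_{[m]}$ by collapsing the image of $X_\eta \times \Omega[\eta]_{[m]}$; however, since $[m]$ is linear, $\Aut([m])$ is trivial (the linear trees have no nontrivial automorphisms), so freeness is automatic. This disposes of~\eqref{E:xstarnormal}.

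For claim~\eqref{E:xknormal}, I would again work at each tree $R$. Here $(X \times K)_R = X_R \times K_R$ as simplicial sets, with $\Aut(R)$ acting diagonally; but since $K$ is regarded as a constant dendroidal space, $\Aut(R)$ acts trivially on the $K_R$ factor and via the given action on $X_R$. A diagonal action on a product is free as soon as one factor carries a free action, so the freeness of the $\Aut(R)$-action on $X_R$ forces freeness on $X_R \times K_R$. Thus $X \times K$ is normal.

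For claim~\eqref{E:xtensornormal}, the cleanest route is to combine the previous two via Proposition~\ref{P:identifyrtimes}, which gives $X_* \otimes K = r(U(X) \otimes K)$; more directly, if $X$ is normal then $X \times K$ is normal by~\eqref{E:xknormal}, and then $(X \times K)_* = X \otimes K$ is normal by~\eqref{E:xstarnormal}. Applying this with $X$ replaced by $X_*$ (which is normal by~\eqref{E:xstarnormal}) yields that $X_* \otimes K$ is normal. Finally, the "in particular" statement follows because the representable $\Omega[S]$ is normal: at each tree $R$, $\Omega[S]_R = \Hom_\om(R, S)$, and I expect the $\Aut(R)$-action here to be free by Lemma~\ref{L:subthings} together with the fact that $\Omega[S]$ is normal as a representable (a standard fact, since representables are cofibrant in $\dspace_R$). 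Then $\Omega[S]_*$ is normal by~\eqref{E:xstarnormal}. \textbf{The main obstacle} I anticipate is verifying freeness of the $\Aut(R)$-action on $\Omega[S]_R = \Hom_\om(R,S)$ at nonlinear $R$; I would reduce this to the known normality of representable dendroidal sets, possibly citing Lemma~\ref{L:subthings} to transfer freeness along the monomorphisms $R \to S$ that index the relevant simplices.
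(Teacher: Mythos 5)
Your argument is correct, but for part \eqref{E:xstarnormal} it takes a genuinely different route from the paper. You check normality of $X_*$ tree by tree: reduction leaves nonlinear levels untouched ($r(X)_R = X_R$ since $\Omega[\eta]_R = \varnothing$), and at linear trees freeness is vacuous because $\Aut_\om([m]) = \Aut_\del([m])$ is trivial. The paper instead exhibits $X_*$ as a pushout of $X$ along the inclusion of a sub-dendroidal space into the normal object $X$, which is a normal monomorphism by Lemma~\ref{L:subthings}; since normal monomorphisms are exactly the cofibrations of $\dspace_R$ (Theorem~\ref{T:cm-modelstruct}), they are closed under pushout and composition, so $\varnothing \to \Omega[\eta] \to X_*$ is a cofibration. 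Your version is more elementary and self-contained, using only Definition~\ref{D:normal} and facts about $\om$ already recorded in the paper; the paper's version buys uniformity, since the same pushout-of-(normal monos) pattern recurs later, for instance in Lemma~\ref{L:acyc} and the proof of Proposition~\ref{P:filtration}. Parts \eqref{E:xknormal} and \eqref{E:xtensornormal} of your proof essentially coincide with the paper's: the diagonal-action observation, and $X_* \otimes K = (X_* \times K)_*$ from Proposition~\ref{P:identifyrtimes}. One blemish: your justification of the ``in particular'' statement is circular as written, since you appeal to normality of $\Omega[S]$ in order to prove that $\Aut(R)$ acts freely on $\Omega[S]_R$, yet that freeness \emph{is} the definition of normality, and Lemma~\ref{L:subthings} does not help here (it transfers normality to subobjects of a normal object, which is not what is at issue). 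What is actually needed is the standard fact that every representable $\Omega[S]$ is normal: an automorphism $\sigma$ of $R$ with $f\sigma = f$ for some $f\colon R \to S$ must be the identity, because operations of the free colored operad on a tree have pairwise distinct input colors. The paper uses this fact silently as well, so this is a presentational wrinkle rather than a gap.
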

\begin{proof}
The dendroidal space $X_*$ is obtained as a pushout
\[ \xymatrix{
\coprod_{E(S)} \Omega[\eta] \ar@{->}[d] \ar@{->}[r] & \Omega[\eta] \ar@{->}[d] \\
X \ar@{->}[r] & X_*
}\]
with the left hand map an inclusion of a sub-dendroidal space into a normal dendroidal space, hence is a cofibration by Lemma~\ref{L:subthings}.
Thus $\Omega[\eta] \to X_*$ is a cofibration as well, so we have that the composite $\varnothing \to \Omega[\eta] \to X_*$ is a cofibration.

For \eqref{E:xknormal}, notice that the following is a consequence of  Definition~\ref{D:normal}:  if $f: X\to Y$ is a normal monomorphism and $K$ is a simplicial set, then \[ f \times \id_K \colon X\times K \to Y\times K\] is normal.

Finally, \eqref{E:xtensornormal} follows from \eqref{E:xstarnormal} and \eqref{E:xknormal} since $X_* \otimes K = (X_* \times K)_*$ by Proposition \ref{P:identifyrtimes}.
\end{proof}

\begin{prop}\label{P:reedymodel}
There is a model category structure on $\dspace_*$ in which the cofibrations are the normal monomorphisms and the weak equivalences are defined levelwise.
\end{prop}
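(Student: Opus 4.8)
The plan is to build the structure with the recognition theorem for cofibrantly generated model categories \cite[11.3.1]{hirschhorn}, just as in the proof of Proposition~\ref{P:projmodel}, but now feeding in generators that detect the \emph{normal} monomorphisms rather than only the projective cofibrations. Let $I$ and $J$ be generating cofibrations and generating acyclic cofibrations for the generalized Reedy structure $\dspace_R$, whose cofibrations are the normal monomorphisms by Theorem~\ref{T:cm-modelstruct}, and set $I_R = \set{r(i)\mid i\in I}$ and $J_R = \set{r(j)\mid j\in J}$, where $r = (-)_*$ is the reduction functor. By Proposition~\ref{P:identifyrtimes}, reduction converts the simplicial-direction products appearing in these generators into the tensor $\otimes$, and by Proposition~\ref{L:rrnormal} it carries normal monomorphisms of $\dspace$ to normal monomorphisms of $\dspace_*$; so every map of $I_R$ is a normal monomorphism.

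The adjunction $r\dashv U$ then handles most of the bookkeeping. For a map $f$ of $\dspace_*$, a solution to the lifting problem for $r(i)$ against $f$ corresponds to one for $i$ against $U(f)$, so $f$ is $I_R$-injective (resp.\ $J_R$-injective) exactly when $U(f)$ is an acyclic Reedy fibration (resp.\ a Reedy fibration) in $\dspace$. We therefore \emph{define} the fibrations and acyclic fibrations of $\dspace_*$ to be the maps $f$ with $U(f)$ a Reedy fibration, resp.\ an acyclic Reedy fibration, and the weak equivalences to be the levelwise ones; since $U$ is a full inclusion, the latter are precisely the maps $f$ with $U(f)$ a weak equivalence of $\dspace_R$. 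With these definitions the retract and two-out-of-three axioms are immediate, and both the identity ``acyclic fibration $=$ fibration $\cap$ weak equivalence'' and Hirschhorn's conditions (3) and (4)(b) are inherited from the corresponding statements in $\dspace_R$ via $U$. Cocompleteness is already available, and the smallness needed for the small object argument holds for the same reasons as in Proposition~\ref{P:projmodel}, producing the two functorial factorizations.

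The one genuinely substantive point, and the step I expect to be the main obstacle, is the acyclicity condition: every $J_R$-cofibration must be a weak equivalence. Here the failure of $U$ to preserve colimits bites, since $U$ of a relative $J_R$-cell complex is not a $J$-cell complex in $\dspace$, so one cannot simply quote acyclicity of $J$-cells in $\dspace_R$. My plan is to verify the condition levelwise, using the nonlinear/linear dichotomy behind Proposition~\ref{P:identifyrtimes}. At a nonlinear tree $R$ one has $\Omega[\eta]_R = \varnothing$, so $r$ is the identity there and each $r(j)$ restricts to an honest acyclic cofibration of simplicial sets; at a linear tree $[m]$ reduction collapses the image of the edge set to a point, but this collapse is performed compatibly on source and target of the acyclic cofibration $j$, so a gluing-lemma argument shows the collapsed map is still a levelwise weak equivalence. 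Combining this with the closure of levelwise-acyclic normal monomorphisms under pushout and transfinite composition then gives that every $J_R$-cell complex, hence every $J_R$-cofibration, is a weak equivalence (and, being normal, also an $I_R$-cofibration since $r$ carries $I$-cell complexes to $I_R$-cell complexes).

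Finally, to match the statement I would identify the cofibrations of the resulting structure with the normal monomorphisms. For one inclusion, Proposition~\ref{L:rrnormal} shows each generator of $I_R$ is normal, and normal monomorphisms are closed under pushout, transfinite composition, and retract (using Lemma~\ref{L:subthings}), so every $I_R$-cofibration is normal. For the reverse inclusion I would use that $\dspace_*$ is reflective, so $rU\cong\id$: given a normal monomorphism $g$ in $\dspace_*$, the map $U(g)$ has the same levelwise values and hence is a normal monomorphism in $\dspace$, thus an $I$-cofibration by Theorem~\ref{T:cm-modelstruct}; applying $r$ and using $rU(g)=g$ exhibits $g$ as a retract of a relative $I_R$-cell complex, so $g$ is an $I_R$-cofibration. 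This yields exactly the model structure asserted.
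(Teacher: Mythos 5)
Your proposal is correct and follows essentially the same route as the paper: both take the generating (acyclic) cofibrations of the generalized Reedy structure $\dspace_R$, reduce them to obtain $I_R$ and $J_R$, check the reductions are normal monomorphisms, and apply the recognition theorem \cite[11.3.1]{hirschhorn}. The difference is only one of detail: where the paper declares the remaining verifications routine, you carry them out explicitly (the adjunction $r \dashv U$ to identify the $I_R$- and $J_R$-injectives, the levelwise gluing-lemma argument for acyclicity, and the retract-of-cell-complex argument identifying $I_R$-cofibrations with normal monomorphisms).
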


We write $\dspace_{*,R}$ for this model structure.

\begin{proof}
The generating cofibrations in the generalized Reedy model structure $\dspace_R$ can be taken to be those of the form
\[ \partial \Delta [n] \otimes \Omega [S] \cup \Delta[n] \otimes \partial \Omega [S] \rightarrow \Delta[n] \otimes \Omega [S] \]
where $n \geq 0$ and $S$ is an object of $\om$.  The generating cofibrations can similarly be taken to be those of the form
\[ V[n,k] \otimes \Omega [S] \cup \Delta[n] \otimes \partial \Omega [S] \rightarrow \Delta[n] \otimes \Omega [S] \]
for $n \geq 1$, $0 \leq k \leq n$, and $S$ an object of $\om$.

Using Lemma \ref{L:subthings}, we can reduce these maps so that they are in the category $\dspace_\ast$ and verify that these reductions are in fact normal monomorphisms.  Call these sets of reduced maps $I_R$ and $J_R$, respectively.  It is not hard to verify that $I_R$-cofibrations are precisely the normal monomorphisms in $\dspace_\ast$ and that the $J_R$-cofibrations are the acyclic cofibrations.  Since the fibrations are given by a lifting condition with respect to the acyclic cofibrations, we can see that the $I_R$-injectives are in fact acyclic fibrations and that the $J_R$-injectives are fibrations.  Applying \cite[11.3.1]{hirschhorn} gives the desired model structure.
\end{proof}

\section{Localization of the model structures on \texorpdfstring{$\dspace_*$}{dsSet*}}

We now establish localizations of the model structures from the previous section, so that the fibrant objects can be regarded as homotopy operads.

\begin{defn} \cite{cm-ds} \label{D:segalcore}
Let $S$ be a tree, $V$ its set of vertices, and, for $v\in V$, $C_v \ci S$  the subtree with a single vertex $v$ (so that $C_v \cong C_{|v|})$.
Then the \emph{Segal core} of $S$ is
\[ Sc[S] = \bigcup_{v\in V} \Omega[C_v] \ci \Omega[S]. \]
\end{defn}

Recall that we regard $\om$ as a skeletal category. Define the set of maps
\begin{equation} \class = \setm{ Sc[S]_* \hookrightarrow \Omega[S]_*}{S\in \ob \om}. \label{E:class} \end{equation}

\begin{prop}\label{P:blexist}
The Bousfield localizations $\el_\class \dspace_{*,R}$ and $\el_\class \dspace_{*,f}$ exist and have the same class of weak equivalences.
\end{prop}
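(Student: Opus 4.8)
The plan is to treat the existence of the localizations and the comparison of their weak equivalences separately. For existence, I would first record that $\dspace_*$ is locally presentable: it is a reflective subcategory of the presheaf category $\dspace = \sset^{\omop}$, with reflector the reduction functor $r$ of the previous section (the left adjoint to the inclusion $U$). Both $\dspace_{*,f}$ and $\dspace_{*,R}$ are cofibrantly generated, by Propositions~\ref{P:projmodel} and~\ref{P:reedymodel}, so each is combinatorial, and it remains only to check that each is left proper; existence of $\el_\class\dspace_{*,f}$ and $\el_\class\dspace_{*,R}$ then follows from the standard existence theorem for left Bousfield localizations of left proper cellular (equivalently, combinatorial) model categories \cite[4.1.1]{hirschhorn}. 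Left properness I would verify levelwise: colimits in $\dspace_*$ are computed at each nonlinear tree as ordinary colimits of simplicial sets and at each linear tree as pointed colimits (by Proposition~\ref{P:identifyrtimes} and the description of coproducts), the generating cofibrations of both structures are levelwise (pointed) monomorphisms, and both $\sset$ and the category of pointed simplicial sets are left proper, so the required pushout statement reduces to its levelwise counterpart.

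For the comparison of weak equivalences, the key point is that $\dspace_{*,f}$ and $\dspace_{*,R}$ share the same weak equivalences before localization, namely the levelwise weak equivalences, and hence the same homotopy function complexes $\maph(A,X)$, since these depend only on the class of weak equivalences. Consequently the condition in the definition of a $\class$-local object, that $\maph(\Omega[S]_*, W) \to \maph(Sc[S]_*, W)$ be a weak equivalence for every tree $S$, is literally the same condition in both structures; the two notions of $\class$-local object differ only in the required fibrancy, $R$-fibrant versus levelwise fibrant.

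I would then show that the two classes of $\class$-local objects agree up to weak equivalence. On the one hand, every $R$-local object is $f$-local: an $R$-fibrant object is levelwise fibrant by the proposition comparing $\sset_f^{\mathcal C}$ and $\sset_R^{\mathcal C}$ (every generalized Reedy fibration is a levelwise fibration), and the $\maph$-condition is unchanged. On the other hand, given any $f$-local object $W$, I would choose an $R$-fibrant replacement $W \to W'$, which is a levelwise weak equivalence; since $\maph(A,-)$ carries levelwise weak equivalences to weak equivalences, $W'$ again satisfies the $\maph$-condition and is $R$-fibrant, hence $R$-local, and is weakly equivalent to $W$. Because a map $g\colon X\to Y$ is a $\class$-local equivalence exactly when $g^*\colon \maph(Y,W)\to\maph(X,W)$ is a weak equivalence for every local object $W$, and $\maph(-,W)$ is invariant under weak equivalences in the variable $W$, quantifying over $f$-local objects yields the same class of maps as quantifying over $R$-local objects. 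Thus the two classes of $\class$-local equivalences coincide.

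The step I expect to be the main obstacle is the verification of left properness, since the cofibrant objects do not exhaust $\dspace_*$ and the behavior of colimits at linear trees, where they are pointed, must be tracked carefully; by contrast, the comparison of weak equivalences is formal once one knows that homotopy function complexes depend only on the weak equivalences and that every $R$-fibrant object is levelwise fibrant.
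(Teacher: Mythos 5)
Your proposal is correct and follows essentially the same route as the paper: existence via Hirschhorn's localization theorem \cite[4.1.1]{hirschhorn} after checking left properness of $\dspace_{*,f}$ and $\dspace_{*,R}$, and agreement of weak equivalences via the fact that homotopy function complexes depend only on the (common, levelwise) weak equivalences. The paper verifies left properness by observing that a pushout in $\dspace_*$ computed in $\dspace$ is still reduced, so the two pushouts coincide and left properness is inherited from the unreduced category; your levelwise verification is the same observation one level down and equally valid. The weak-equivalence comparison you spell out (Reedy-fibrant local objects are levelwise fibrant, projective local objects admit Reedy-fibrant replacements that remain local, and $\maph(-,W)$ is invariant under weak equivalences in $W$) is precisely the argument the paper outsources to the beginning of \cite[\S 7]{thesis}, so it is a virtue of your write-up that it is self-contained. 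One small slip: cellular and combinatorial are not equivalent notions, so your parenthetical is wrong as stated; \cite[4.1.1]{hirschhorn} requires cellularity (which, like the paper, you do not verify), while the combinatorial variant of the existence theorem is due to Smith and is not what is cited --- either hypothesis suffices here, so this does not affect the substance of the argument.
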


\begin{proof} To prove existence, we apply \cite[4.1.1]{hirschhorn}. To do so, we need only verify that $\dspace_*$ is left proper for each of these model structures.
Let $A \rightarrow B$ be a cofibration and $B \leftarrow A \rightarrow C$ a pushout diagram $\dspace_*$.  We want to show that the map $C \rightarrow B \amalg_A C$ is a cofibration also.  If we take the pushout in $\dspace$, notice that $B \amalg_A C$ is still reduced.  Therefore, these two pushouts coincide.  Then the result follows from left properness of the unreduced category.

A similar argument to that at the beginning of \cite[\S 7]{thesis} shows that these localized model categories have the same class of weak equivalences.
\end{proof}

We generally omit $\class$ from the notation, and just write $\el \dspace_{*,R}$ and $\el \dspace_{*,f}$ for these localizations. We call the fibrant objects in $\el \dspace_{*,R}$ \emph{Segal operads}.

Notice that in this localization $\Omega[R]_*$ is weakly equivalent to $\Omega[S]_*$ whenever the trees $R$ and $S$ have the same set of sub-corollas, as follows.  Observe that
\[ Sc[R]_* = \coprod_{v\in V(R)} \Omega[C_{v}]_* \cong \coprod_{v\in V(R)} \Omega[C_{|v|}]_* \] so that
$Sc[R]_* \cong Sc[S]_*$, and we have weak equivalences
\[ \Omega[R]_* \overset{\simeq}\hookleftarrow Sc[R]_* \cong Sc[S]_* \overset\simeq\hookrightarrow \Omega[S]_*. \]

The following is a variant of \cite[4.2]{simpmon}, and its proof, which is technical, is deferred to Section~\ref{S:filtrprop}.

\begin{prop}\label{P:filtration}
Let $P$ be the free operad on the generating set $M=\set{x_1^{j_1}, \dots, x_m^{j_m}}$, where $x^p$ is in arity $p$. If $S$ is any tree whose list of sub-corollas is $C_{j_1}, \dots, C_{j_m}$, then
\[ L_1 \Omega[S]_* \simeq \nerve(P) \]
in the localized model structure $\el\dspace_{*,R}$, where $L_1$ denotes its fibrant replacement functor.
\end{prop}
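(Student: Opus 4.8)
The plan is to produce a natural comparison map $\Omega[S]_* \to \nerve(P)$, to verify that its target is already fibrant in $\el\dspace_{*,R}$, and to show that this map is a $\class$-local equivalence; since $L_1$ is the fibrant replacement functor, these together give $L_1\Omega[S]_* \simeq \nerve(P)$. The comparison map comes from the operad morphism $S \to P$ that collapses all of the edge-colors of $S$ onto the single color of $P$ and sends the vertex of valence $j_i$ to the generator $x_i$. Applying the nerve yields $\Omega[S] = \nerve(S) \to \nerve(P)$, and since $\nerve(P)$ is reduced (Remark~\ref{R:reducednerve}) this factors through the reduction to give $\Omega[S]_* \to \nerve(P)$.

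First I would check that $\nerve(P)$ is fibrant in $\el\dspace_{*,R}$. It is reduced because $P$ has a single color, and it is Reedy fibrant because it is discrete: for each object $\alpha$ of $\om$ the relative matching map is a map of discrete $\Aut(\alpha)$-sets and hence a fibration in $\sset^{\Aut(\alpha)}$. For $\class$-locality, the essential point is that $P$ is free on the vertices of each tree, so that restriction to the sub-corollas gives an isomorphism $\nerve(P)_R = \Hom_{\coper}(R,P) \cong \prod_{v\in V(R)} P(|v|) = \prod_{v\in V(R)} \nerve(P)_{C_{|v|}}$. Because $\nerve(P)$ is reduced, mapping out of the Segal core $Sc[R]_* \cong \coprod_{v} \Omega[C_{|v|}]_*$ turns the overlaps (which are copies of $\Omega[\eta]$) into trivial contributions, so that $\maph(Sc[R]_*,\nerve(P)) \simeq \prod_v \nerve(P)_{C_{|v|}}$ and the Segal map $\maph(\Omega[R]_*,\nerve(P)) \to \maph(Sc[R]_*,\nerve(P))$ is precisely the isomorphism above. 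Thus $\nerve(P)$ is $\class$-local.

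It then remains to show that the comparison map is a $\class$-local equivalence, and this is the heart of the argument. The inclusions in $\class$ give a local equivalence $Sc[S]_* = \coprod_{i=1}^m \Omega[C_{j_i}]_* \hookrightarrow \Omega[S]_*$, so by two-out-of-three it suffices to show that $Sc[S]_* \to \nerve(P)$, which selects the generators $x_i \in P(j_i)$, is a local equivalence; equivalently, that $\nerve(P)$ is the free Segal operad on these generators. I would prove this by filtering $\nerve(P)$ by the number of vertices appearing in the decorated trees that represent elements of the free operad $P$: let $F_k \subseteq \nerve(P)$ consist, at each tree $R$, of those tuples of operations built from at most $k$ generators, so that $F_1$ contains exactly the identities and single generators, $\colim_k F_k = \nerve(P)$, and the map $Sc[S]_* \to \nerve(P)$ factors through a local equivalence $Sc[S]_* \to F_1$. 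The combinatorial core is to realize each inclusion $F_k \hookrightarrow F_{k+1}$ as obtained from $F_k$ by attaching, along a coproduct of Segal-core inclusions $Sc[T]_* \hookrightarrow \Omega[T]_*$ from $\class$, one cell for each way of grafting an additional generator onto a decorated tree with $k$ vertices. Since every such map is a local equivalence, and local equivalences that are normal monomorphisms (Proposition~\ref{L:rrnormal}) are closed under pushout and transfinite composition, it follows that $F_1 \hookrightarrow \nerve(P)$, and hence $Sc[S]_* \to \nerve(P)$, is a local equivalence.

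The main obstacle is this last filtration step. One must organize the grafting bookkeeping so that passing from $k$ to $k+1$ generators is genuinely a pushout of maps in $\class$, and in doing so one must control the symmetric-group actions on the trees $T$ so that the attaching maps remain normal and the pushouts compute homotopy pushouts, despite the fact that $\nerve(P)$ and the intermediate stages $F_k$ need not themselves be normal. This equivariance issue, together with the wedge-versus-product behavior of coproducts in $\dspace_*$ along linear trees, is exactly where the generalized-Reedy $\Aut(\alpha)$-equivariant framework is needed, and it is the point at which the technical, deferred filtration property of Section~\ref{S:filtrprop} does the work; it is the operadic analogue of the filtration argument for monoids in \cite[4.2]{simpmon}.
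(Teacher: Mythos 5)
Your global strategy---a comparison map $\Omega[S]_*\to\nerve(P)$, locality of $\nerve(P)$, reduction by two-out-of-three to showing $Sc[S]_*\to\nerve(P)$ is a local equivalence, and a filtration of $\nerve(P)$ by number of generators built up by cell attachments---is in outline the same as the paper's (your stages $F_k$ are the paper's $\Psi^n$), and your first three steps are sound. (One small misattribution: the isomorphism $\nerve(P)_R\cong\prod_{v\in V(R)}P(|v|)$ holds for \emph{every} single-colored operad $P$, because it is the colored operad $R$ that is free on its vertices; freeness of $P$ is needed elsewhere.) But the crucial combinatorial step fails as you state it: $F_k\hookrightarrow F_{k+1}$ is \emph{not} a pushout of Segal-core inclusions $Sc[T]_*\hookrightarrow\Omega[T]_*$. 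A cell $\Omega[T]_*\to\nerve(P)$ given by labelling the $k+1$ vertices of $T$ by generators has external faces (restrictions to proper subtrees) which are again generator-labellings and hence already lie in $F_k$; these faces are not contained in $Sc[T]_*$, so a pushout along the Segal core freely adjoins \emph{new} copies of them rather than identifying them with the existing elements. Concretely, take $P$ free on one generator $x$ of arity $1$ and $T=[3]$: both external $[2]$-faces of the cell $(x,x,x)$ equal $(x,x)$, which is already in $F_2$, but neither lies in $Sc[3]_*$, so $F_2\cup_{Sc[3]_*}\Omega[3]_*$ contains two spurious extra copies of $(x,x)$ and the natural map to $F_3$ is surjective but not injective. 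Hence the attaching maps genuinely cannot be taken from $\class$, and the closure properties you invoke do not apply.

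The repair, which is what the paper's deferred proof actually does, is to attach along the external-boundary inclusions $\partial^{ext}\Omega[T]_*\hookrightarrow\Omega[T]_*$ (the union of the codimension-one \emph{external} faces, which do contain the problematic faces), with one cell for each $\Aut(T)$-orbit of generator-labellings of trees $T$ with exactly $n$ vertices---not one for ``each way of grafting a generator,'' which would overcount, since a given labelled tree arises from several graftings. Inner faces (composites of generators) are genuinely new at each stage, and freeness of $P$ guarantees that distinct labelled trees have distinct inner faces; this is what makes the relevant square a pushout. Since $\partial^{ext}\Omega[T]_*\hookrightarrow\Omega[T]_*$ is not in $\class$, one must prove separately that it is an acyclic cofibration in $\el\dspace_{*,R}$: this is the paper's Lemma~\ref{L:acyc}, proved by rewriting $Sc[T]_*$ as the union, over the codimension-one subtrees $R\ci T$, of their Segal cores, showing by a pushout argument that $Sc[T]_*\to\partial^{ext}\Omega[T]_*$ is an acyclic cofibration, and then applying two-out-of-three against $Sc[T]_*\to\Omega[T]_*$. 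With that lemma in hand, your closing appeal to closure of acyclic cofibrations under coproducts, pushouts, and transfinite composition finishes the argument exactly as in the paper; and your worry about normality of the stages is not an issue, since acyclic cofibrations in the localized model structure are closed under these operations regardless of whether $\nerve(P)$ or the $\Psi^n$ are cofibrant.
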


\section{Comparison with reduced homotopy algebras over \texorpdfstring{$\mct_\oper$}{toh}}

In this section, we give an explicit Quillen equivalence between the model categories $\el \dspace_{*,f}$ and $\el \sset_*^{\T_\oper}$.  We make the comparison via a functor $J: \om^{op} \to \T_\oper$.

\begin{defn}
The functor $J^{op}: \om \to \T_{\oper}^{op}$ takes a tree $S$ to the free operad on $V(S) = \set{v_1, \dots, v_n}$.
\end{defn}

It can be shown that the precomposition functor $J^* \colon \el \sset^{\T_\oper} \rightarrow \el \dspace_f$ and its left Kan extension $J_!$ restrict to give an adjoint pair
\[ J_!: \dspace_* \rightleftarrows \sset^{\T_\oper}_* :\!J^* \] on the reduced categories.
This left Kan extension can be described explicitly on the reduction of representables.

\begin{prop}\label{P:lke}
The left Kan extension of $\Omega[S]_*$ along $J$ is  $\Hom_{\T_\oper} (J(S), -)$.
\end{prop}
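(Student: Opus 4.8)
The plan is to pin down $J_! \Omega[S]_*$ purely by its universal property, via a double application of the Yoneda lemma, carried out directly in the reduced categories $\dspace_*$ and $\sset^{\T_\oper}_*$. The point is that $\Omega[S]_*$ corepresents ``evaluation at $S$'' among \emph{reduced} dendroidal spaces, while $\Hom_{\T_\oper}(J(S),-)$ corepresents ``evaluation at $J(S)$'' among reduced $\T_\oper$-algebras, and the functor $J^*$ is just precomposition with $J$, so it matches these two evaluations. No explicit colimit needs to be computed.

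First I would record the two representability facts that drive the argument. Since $\Omega[S]_* = r(\Omega[S])$ and reduction $r$ is left adjoint to the inclusion $U$, for any reduced dendroidal space $Y$ the (enriched) Yoneda lemma for the representable discrete dendroidal space $\Omega[S]$ gives
\[ \map_{\dspace_*}(\Omega[S]_*, Y) \cong \map_{\dspace}(\Omega[S], UY) \cong Y_S. \]
On the theory side, because $\star = T_0$ is terminal in $\T_\oper$, the value of $\Hom_{\T_\oper}(J(S),-)$ at $\star$ is a single point, so this functor lies in the reduced subcategory $\sset^{\T_\oper}_*$; as that subcategory is full, ordinary Yoneda gives $\map_{\sset^{\T_\oper}_*}(\Hom_{\T_\oper}(J(S),-), G) \cong G(J(S))$ for every reduced $G$.

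Next I would combine these with the reduced adjunction $J_! \dashv J^*$ noted just before the proposition. For $G \in \sset^{\T_\oper}_*$ the functor $J^*$ is precomposition with $J$, so $(J^*G)_S = G(J(S))$, and $J^*G$ is again reduced since $(J^*G)_\eta = G(J(\eta)) = G(\star) = *$ (here $J(\eta) = \star$ because $\eta$ has no vertices). Hence, naturally in $G$,
\begin{align*}
\map_{\sset^{\T_\oper}_*}(J_! \Omega[S]_*, G) &\cong \map_{\dspace_*}(\Omega[S]_*, J^* G) \\
&\cong (J^* G)_S = G(J(S)) \\
&\cong \map_{\sset^{\T_\oper}_*}(\Hom_{\T_\oper}(J(S),-), G),
\end{align*}
where the first isomorphism is the reduced adjunction, the second is the representability fact applied to $Y = J^*G$, and the last is Yoneda. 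The Yoneda lemma in $\sset^{\T_\oper}_*$ then forces $J_! \Omega[S]_* \cong \Hom_{\T_\oper}(J(S),-)$.

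The step most likely to conceal an error is the reduction bookkeeping, so that is where I would be most careful: one must confirm that $\Omega[S]_*$ corepresents evaluation at $S$ on $\dspace_*$ (not merely on $\dspace$), that $\Hom_{\T_\oper}(J(S),-)$ is genuinely reduced, and that $J^*$ preserves reducedness, so that every isomorphism in the chain stays inside the reduced categories. As a consistency check I would verify the unreduced identity $J_!\Omega[S] \cong \Hom_{\T_\oper}(J(S),-)$ by the same two-step Yoneda computation (using $\map_{\dspace}(\Omega[S],-) = (-)_S$ and $(J^*G)_S = G(J(S))$); and, if a more hands-on verification is wanted, I would recompute $J_!\Omega[S]_*$ from the reduction pushout of $\Omega[\eta] \leftarrow \coprod_{E(S)}\Omega[\eta] \to \Omega[S]$ used in Proposition~\ref{L:rrnormal}, applying the colimit-preserving $J_!$ and the values $J_!\Omega[\eta] = \Hom_{\T_\oper}(\star,-)$ and $J_!\Omega[S] = \Hom_{\T_\oper}(J(S),-)$, and checking that the resulting pushout simplifies to $\Hom_{\T_\oper}(J(S),-)$.
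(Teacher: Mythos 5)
Your argument is correct, but it takes a genuinely different route from the paper's. The paper proves the proposition by direct computation: it evaluates the left Kan extension via the colimit formula $J_!(\Omega[S]_*)(T_N)=\colim_{g\in J/T_N}(\Omega[S]_*)_{R}$, builds an explicit comparison map $\newPsi$ to $\Hom_\oper(T_N,J(S))$ (your pushout observation for linear trees appears there as the well-definedness step), and then checks by hand that $\newPsi$ descends to the colimit and is bijective. You instead pin down both sides by the functor they corepresent on $\sset_*^{\T_\oper}$ and finish with Yoneda, computing no colimit at all. The gain is brevity and conceptual clarity; the cost is that the real content is displaced into the assertion---stated in the paper just before the proposition but not proved there---that $(J_!,J^*)$ restricts to an adjunction between the reduced categories. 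Your chain of isomorphisms literally characterizes the left adjoint of the restricted $J^*$; for that object to coincide with the honest left Kan extension of $\Omega[S]_*$ (which is what the paper computes), one must also know that the unreduced left Kan extension of a reduced object is again reduced. This is true and quick to check: since $\star$ is terminal in $\T_\oper$, one has $(J_!X)(\star)\cong\colim_{\om^{op}}X$, and for reduced $X$ the category of elements of $X$ is connected (every element $(R,x)$ maps to $(\eta,*)$ along an edge inclusion), so this colimit is a point. You should include that one-line verification, or at least flag the dependence, since the paper offers no proof of the restriction claim. Finally, one practical advantage of the paper's explicit method is that it transfers with minor changes to Proposition~\ref{P:lknerve}, where the input $\nerve(T_M)$ is not the reduction of a representable dendroidal space, so no Yoneda-style shortcut is available; your approach would leave that companion result needing a separate argument.
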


We defer the proof to Section~\ref{S:lke}.

\begin{cor}\label{C:splitsc}
If $S$ is a tree, then
$J_! Sc[S]_* \cong \coprod_{v\in V(S)} \Hom_{\mct_\oper}(T_{\set{v}}, -)$.
\end{cor}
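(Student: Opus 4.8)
The plan is to derive Corollary~\ref{C:splitsc} directly from Proposition~\ref{P:lke} together with the fact that $J_!$, as a left adjoint, preserves colimits. By Definition~\ref{D:segalcore}, the Segal core $Sc[S]$ is a union $\bigcup_{v\in V(S)} \Omega[C_v]$ inside $\Omega[S]$, and I would first argue that after reduction this becomes a genuine coproduct: as already observed in the discussion following Proposition~\ref{P:blexist}, we have
\[ Sc[S]_* = \coprod_{v\in V(S)} \Omega[C_v]_*. \]
The reason the union turns into a disjoint union upon reduction is that the only overlaps among the subtrees $C_v$ occur along edges, i.e.\ along linear subtrees, and reduction collapses the shared copies of $\Omega[\eta]$ to the basepoint; thus the intersections become trivial in $\dspace_*$, leaving a wedge/coproduct of the reduced corollas.

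Next I would apply $J_!$ to this coproduct. Since $J_!$ is a left adjoint it commutes with colimits, so
\[ J_! Sc[S]_* \cong J_!\left( \coprod_{v\in V(S)} \Omega[C_v]_* \right) \cong \coprod_{v\in V(S)} J_! \Omega[C_v]_*. \]
It then remains to identify each $J_! \Omega[C_v]_*$. By Proposition~\ref{P:lke}, the left Kan extension of $\Omega[C_v]_*$ along $J$ is $\Hom_{\T_\oper}(J(C_v), -)$. Since $C_v$ is the corolla with the single vertex $v$, the functor $J^{op}$ sends it to the free operad on the one-element vertex set $\set{v}$, which in the multi-sorted notation of Example~\ref{operad} is exactly the object $T_{\set{v}}$. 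Hence $J(C_v) = T_{\set{v}}$ and each summand is $\Hom_{\mct_\oper}(T_{\set{v}}, -)$, giving the claimed formula.

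I do not expect a serious obstacle here, since this is a formal consequence of a left adjoint preserving coproducts combined with the explicit computation already handed to us by Proposition~\ref{P:lke}. The one point that genuinely requires care is the identification $Sc[S]_* = \coprod_{v} \Omega[C_v]_*$: one must be sure that reduction really does separate the corollas by killing their common edges, so that the colimit defining the Segal core reduces to a disjoint union rather than a more complicated pushout. This is precisely the phenomenon recorded in the remark after Proposition~\ref{P:blexist}, so I would simply invoke that computation rather than reprove it. With that identification in hand, the corollary follows immediately.
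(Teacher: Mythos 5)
Your proposal is correct and takes essentially the same route as the paper's proof: both rest on the decomposition $Sc[S]_* = \coprod_{v\in V(S)} \Omega[C_v]_*$ (recorded in the paper right after Proposition~\ref{P:blexist}), the fact that $J_!$ preserves coproducts as a left adjoint, and the identification $J_!\Omega[C_v]_* = \Hom_{\mct_\oper}(J(C_v),-) = \Hom_{\mct_\oper}(T_{\set{v}},-)$ from Proposition~\ref{P:lke}. Your added explanation of why the union of corollas becomes a genuine coproduct after reduction (the corollas overlap only along edges, whose copies of $\Omega[\eta]$ are collapsed to the basepoint) is a correct elaboration of a point the paper asserts without proof.
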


\begin{proof}
The functor $J_!$ is a left adjoint, so preserves coproducts, and we see that
\[ J_! Sc[S]_* = J_! \left( \coprod_{v\in V(S)} \Omega[C_{v}]_* \right) \cong  \coprod_{v\in V(S)} J_!  \Omega[C_{v}]_*. \] By Proposition~\ref{P:lke}, we have
\[ J_! \Omega[C_{v}]_* = \Hom_{\mct_\oper}(J(C_{v}), -) = \Hom_{\mct_\oper}(T_{\set{v}}, -). \]
\end{proof}

A minor variation of the proof of Proposition~\ref{P:lke} gives the following proposition, whose proof we again defer to Section~\ref{S:lke}.

\begin{prop}\label{P:lknerve}
The left Kan extension of $\nerve(T_M)$ along $J$ is $\Hom_{\T_\oper} (T_M, -)$.
\end{prop}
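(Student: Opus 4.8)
The plan is to compute $J_!\nerve(T_M)$ from the coend (equivalently, the category-of-elements colimit) that presents the left Kan extension, in parallel with the proof of Proposition~\ref{P:lke}, the only new ingredient being a description of $\nerve(T_M)$ as a functor of the indexing tree. Since a map of colored operads out of the free colored operad on a tree $S$ is determined freely by the images of the vertex-generators, we have
\[ \nerve(T_M)_S = \Hom_{\coper}(S, T_M) \cong \prod_{v\in V(S)} T_M(|v|). \]
On the other hand, recalling that $J(S)$ is the object of $\T_\oper$ corresponding to the free operad on the vertex set $V(S)$ and using the product decomposition \eqref{E:theoryprod} of $T_M$, the very same data specifies a morphism $T_M \to J(S)$ in $\T_\oper$. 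First I would check that this gives a bijection $\nerve(T_M)_S \cong \Hom_{\T_\oper}(T_M, J(S))$, natural in $S$; in other words, $\nerve(T_M) \cong J^*\Hom_{\T_\oper}(T_M, -)$. By Remark~\ref{R:reducednerve} this dendroidal space is reduced, so the whole computation takes place in the reduced categories on which the adjoint pair $J_! \dashv J^*$ is defined.

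Granting this identification, I would feed it into the left Kan extension. Writing $\nerve(T_M)$ as the colimit of the reduced representables $\Omega[S]_*$ over its category of elements and using that $J_!$ preserves colimits, Proposition~\ref{P:lke} gives
\[ J_!\nerve(T_M) \cong \colim_{(S,\phi)} \Hom_{\T_\oper}(J(S), -), \]
the colimit being taken over all pairs $(S, \phi\colon S \to T_M)$. There is a tree $S_0$ whose vertices have valences exactly $m_1,\dots,m_n$ (for instance, graft the corollas $C_{m_1},\dots,C_{m_n}$ along a linear spine), so that $J(S_0) = T_M$ and, by Proposition~\ref{P:lke} again, $\Hom_{\T_\oper}(T_M,-) = J_!\Omega[S_0]_*$. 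Under the bijection above, the identity $\id_{T_M}$ corresponds to the color-collapsing map $S_0 \to T_M$, and composition with this universal element defines the comparison map $J_!\nerve(T_M) \to \Hom_{\T_\oper}(T_M,-)$; equivalently, this is the counit $J_!J^*\Hom_{\T_\oper}(T_M,-) \to \Hom_{\T_\oper}(T_M,-)$. It is automatically surjective, since the summand of the colimit indexed by $(S_0, \text{collapse})$ is already all of $\Hom_{\T_\oper}(T_M,-)$.

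The hard part will be proving that this comparison map is also injective, i.e.\ that the colimit collapses onto the representable rather than merely surjecting onto it. This is exactly where freeness of $T_M$ must be used: every operation of $T_M$ is a composite of the generators of arities $m_1,\dots,m_n$, so every $\phi\colon S\to T_M$ can be connected, through morphisms of the category of elements, to the universal element at $S_0$, and any two such chains are identified by the coend relations. This is the operadic analogue of the monoid computation underlying \cite[4.2]{simpmon}, and it is precisely the content that makes the statement a genuine variation of Proposition~\ref{P:lke} rather than a formal consequence of it. Throughout I would track the reduced structure exactly as in the representable case, so that linear trees, the basepoints of Remark~\ref{R:basepoint}, and the role of $\Omega[\eta]$ require no separate treatment.
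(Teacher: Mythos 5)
Your preliminary steps are fine and track the paper closely: the identification $\nerve(T_M)\cong J^*\Hom_{\T_\oper}(T_M,-)$ is exactly Proposition \ref{P:pullbackofhom} (proved there via the bijection $I$ of Definition \ref{D:defnofI}), the presentation of $J_!\nerve(T_M)$ as a colimit of $\Hom_{\T_\oper}(J(S),-)$ over the category of elements is legitimate, the comparison map is indeed the counit, and your surjectivity observation is correct. The problem is that the proposal ends where the proposition begins: injectivity, which you yourself flag as ``the hard part,'' is not proved but only asserted, via the claim that every $(S,\phi)$ can be connected through morphisms of the category of elements to the universal element $(S_0,c)$, with any two chains identified by the coend relations. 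That claim is not a routine verification, and in the form stated it fails.

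Here is the concrete obstruction. Morphisms in the category of elements are induced by maps in $\om$, and a map in $\om$ can never identify two vertices of a tree, because the edges of a tree are pairwise distinct colors. Take $M$ to consist of a single generator $x$ of arity $1$, so that $S_0=[1]$ and $c\colon[1]\to T_M$ sends the unique vertex to $x$, and let $\phi\colon[2]\to T_M$ send both vertices to $x$. A morphism $([2],\phi)\to([1],c)$ would be a map $b\colon[2]\to[1]$ in $\om$ (equivalently in $\del$, by the paper's identification of hom-sets between linear trees) with $c\circ b=\phi$; but every such $b$ sends at least one vertex of $[2]$ to an identity, since it cannot send both vertices to the single vertex of $[1]$ (the inner edge of $[2]$ would have to map to both edges of $[1]$). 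So $c\circ b\neq\phi$, and $([2],\phi)$ admits no morphism to the universal element at all. Moreover, one can check that the generating relations of the colimit preserve this repetition data (for instance, with $T_N$ free on one unary generator $y$, the set of multiplicities with which the $\phi$-labeled vertices occur in the word $u(y)$ is unchanged by every generating relation), so the deficiency is not repaired by passing to longer zigzags through larger trees; the connectivity you assert is precisely the assertion that needs proof, and it is not the operadic shadow of the monoid computation you cite. Note that the paper's own argument is organized differently at exactly this point: it works with the comma-category colimit $\colim_{g\in J/T_N}\nerve(T_M)_R$, takes as comparison the map $a\mapsto I(a)\circ g$, and proves surjectivity by producing, for a given $g\colon T_N\to T_M$, a preimage not over your universal element but in the component of an isomorphism $f=I(h)^{-1}\colon T_N\to J(S)$, where $S$ is a tree chosen with $V(S)\cong N$; injectivity is then obtained by adapting the zigzag of Proposition \ref{P:lke}. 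The fact that these canonical preimages and your universal element sit in visibly different components of the indexing category is a measure of how much content the one-sentence connectivity claim is hiding; a complete proof has to confront it directly.
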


We make use of the following definition in the proof of Proposition~\ref{P:lknerve}.

\begin{defn}\label{D:defnofI}
Define \[ I: \Hom_\coper (R, T_M)  \to \Hom_\oper (J(R), T_M)\] by $I(a)(v) = a(v)$ for each $v\in V(R)$.
\end{defn}

The function $I$ is actually a bijection
since both $a$ and $I(a)$ are completely determined by their values on $V(R)$ and there are no restrictions other than preservation of valence.

At this point we know that the functor $J_!$ takes the diagram of local equivalences
\[ \Omega[S]_* \leftarrow Sc[S]_* \rightarrow \nerve (J(S)) \]
from Proposition~\ref{P:filtration} to local equivalences:
\[ \xymatrix{
J_! \Omega[S]_* \ar@/_3pc/[ddr]^=_{\ref{P:lke}}& J_! Sc[S]_* \ar@{->}[l] \ar@{->}[r] \ar@{->}[d]^=_{\ref{C:splitsc}} & J_! \nerve (J(S)) \ar@/^3pc/[ddl]_=^{\ref{P:lknerve}}\\
& \coprod_{V(S)} \Hom(T_v, -) \ar@{->}[d]^\simeq & \\
& \Hom(J(S), -).
}\]

\begin{prop}\label{P:pullbackofhom} Let $M$ be a set with valences. Then
\[ \nerve (T_M) \cong J^* \Hom_{\mct_\oper}(T_M, -). \]
\end{prop}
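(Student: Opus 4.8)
The plan is to produce the isomorphism one tree at a time and then verify that these bijections are natural in the tree, so that they assemble into an isomorphism of dendroidal sets. Both sides are functors $\omop \to \Set$. On the right, unwinding the definition of the precomposition $J^*$ gives
\[ \left( J^* \Hom_{\T_\oper}(T_M,-)\right)_S = \Hom_{\T_\oper}(T_M, J(S)), \]
and since $\T_\oper$ is by construction the opposite of the category of finitely generated free operads (Example~\ref{operad}), this set is $\Hom_\oper(J(S), T_M)$, where $J(S) = J^{op}(S)$ is the free operad on $V(S)$ and $T_M$ is regarded as a free operad. On the left, by definition of the nerve, the value at $S$ is $\nerve(T_M)_S = \Hom_\coper(S, T_M)$. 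The comparison between these two sets is exactly the function $I$ of Definition~\ref{D:defnofI} (taking $R = S$), which was already observed to be a bijection: a map $a\colon S \to T_M$ of colored operads into the single-colored operad $T_M$ necessarily collapses all colors of $S$, hence is determined by its values $a(v)$ on the vertices, and this is precisely the data of an operad map $I(a)\colon J(S) \to T_M$.

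Next I would check naturality of the family $\{I\}$. The cleanest way is to exhibit $I^{-1}$ as precomposition with a natural map. For each tree $S$ there is a colored operad map $q_S\colon S \to J(S)$ that sends every edge of $S$ to the single color of the free operad $J(S)$ and each generating vertex $v$ to the corresponding generator; because $T_M$ has a single color, every $a\in \Hom_\coper(S, T_M)$ factors uniquely as $\bar a \circ q_S$ with $\bar a = I(a)$, so that $I^{-1} = q_S^*$. Naturality of $I$ then reduces to naturality of the collapsing maps, i.e.\ to commutativity of $q_S \circ \phi = J^{op}(\phi)\circ q_{S'}$ for every morphism $\phi\colon S' \to S$ of $\om$. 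Evaluating both colored operad maps on a generator $v'\in V(S')$ recovers exactly the defining rule for $J^{op}$ on morphisms (the underlying uncolored operation of $\phi(v')$), so the square commutes and the levelwise bijections assemble into an isomorphism of dendroidal sets, which is the assertion.

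The main obstacle here is not any hard computation but bookkeeping the variances correctly: one must keep straight that $J$ is contravariant (a functor $\omop \to \T_\oper$), that $\Hom_{\T_\oper}(T_M, J(S))$ unwinds to $\Hom_\oper(J(S), T_M)$ under the opposite-category identification, and that $J^{op}$ acts on morphisms by precisely the color-collapsing rule that $I$ uses on generators. Once these identifications are pinned down, both the levelwise bijection (via Definition~\ref{D:defnofI}) and its naturality are formal.
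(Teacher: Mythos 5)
Your proposal is correct and takes essentially the same approach as the paper: the paper's proof is exactly the levelwise bijection $\nerve(T_M)_S = \Hom_\coper(S,T_M) \cong \Hom_\oper(J(S),T_M) = \Hom_{\mct_\oper}(T_M,J(S))$ given by the map $I$ of Definition~\ref{D:defnofI}. Your additional verification of naturality, via the collapsing maps $q_S$ with $I^{-1}=q_S^*$, only spells out what the paper leaves implicit.
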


\begin{proof}
As mentioned above, $I$ gives a bijection
\[ \nerve(T_M)_S = \Hom_\coper (S, T_M) \overset{I}\cong \Hom_\oper(J(S), T_M) = \Hom_{\mct_\oper}(T_M, J(S)). \]
\end{proof}

\begin{prop}
The adjoint pair
\[ J_! : \dspace_{*,f} \rightleftarrows \sset_*^{\T_\oper} :\! J^*\]
is a Quillen pair.
\end{prop}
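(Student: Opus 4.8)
The plan is to verify that the right adjoint $J^*$ is a right Quillen functor, i.e.\ that it preserves fibrations and acyclic fibrations; this is one of the standard equivalent conditions for $(J_!, J^*)$ to be a Quillen pair. I would argue via $J^*$ rather than $J_!$ because the cofibrations of the projective structure $\dspace_{*,f}$ are only described implicitly (as retracts of relative $I_f$-cell complexes), whereas the fibrations and weak equivalences in both $\dspace_{*,f}$ and $\sset_*^{\T_\oper}$ are defined levelwise, and levelwise classes interact transparently with precomposition.

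The central observation is that $J^*$ is precomposition with $J\colon \omop \to \T_\oper$, so that for any $W\colon \T_\oper \to \sset$ and any tree $S$ we have $(J^* W)_S = W(J(S))$. Suppose $g\colon W \to W'$ is a fibration in $\sset_*^{\T_\oper}$, meaning $W(T) \to W'(T)$ is a fibration of simplicial sets for every object $T$ of $\T_\oper$. Evaluating at $T = J(S)$ shows that $(J^*W)_S \to (J^*W')_S$ is a fibration for every tree $S$, so $J^* g$ is a levelwise fibration, which by Proposition~\ref{P:projmodel} is precisely a fibration in $\dspace_{*,f}$. The identical argument with ``fibration'' replaced by ``weak equivalence'' shows that $J^*$ carries levelwise weak equivalences to levelwise weak equivalences. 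Since in each of these model structures the acyclic fibrations are exactly the maps that are simultaneously fibrations and weak equivalences, $J^*$ preserves acyclic fibrations as well, and is therefore right Quillen.

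What remains is the bookkeeping that makes the statement meaningful: that $J^*$ really lands in the reduced subcategory $\dspace_*$, and that $(J_!, J^*)$ is genuinely an adjunction there. For the first point, $\eta$ has no vertices, so $J(\eta)$ is the free operad on the empty set, namely $\star$; since $W(\star) = *$ for every object $W$ of $\sset_*^{\T_\oper}$, we get $(J^*W)_\eta = W(\star) = *$, so $J^*W$ is indeed reduced. The adjunction on the reduced categories was recorded just before Proposition~\ref{P:lke}. Granting these, the two preservation properties established above immediately yield that $(J_!, J^*)$ is a Quillen pair. There is no substantial obstacle here; the one point I would take care over is confirming that both model structures really do have their fibrations and weak equivalences defined levelwise, since it is exactly this feature that lets the comparison be made objectwise through $J$.
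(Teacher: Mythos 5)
Your proof is correct and takes essentially the same route as the paper: both verify that $J^*$ (precomposition with $J$) carries the levelwise-defined (acyclic) fibrations of $\sset_*^{\T_\oper}$ to levelwise (acyclic) fibrations in $\dspace_{*,f}$, and then invoke the standard criterion that a right adjoint preserving fibrations and acyclic fibrations yields a Quillen pair. The extra bookkeeping you include (that $J^*W$ is reduced since $J(\eta)=\star$ and $W(\star)=*$) is handled in the paper by the earlier observation that the adjunction restricts to the reduced categories, so nothing is missing.
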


\begin{proof}
A map $W_1 \to W_2$ is an (acyclic) fibration in $\sset_*^{\T_\oper}$ if and only if $W_1(T_N) \to W_2(T_N)$ is an (acyclic) fibration in $\sset$ for each $T_N$.
Thus if $W_1 \to W_2$ is an (acyclic) fibration,
\[ W_1(J(S)) = J^*(W_1)_S \to J^*(W_2)_S = W_2(J(S)) \]
is an (acyclic) fibration as well. The fibrations and weak equivalences in $\dspace_{*,f}$ are also defined levelwise, so this implies that $J^*(W_1) \to J^*(W_2)$ is an (acyclic) fibration as well. The result now follows from \cite[8.5.3]{hirschhorn}.
\end{proof}

\begin{lem} \label{L:keyweneed}
For any cofibrant object $X$ in $\dspace_{*,R}$, we have that $L_1 X$ is weakly equivalent to $J^* L_2 J_! X$.
\end{lem}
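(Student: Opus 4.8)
The plan is to prove the statement first for the reduced representables $\Omega[S]_*$, where all of the needed computations are already available, and then to pass to an arbitrary cofibrant $X$ by a skeletal (cellular) induction. Throughout I will work with the derived unit of the adjunction $J_! \dashv J^*$; since $X$ is cofibrant we may arrange that $J_! X$ computes $\mathbb{L}J_! X$ (replacing $X$ by a projective-cofibrant model if necessary, which is harmless because the two localizations $\el\dspace_{*,R}$ and $\el\dspace_{*,f}$ share their weak equivalences by Proposition~\ref{P:blexist}). The first reduction is this: because the fibrant replacement $X \to L_1 X$ is a local equivalence, it suffices to prove that the derived unit $u_X \colon X \to J^* L_2 J_! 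X$ is a local equivalence; the zigzag $L_1 X \xleftarrow{\simeq} X \xrightarrow{u_X} J^* L_2 J_! X$ then yields the asserted weak equivalence.

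For the base case, fix a tree $S$ and set $P = J(S)$, the free operad on $V(S)$, which is the object $T_{V(S)}$ of $\T_\oper$. On the one hand, Proposition~\ref{P:filtration} gives $L_1 \Omega[S]_* \simeq \nerve(J(S))$, the equivalence being realized by the canonical map $\Omega[S]_* \to \nerve(J(S))$ induced by collapsing the colors of $S$. On the other hand, Proposition~\ref{P:lke} identifies $J_! \Omega[S]_*$ with the representable $\Hom_{\T_\oper}(J(S),-)$; this functor preserves products and satisfies $\Hom_{\T_\oper}(J(S),\star)=\ast$, hence is a strict (in particular homotopy) $\T_\oper$-algebra and is already fibrant in $\el\sset_*^{\T_\oper}$, so $L_2 J_! \Omega[S]_* \simeq \Hom_{\T_\oper}(J(S),-)$. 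Applying $J^*$ and invoking Proposition~\ref{P:pullbackofhom} with $T_M = J(S)$ gives $J^* L_2 J_! \Omega[S]_* \cong \nerve(J(S))$. Under these identifications the derived unit $u_{\Omega[S]_*}$ becomes, via the natural bijection $I$ of Definition~\ref{D:defnofI}, precisely the canonical map $\Omega[S]_* \to \nerve(J(S))$, which is a local equivalence by Proposition~\ref{P:filtration}. Thus $u_X$ is a local equivalence when $X = \Omega[S]_*$.

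Two structural facts drive the induction. First, $J_!$ sends each localizing map $Sc[S]_* \hookrightarrow \Omega[S]_*$ to a local equivalence — this is exactly the content of the diagram assembled from Proposition~\ref{P:lke}, Corollary~\ref{C:splitsc}, and Proposition~\ref{P:lknerve} — so the Quillen pair $J_! \dashv J^*$ descends to $J_! \colon \el\dspace_{*,f} \rightleftarrows \el\sset_*^{\T_\oper} \colon J^*$ and $\mathbb{L}J_!$ preserves homotopy colimits. Second, $J^*$ carries homotopy $\T_\oper$-algebras to Segal operads: for a homotopy algebra $W$, the derived adjunction together with Proposition~\ref{P:lke} and Corollary~\ref{C:splitsc} identifies the map $\maph(\Omega[S]_*, J^*W) \to \maph(Sc[S]_*, J^*W)$ with the theory-product map $W(T_{V(S)}) \to \prod_{v \in V(S)} W(T_v)$, which is a weak equivalence. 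With these in hand I would write a normal (cofibrant) $X$ as a transfinite composite of pushouts of the cells $\partial\Omega[S]_* \hookrightarrow \Omega[S]_*$ (absorbing $-\otimes\Delta[n]$, which is inessential since $\Delta[n]$ is contractible) and propagate the equivalence $u$ across these pushouts, the boundaries $\partial\Omega[S]_*$ being covered by a nested induction on the number of vertices, the source functor being the identity, and the representables being homotopy generators.

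The main obstacle is that the target functor $J^* L_2 J_! = \mathbb{R}J^* \mathbb{L}J_!$ involves the right adjoint $J^*$, and right derived functors need not preserve homotopy colimits; this is the one genuinely delicate point. I would resolve it using the special nature of $J^*$ as a precomposition (evaluation) functor: $J^*$ preserves all colimits and all levelwise weak equivalences, and it carries the levelwise monomorphisms underlying the cofibrations to levelwise monomorphisms. Since the Bousfield localizations share their cofibrations with the underlying levelwise structures, the pushouts in the skeletal filtration are modeled by pushouts along such monomorphisms, which $J^*$ preserves as homotopy pushouts; combined with the fact just established that $J^*$ sends the relevant local (fibrant) objects to Segal operads — on which local and levelwise equivalences coincide — this shows that $\mathbb{R}J^*$ preserves the homotopy pushouts appearing in the filtration. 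Feeding this through the induction shows that $u_X$ is a local equivalence for every cofibrant $X$, and the zigzag of the first paragraph then yields $L_1 X \simeq J^* L_2 J_! X$, as required.
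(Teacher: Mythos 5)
Your base case is correct and uses exactly the paper's ingredients: Proposition~\ref{P:filtration} on one side, Propositions~\ref{P:lke} and~\ref{P:pullbackofhom} on the other, together with the observation that $\Hom_{\T_\oper}(J(S),-)$ is a strict, hence local, algebra. The genuine gap is in the inductive step, at precisely the point you flag as delicate, and your proposed resolution does not close it. To propagate the derived unit across a cell attachment $X_{n+1}=X_n\cup_A B$ you need
\[
J^*L_2J_!\bigl(X_n\cup_A B\bigr)\ \simeq\ \bigl(J^*L_2J_!X_n\bigr)\cup^h_{J^*L_2J_!A}\bigl(J^*L_2J_!B\bigr).
\]
Since $J_!$ preserves pushouts and cofibrations, and local equivalences are stable under homotopy pushout by left properness, there is a local equivalence $J_!X_{n+1}\to Y$ with $Y=L_2J_!X_n\cup^h_{L_2J_!A}L_2J_!B$, and the properties of $J^*$ you cite (preservation of colimits, monomorphisms, levelwise equivalences) do identify $J^*Y$ with the right-hand side above. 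But the induction then requires that $J^*Y\to J^*L_2Y\simeq J^*L_2J_!X_{n+1}$ be an equivalence, i.e.\ that $J^*$ preserve the localization map $Y\to L_2Y$. The object $Y$ is a homotopy pushout of local objects and is \emph{not} itself local; what $J^*$ is known to preserve are levelwise equivalences, and hence (by \cite[3.2.13]{hirschhorn}) local equivalences between \emph{local} objects, since it is merely right Quillen for the localized structures. Neither of your two structural facts applies to $Y$, so the assertion that ``$\mathbb{R}J^*$ preserves the homotopy pushouts appearing in the filtration'' is unsupported --- indeed it is essentially a restatement of the lemma for $X_{n+1}$, so the argument becomes circular exactly where it needs content. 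The same problem recurs at the transfinite-composition stage.

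This is the difficulty the paper's proof is engineered to avoid: rather than a cellular induction, it writes $X$ as $\hocolim_{\delo}\bigl([n]\mapsto\amalg_i\Omega[S_i]_*\bigr)$ and appeals twice to \cite[4.1]{simpmon}, the statement that localization may be repeated inside a homotopy colimit, $L\hocolim(-)\simeq L\hocolim L(-)$; applying this once for $L_1$ on the dendroidal side and once for $L_2$ on the theory side reduces both $L_1X$ and $J^*L_2J_!X$ to homotopy colimits of already-local objects built from representables, where Propositions~\ref{P:filtration}, \ref{P:lke} and~\ref{P:pullbackofhom} finish the comparison. Your argument needs an input of exactly this type --- either \cite[4.1]{simpmon} itself or a substitute lemma letting you push $L_2$ inside your pushouts and transfinite composites before applying $J^*$. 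With such a lemma your cellular scheme would go through; without it, the inductive step fails.
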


\begin{proof}
We begin by regarding $X$ as a simplicial object in dendroidal sets and write
\[ X = \hocolim_{\delo}([n] \mapsto \amalg_i \Omega[S_i]_\ast). \]  Applying the localization functor $L_1$, a result which allows us to repeat the localization inside the homotopy colimit \cite[4.1]{simpmon}, and Proposition \ref{P:filtration} to get
\[ \begin{aligned}
L_1 X & = L_1 \hocolim_{\delo}([n] \mapsto \amalg_i \Omega[S_i]_\ast) \\
& \simeq L_1 \hocolim_{\delo} L_1([n] \mapsto \amalg_i \Omega[S_i]_\ast) \\
& \simeq L_1 \hocolim_{\delo}([n] \mapsto \nerve J(\amalg_i S_i)).
\end{aligned} \]

On the other hand, using the fact that $J_!$ commutes with the homotopy colimit and another application of \cite[4.1]{simpmon}, we get
\[ \begin{aligned}
J^*L_2J_! X & = J^* L_2 J_!(\hocolim_{\delo}([n] \mapsto \amalg_i \Omega[S_i]_\ast)) \\
& \simeq J^*L_2( \hocolim_{\delo}J_!([n] \mapsto \amalg_i \Omega[S_i]_\ast)) \\
&\simeq J^* L_2(\hocolim_{\delo} L_2 J_! ([n] \mapsto \amalg_i \Omega[S_i]_\ast)) \\
& \simeq J^* L_2 (\hocolim_{\delo}([n] \mapsto \Hom_{\T_\oper}(\amalg_i \Omega[S_i]_\ast, -))).
\end{aligned} \]

Now, noticing that the functor over which we are taking the homotopy colimit is already local in both cases, we can apply Proposition \ref{P:pullbackofhom} to get the desired result.
\end{proof}

\begin{thm}
The model categories $\el \dspace_{*,f}$ and $\el \sset_*^{\T_\oper}$ are Quillen equivalent.
\end{thm}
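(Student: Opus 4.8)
The plan is to first promote the Quillen pair $J_!\colon \dspace_{*,f} \rightleftarrows \sset_*^{\T_\oper} \colon J^*$ to the two localizations, and then verify the Quillen equivalence via the criterion of \cite[1.3.16]{hovey}. Since left Bousfield localization leaves the cofibrations unchanged, $J_!$ still preserves cofibrations, so by \cite[3.3.18]{hirschhorn} the pair descends to a Quillen pair $J_!\colon \el\dspace_{*,f} \rightleftarrows \el\sset_*^{\T_\oper} \colon J^*$ as soon as $J_!$ sends each generating local equivalence $Sc[S]_* \hookrightarrow \Omega[S]_*$ of $\class$ to a weak equivalence in $\el\sset_*^{\T_\oper}$. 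Both source and target here are normal, hence cofibrant (Proposition~\ref{L:rrnormal} and Lemma~\ref{L:subthings}), so no cofibrant replacement is needed, and Proposition~\ref{P:lke} together with Corollary~\ref{C:splitsc} identifies the image with
\[ \coprod_{v\in V(S)} \Hom_{\T_\oper}(T_v, -) \longrightarrow \Hom_{\T_\oper}(J(S), -), \]
which is exactly one of the maps \eqref{E:theorylocalize} used to define $\el\sset_*^{\T_\oper}$, hence a local equivalence. Consequently $J^*$ is right Quillen between the localizations, and in particular carries fibrant objects of $\el\sset_*^{\T_\oper}$ to $\class$-local objects of $\el\dspace_{*,f}$.

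By \cite[1.3.16]{hovey} it then suffices to prove two statements: that $J^*$ reflects weak equivalences between fibrant objects, and that the derived unit $X \to J^* L_2 J_! X$ is a weak equivalence for every cofibrant $X$. The second is the content of Lemma~\ref{L:keyweneed}, which supplies a natural $\class$-local equivalence $L_1 X \simeq J^* L_2 J_! X$; combining this with the fibrant replacement $X \to L_1 X$ and the two-out-of-three property shows that the derived unit is itself a $\class$-local equivalence, once one checks that the equivalence of Lemma~\ref{L:keyweneed} is the one induced by the unit of the adjunction. Note that a projective-cofibrant $X$ is normal, so Lemma~\ref{L:keyweneed}, which is stated for $\dspace_{*,R}$-cofibrant objects, does apply here.

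For the reflection property, I would take a map $f\colon W_1 \to W_2$ between fibrant objects of $\el\sset_*^{\T_\oper}$ with $J^* f$ a weak equivalence. Since $J^* W_1$ and $J^* W_2$ are $\class$-local, $J^* f$ is in fact a levelwise weak equivalence, i.e. $W_1(J(S)) \to W_2(J(S))$ is a weak equivalence for every tree $S$. Every object $T_M$ of $\T_\oper$ is isomorphic to $J(S)$ for a suitable tree $S$, for instance one obtained by grafting the corollas $C_{m_1}, \dots, C_{m_n}$ indexed by the entries of $M$; hence $f$ is a levelwise, and therefore local, weak equivalence. This establishes both hypotheses of \cite[1.3.16]{hovey}, so $(J_!, J^*)$ is a Quillen equivalence.

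The real work lies behind Lemma~\ref{L:keyweneed}, whose proof rests on the deferred and genuinely technical filtration result Proposition~\ref{P:filtration}; this is where I expect the main obstacle to be. Granting those, the only remaining delicate points are the naturality needed to recognize the abstract equivalence $L_1 X \simeq J^* L_2 J_! X$ as the derived unit, and the essential surjectivity of $J$ on objects that powers the reflection argument.
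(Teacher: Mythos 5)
Your proof follows essentially the same route as the paper's: the Quillen pair descends to the localizations because Proposition~\ref{P:lke} and Corollary~\ref{C:splitsc} identify $J_!$ of the maps in $\class$ with the maps \eqref{E:theorylocalize} defining $\el\sset_*^{\T_\oper}$ (the paper invokes \cite[3.3.20(1)(a)]{hirschhorn} where you invoke 3.3.18), and the two hypotheses of \cite[1.3.16]{hovey} are then checked exactly as in the paper, via Lemma~\ref{L:keyweneed} for the derived unit and the surjectivity of $J$ on objects for reflection of weak equivalences between fibrant objects. Your only slip is harmless: in the projective structure ``normal, hence cofibrant'' runs backwards (projective cofibrations are normal, not conversely), but the maps of $\class$ are indeed maps of projective-cofibrant objects, since $Sc[S]_* \cong \coprod_{v\in V(S)} \Omega[C_v]_*$ and $\Omega[S]_*$ are (coproducts of) reduced representables, so no cofibrant replacement is needed --- which is precisely what the paper's assertion $\tilde{C}(\class)=\class$ amounts to.
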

\begin{proof}
We first show that $(J_!, J^*)$ is a Quillen pair for the localized model structures. We have localized the category of reduced dendroidal spaces with respect to the set $\class = \set{ Sc[S]_* \hookrightarrow \Omega[S]_*}$ and  $\sset_*^{\T_\oper}$ with respect to the set $\class'= \left\{ \coprod \Hom(T_{m_i}, -) \to \Hom(T_M, -) \right\}$.
Then $J_! \tilde{C} (\class) = J_! \class = \class'$ by Proposition~\ref{P:lke} and Corollary~\ref{C:splitsc}. We apply \cite[3.3.20(1)(a)]{hirschhorn} to see that we still have a Quillen pair after localizing.

We turn to showing that $(J_!, J^*)$ is a Quillen equivalence, using \cite[1.3.16]{hovey}. The first step is to show that $J^*$ reflects weak equivalences between fibrant objects. In both categories under consideration the fibrant objects are precisely the local objects, and local equivalences between local objects are just the usual weak equivalences \cite[3.2.13]{hirschhorn}. Suppose that $A \to B$ in $\el \sset_*^{\T_\oper}$ is a map between local objects such that $J^*A \to J^*B$ is a weak equivalence. We have that $A \to B$ is a weak equivalence if and only if $A(T_M) \to B(T_M)$ is a weak equivalence for all $T_M$. Let $S$ be some tree with $J(S) = T_M$. Then
\[ A(T_M) = AJ(S) = (J^*A)_S \to (J^*B)_S = BJ(S) = B(T_M) \] is a weak equivalence.

We must now show that if $X$ in $\el \dspace_{*,f}$ is cofibrant then $X\to J^* L_2 J_! X$ is a local equivalence, where $L_2$ is the localization functor on $\sset_*^{\T_\oper}$. This is exactly the statement of Lemma~\ref{L:keyweneed}.
\end{proof}

\section{The theory of operads with a group action}

We now extend the above result to the case of operads with a group action.  An action of a group $G$ on an operad $P$ is simply an action of $G$ on $P(n)$ for each $n\geq 0$. We do not insist upon any compatibility with the structure maps of $P$, for doing so would exclude interesting examples such as the circle action on the framed little disks operad, as discussed below in Remark \ref{littledisks}.

\begin{example}
Suppose that $X$ is a $G$-space. Then the usual endomorphism operad $\mathcal E_X$ has an action of $G$. It is defined, for $f\in \mathcal E_X (n) = \map(X^{\times n}, X)$ by
\[ (g\bullet f) (x_1, \dots, x_n) = g\bullet (f(x_1, \dots, x_n)). \]
In fact, if $X$ is a deformation retract of another space $Y$, then $\mathcal E_Y$ inherits an action of $G$.
\end{example}

\begin{rem} \label{littledisks}
Our notion of group action is not a special case of the group operad actions of \cite[\S 2.5]{zhang}. An abelian group $A$ naturally gives rise to three group operads: the first has $G_k=A$ for all $k$, the second has $G_1=A$ and $G_k=\{ e \}$ otherwise, and the third has $G_k=A^k$. However, the equivariance condition of \cite[2.30]{zhang} prevents actions by these group operads in important special cases of interest, such as the circle action on the framed little disks operad. It is worth noting that if we consider the second of these three group operads, then a group operad action is an action in our sense.
\end{rem}

The structure of a group action on an operad can be described using the machinery of algebraic theories.  To see how to understand group actions in this language, we begin by describing the simpler scenario of a group action on a set.

\begin{example} \cite[3.2]{multisort} \label{group}
Consider the category $\mathcal P$ of group actions on sets. Objects of $\mathcal P$ are pairs $(G,X)$ where $G$ is a group and $X$ is a $G$-set.  The morphisms $(G,X) \to (H,Y)$ consist of pairs $(\varphi, f)$ where $\varphi: G\to H$ is a group homomorphism, $f: X\to Y$ is a map of sets, which are subject to the compatibility condition that $f(g\cdot x) = \varphi(g)\cdot f(x)$.
For each
sort $i=1,2$ we have a forgetful map
\[ \varphi_i: \mathcal P \rightarrow \Set \]
and its left adjoint
\[ \lambda_i: \Sets \rightarrow \mathcal P. \]
When $i=1$, we have, for any group $G$ and set $X$,
\[ \varphi_1(G,X) = G \]
(where on the right-hand side $G$ denotes the underlying set of
the group $G$) and for any set $S$
\[ \lambda_1(S)= (F_S, \varnothing) \]
where $F_S$ denotes the free group on the set $S$.

Similarly, when $i=2$, we define
\[ \varphi_2(G,X) = X \]
and
\[ \lambda_2(S) = (e, S) \]
where $e$ denotes the trivial group.

The objects of the theory are representatives of the isomorphism
classes of the $\lambda_1\{1, \ldots, i\} \amalg \lambda_2\{1, \ldots, j\}$
for all choices of $i$ and $j$.
To encode the action of the group on the set, we use the coproduct in $\mathcal P$
\[ (G,X) \amalg (G',X') = (H, (H \times_G X) \amalg (H \times_{G'} X')) \]
where $H= G \ast G'$ is the free product of groups and
\[ H \times_G X = \{(h,x)|h \in H, x \in X\}/\sim  \]
where $(hg,x) \sim (h,gx)$ for any $g \in G$.  We can now take the
opposite of a full subcategory of $\mathcal{P}$ as above to
obtain the corresponding theory.  In particular, the objects of
the theory look like
\[ \lambda_1\{1, \ldots ,i\} \amalg \lambda_2\{1, \ldots, j\}
= (F_i, F_i \times \{1, \ldots ,j\}), \] where $F_i$ denotes the
free group on $i$ generators.
\end{example}

To find the appropriate theory for group actions on operads, we apply the approach of the previous example to the theory of operads rather than to the theory of sets.  We get an $\mathbb N$-sorted theory, but with the degrees shifted by one so that the indices on the operad sorts are consistent with the theory of operads; we could call it an $(\mathbb N \cup \{-1\})$-sorted operad.

Let $\nsgo$ be the category of groups acting on operads.
The objects are pairs $(G, P)$, where $G$ is a group and $P$ is an operad in the category of sets, together with an action of $G$ on $P$ and morphisms which respect the group action.
A morphism $(G,P) \to (G',P')$ consists of a group homomorphism $f:G\to G'$ and an operad map $h: P\to P'$ so that $h(g\bullet p) = f(g) \bullet h(p)$.
Given such a pair, we can define two different kinds of forgetful functors as follows.  First, we have
\[ \varphi_{-1} \colon \nsgo \rightarrow \Sets \] where $\varphi_{-1}(G, P) = G$, the underlying set of the group $G$.  This functor has a left adjoint
\[ \lambda_{-1} \colon \Sets \rightarrow \nsgo \] where for a set $S$, $\lambda_{-1}(S)=(F_S, \star)$ where $F_S$ is the free group on the set $S$ and $\star$ denotes the initial operad such that $\star(1)$ consists of a single point and $\star(n) = \varnothing$ for $n \neq 1$.

For all $n \geq 0$, we also have functors
\[ \varphi_n \colon \nsgo \rightarrow \Sets \] where $\varphi_n(G, P) = P(n)$, and their left adjoints
\[ \lambda_n \colon \Sets \rightarrow \nsgo \] are given by $\lambda_n(S) = (e, P_{S,n})$, where $e$ is the trivial group and $P_{S,n}$ is the free operad on the set $S$ at arity $n$.

The coproduct in $\nsgo$ is given by
\[ (G, P) \amalg (G', P') = (H, (H \times_G P) \amalg (H \times_{G'} P')), \]
where $H=G \ast G'$ and
at level $n$
\[ (H \times_G P)(n) = \{(h,x) \mid h \in H, x \in P(n) \}/((hg, x) \sim (h, gx)). \]
In particular, if we take the coproducts of elements resulting from our left adjoint functors, we get
\[ (e, P) \amalg (e, P') = (e, P \ast P') \] and
\[ (e, P) \amalg (G, \ast) = (G, (G \times P) \amalg (G \times_G \ast)) = (G, G \times P). \]

Thus, we define the objects of the theory $\Tgop$ to consist of finite coproducts of the form
\[ \lambda_{-1}\{1, \ldots ,n_{-1}\} \amalg \lambda_0\{1, \ldots, n_0\} \amalg \cdots \amalg \lambda_k\{1, \ldots ,n_k\} \] where $k \geq -1$ and $n_k\geq 0$.  This object can be more concisely written as the pair
\[ (F_{n_{-1}}, F_{n_{-1}} \times P_{n_0, \ldots ,n_k}) \] where $P_{n_0, \ldots ,n_k}$ denotes the free operad on $n_j$ generators of arity $j$ for each $0 \leq j \leq k$.

\begin{prop}
The category of product-preserving functors $\Tgop \rightarrow \Sets$ is equivalent to the category of operads equipped with group actions.  Similarly, the category of product-preserving functors $\Tgop \rightarrow \SSets$ is equivalent to the category of simplicial operads equipped with simplicial group actions.
\end{prop}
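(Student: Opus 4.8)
The plan is to recognize this as a direct instance of the general correspondence, established in \cite{multisort}, between a category $\mathcal C$ admitting forgetful/free adjunctions for each sort and the product-preserving $\Set$-valued functors on its associated multi-sorted theory; the formal work is therefore to confirm that $\nsgo$ satisfies those hypotheses and to make the resulting equivalence explicit. First I would record that $\nsgo$ is cocomplete, with coproducts built as above from free products of groups and the balanced products $H\times_G P$, and that for each sort $m\in\{-1\}\cup\N$ the forgetful functor $\varphi_m$ has the displayed left adjoint $\lambda_m$. These are exactly the conditions under which \cite{multisort} produces $\Tgop$ and guarantees that $Alg^{\Tgop}$ is equivalent to $\nsgo$.

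On objects the equivalence is forced: given $(G,P)\in\nsgo$, the associated functor $A\colon\Tgop\to\Sets$ satisfies $A(T_{-1})=\varphi_{-1}(G,P)=G$ and $A(T_n)=\varphi_n(G,P)=P(n)$, and \eqref{E:theoryprod} then determines $A$ on a general object through $A(T_M)\cong\prod A(T_{m_i})$. Conversely, from a product-preserving $A$ I would set $G:=A(T_{-1})$ and $P(n):=A(T_n)$ and read the operations off the morphisms of $\Tgop$: the group structure from the morphisms dual to the corresponding maps of free groups, the operadic composition and unit from those dual to grafting of free operads, and the $\Sigma_n$-actions from the relevant automorphisms, just as for plain operads in Example~\ref{operad}. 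Product-preservation guarantees that these assemble coherently.

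The step requiring genuine care, and the one I expect to be the main obstacle, is verifying that the morphisms of $\Tgop$ encode the group action $G\times P(n)\to P(n)$ together with its axiom, and impose nothing more. This action is produced by the coproduct computation $(e,P)\amalg(G,\star)=(G,G\times P)$: the dual morphism in $\Tgop$ supplies an operation $A(T_{-1})\times A(T_n)\to A(T_n)$, and the identification $(hg,x)\sim(h,gx)$ defining $H\times_G P$ translates into the compatibility $g\bullet(g'\bullet p)=(gg')\bullet p$ of the action. I would then check that no axiom linking the action to the operadic structure maps is forced, so that the algebras are precisely operads carrying an unconstrained $G$-action, matching the convention fixed at the start of this section.

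For the simplicial statement I would observe that a product-preserving functor $\Tgop\to\SSets$ is the same as a simplicial object in product-preserving functors $\Tgop\to\Sets$, since $\SSets=\Sets^{\delo}$ and product-preservation is a levelwise condition that commutes with evaluation in the simplicial direction. By the first part such an object is a simplicial object in $\nsgo$, that is, a simplicial group acting on a simplicial operad, which is the claimed equivalence; no new difficulty arises here beyond the fact that limits in $\SSets$ are computed levelwise.
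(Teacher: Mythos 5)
Your proposal is correct and matches the paper's (implicit) approach: the paper states this proposition without proof, treating it as an instance of the general multi-sorted theory correspondence from \cite{multisort} applied to the free/forgetful adjunctions $(\varphi_m, \lambda_m)$ and the coproduct computations in $\nsgo$ set up immediately before the statement, which is precisely the argument you spell out. Your added details --- reading the action operation off the coproduct $(e,P)\amalg(G,\star)=(G,G\times P)$, checking that no compatibility with the operadic structure maps is forced, and reducing the simplicial case to levelwise product-preservation --- correctly fill in what the paper leaves unsaid.
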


\newcommand{\goperad}{\text{$G$-$\oper$}}

In our other case of interest, where we consider operads equipped with an action of a fixed discrete group $G$, the situation is simpler.
There is a theory $\mathcal T_{\goperad}$ of operads with $G$-action, which can be obtained either by considering free objects in the category of $G$-operads $\goperad$ or by modifying $\mathcal T_\oper$ as in \cite[3.3.5.h]{borceux2} for $G$-sets.

\begin{prop}
The category of product-preserving functors $\mathcal T_\goperad \rightarrow \Sets$ is equivalent to the category of operads equipped with a $G$-action.  Similarly, the category of product-preserving functors $\mathcal T_\goperad \rightarrow \SSets$ is equivalent to the category of simplicial operads equipped with a $G$-action.
\end{prop}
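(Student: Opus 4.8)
The strategy is to recognize this statement as an instance of the multi-sorted Lawvere correspondence, which identifies the algebras over a theory arising from a category $\mathcal{C}$ with $\mathcal{C}$ itself. Since $\mathcal{T}_{\goperad}$ is by construction the opposite of the full subcategory of finitely generated free $G$-operads inside $\goperad$, the general theorem of \cite{multisort} applies once we check that $\goperad$ carries the required adjoint structure. Thus the plan is to reduce the proposition to a verification of hypotheses, entirely parallel to Example~\ref{group} and to the preceding proposition for $\nsgo$, rather than to carry out a new computation.

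First I would fix the set of sorts to be $\N$, indexing the arities of the operad; note that there is no group sort here, since $G$ is fixed. For each $n$ I would exhibit the forgetful functor $\varphi_n \colon \goperad \to \Set$ given by $\varphi_n(P) = P(n)$, together with a left adjoint $\lambda_n$. Concretely, $\lambda_n(S)$ is the free operad on the set $G \times S$ at arity $n$, with $G$ acting through left multiplication on the first factor; a $G$-equivariant operad map out of $\lambda_n(S)$ is determined by the images of the generators $(e,s)$, which may be arbitrary elements of $P(n)$, so that $\Hom_{\goperad}(\lambda_n(S), P) \cong \Hom_{\Set}(S, P(n))$, as required. With these adjunctions in hand, the objects $T_M = \lambda_{m_1}(1) \amalg \cdots \amalg \lambda_{m_k}(1)$ are exactly the finitely generated free $G$-operads, and the defining decomposition $T_M \cong \prod_i T_{m_i}$ is the coproduct in $\goperad$ transported to the opposite category. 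The equivalence then runs as usual: to a $G$-operad $P$ one associates the product-preserving functor $T_M \mapsto \Hom_{\goperad}(F_M, P)$, where $F_M$ is the free $G$-operad representing $T_M$, with product-preservation following from $\Hom_{\goperad}(\coprod_i F_{m_i}, P) \cong \prod_i \Hom_{\goperad}(F_{m_i}, P)$; conversely a product-preserving $A$ reconstitutes a $G$-operad with underlying sets $A(T_n)$ and structure maps read off from the morphisms of the theory, and these assignments are mutually inverse up to natural isomorphism. One could equally well invoke the second construction of $\mathcal{T}_{\goperad}$, obtained by modifying $\mathcal{T}_\oper$ in the manner of \cite[3.3.5.h]{borceux2}, and apply the same correspondence.

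For the simplicial statement I would not repeat the argument but instead use that products in $\sset$ are computed levelwise, so that a product-preserving functor $\mathcal{T}_{\goperad} \to \sset$ is the same thing as a simplicial object in the category of product-preserving functors $\mathcal{T}_{\goperad} \to \Set$. By the first part this is a simplicial object in $\goperad$, which, since $G$ is discrete, is precisely a simplicial operad equipped with a $G$-action. The main obstacle, such as it is, remains the verification that the free $G$-operad functors $\lambda_n$ are genuine left adjoints and that the resulting theory recovers $\goperad$ on the nose; but since this mirrors the already-established case of Example~\ref{group}, and the fixed-$G$ situation is strictly simpler than the varying-group case treated just above, I expect no real difficulty beyond bookkeeping.
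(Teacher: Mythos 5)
Your overall strategy---reduce the statement to the multi-sorted Lawvere correspondence of \cite{multisort} by exhibiting forgetful/free adjunctions for the category of $G$-operads---is exactly what the paper does implicitly; it offers no argument beyond this reduction and the citation to \cite[3.3.5.h]{borceux2}. However, your verification of the key hypothesis contains a genuine error: the object you propose for $\lambda_n(S)$, namely the free operad $F(G\times S)$ on the set $G\times S$ concentrated in arity $n$ with $G$ permuting generators, is not the free object of the category in question. The paper's $G$-actions are expressly \emph{not} required to be compatible with the operad structure maps (this is the whole point of the discussion preceding Remark~\ref{littledisks}, to accommodate the circle action on the framed little disks operad). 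The action you describe on $F(G\times S)$ is the functorially induced one, which acts by operad \emph{automorphisms}; consequently $F(G\times S)$ is the free object in the category of operads with $G$ acting through operad maps, which is a different category. Your claimed bijection $\Hom(\lambda_n(S),P)\cong\Hom_{\Set}(S,P(n))$ fails in the paper's category: given $f\colon S\to P(n)$, the induced operad map $h$ with $h(g,s)=g\cdot f(s)$ is equivariant on generators, but equivariance on composites forces $g\cdot\gamma(p;p_1,\dots,p_k)=\gamma(g\cdot p;g\cdot p_1,\dots,g\cdot p_k)$ in $P$, which is precisely the compatibility that is not assumed. Concretely, take $G=\mathbb{Z}/2$, let $P$ have $P(1)$ the free monoid on $\{a,b\}$ and $P(k)=\varnothing$ otherwise, with $G$ acting by the set involution that swaps $a$ and $b$ and fixes all other words; with $S=\{\ast\}$, $n=1$, $f(\ast)=a$, one gets $h\bigl((g,\ast)\circ(g,\ast)\bigr)=bb$ while $g\cdot h\bigl((e,\ast)\circ(e,\ast)\bigr)=g\cdot(aa)=aa$, so $h$ is not a morphism of $G$-operads and the correspondence breaks.

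The repair, and what the paper actually intends, is that the free $G$-operad on $S$ in arity $n$ must \emph{freely interleave} the two kinds of structure: it is the free algebra for the coproduct of the free-operad monad and the levelwise $G\times(-)$ monad. Equivalently, the theory is obtained from $\T_\oper$ by adjoining, at each sort $m$, unary operations for the elements of $G$ subject only to the group relations and to \emph{no} relations against the operadic operations---this is the modification ``as in \cite[3.3.5.h]{borceux2} for $G$-sets'' that the paper cites, and it produces morphisms that are arbitrary alternating composites of operadic operations and group elements, hence free objects strictly larger than yours. Once the free objects are corrected, the remainder of your argument---the standard correspondence between algebras and product-preserving functors, and the reduction of the simplicial statement to the set-level one using that products in $\sset$ are computed levelwise and that $G$ is discrete---goes through unchanged. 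As written, though, your construction establishes the proposition for the category of operads with $G$ acting by operad automorphisms, which is not the category appearing in the statement.
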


\section{Comparison with a simpler model}

Just as the theory $\T_\oper$ could be ``replaced" in some sense by the category $\omop$, our goal in this section is to find a simpler category which can take the place of $\Tgop$.

We begin by recalling the definition of Bousfield-Segal groups from \cite{addinverse}. There, we considered \emph{Segal premonoids}, or functors $X \colon \delo \rightarrow \sset$ such that $X_0 = \Delta[0]$.  There are model structures $\SSp_{*,f}$ and $\SSp_{*,R}$ given by reducing the projective and Reedy model structures on the category of simplicial spaces.

In ${\bf \Delta}$, consider the
maps $\gamma^k:[1] \rightarrow [n]$ given by $0 \mapsto 0$ and $1
\mapsto k+1$ for all $0 \leq k < n$.  Restricting to Segal
premonoids, we can define the \emph{Bousfield-Segal map} $\psi_n:X_n
\rightarrow (X_1)^n$ induced by these maps. The models for
simplicial groups in this sense are the Segal premonoids for which the Bousfield-Segal maps are weak
equivalences of simplicial sets for all $n \geq 2$.  We call such
simplicial spaces \emph{Bousfield-Segal groups}.

To motivate this definition, we explain the situation briefly in the case when the maps $\psi_n$ are \emph{isomorphisms}, rather than weak equivalences.
Considering dimension two, the inclusion
\[ \Delta[1] \vee \Delta[1] = ( \, \overset{2}\bullet \leftarrow \overset0\bullet \rightarrow \overset1\bullet) \hookrightarrow \Delta[2] \]
incudes an isomorphism\footnote{To generalize to the groupoid case where $X_0 \neq *$, one should use the identification \[\map(\, \overset{2}\bullet \leftarrow \overset0\bullet \rightarrow \overset1\bullet, X) \cong X_1 \,_{d_1}\!\times_{d_1} X_1.\]}
\[ (a,b) \in X_1 \times X_1 = \map(\, \overset{2}\bullet \leftarrow \overset0\bullet \rightarrow \overset1\bullet, X) \overset\cong\leftarrow \map(\Delta[2], X) = X_2 \ni x \]
and we end up with a new element $[a,b] = d_0(x)$ as in Figure~\ref{F:boustwo}.

\begin{figure}
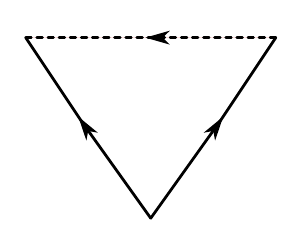
\caption{Binary operation defined by $d_0$}\label{F:boustwo}
\end{figure}
In the case of strict Segal monoids, one turns to dimension three to establish associativity. In our case, the dimension three picture (Figure~\ref{F:bousthree}) gives us a different compatibility relation.
\begin{figure}
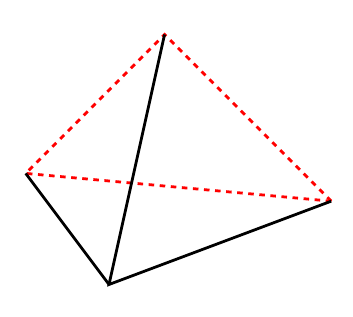
\caption{Labellings of two red edges come from $d_2$ and $d_0$}\label{F:bousthree} \end{figure}
Namely, the simplicial identity $d_0d_1 = d_0d_0$ tells us that $[a,b] = [[a,c],[b,c]]$, as one can see graphically from Figures~\ref{F:bousthree} and \ref{F:bousrelation}.
\begin{figure}
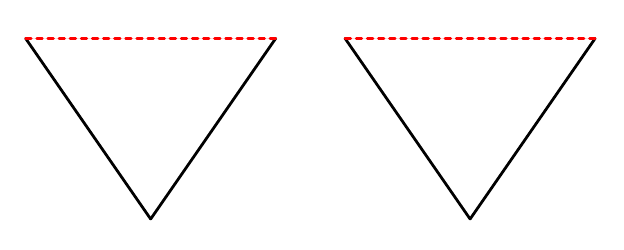
\caption{The edge $\overset2\bullet \to \overset3\bullet$ obtained in two ways}\label{F:bousrelation}
\end{figure}
In the case when $X_0 = *$, let $e=s_0(*)$ and we have the following set of relations, the last two of which the reader may check using the simplicial identities:
\begin{itemize}
\item $[a,b] = [[a,c],[b,c]]$,
\item $[a,e]=a$ (using $s_0(a) = (a,e)$), and
\item $[a,a]=e$ (using $s_1(a) = (a,a)$).
\end{itemize}
A binary operation satisfying these conditions is the same as a group with $[a,b]=ab^\inv$, as observed in \cite{mhall}. If we drop the condition that $X_0 = *$, but still require $X_0$ to be discrete, we exactly recover groupoids.

Any simplicial set $K$ can be regarded as a simplicial space $\delo \rightarrow \sset$ by a constant diagram, with simplicial set $K$ at level $n$ for all $n \geq 0$; we still denote this simplicial space by $K$.
However, we can also regard it as constant in the opposite simplicial direction by taking the set $K_n$ as a constant simplicial set at level $n$.  We denote by $K^t$ this ``transposed" simplicial space, which we use in the following definitions.  To give a localized model structure on reduced simplicial spaces, we define for each $k \geq 2$
the simplicial space
\[ Star(k)^t_* = \bigcup_{i=1}^{k-1} \gamma^i \Delta[1]^t_* \subseteq
\Delta[k]^t_*. \]  Then, as in the previous situations, define the
map
\[ \psi_* \colon  \coprod_{k \geq 1} Star(k)^t_* \rightarrow \Delta [k]^t_*.
\]

\begin{prop} \cite[6.1]{addinverse}
Localizing the model category structure $\SSp_{*,f}$ with respect
to the map $\psi_*$ results in a model category structure
$\mathcal L_B \SSp_{*,f}$ whose fibrant objects are reduced
Bousfield-Segal groups.  There is also an analogous model
structure $\mathcal L_B \SSp_{*,R}$.
\end{prop}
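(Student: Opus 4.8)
The plan is to realize $\mathcal{L}_B \SSp_{*,f}$ as the left Bousfield localization of $\SSp_{*,f}$ at the set $\class$ consisting of the maps $Star(k)^t_* \hookrightarrow \Delta[k]^t_*$ for $k \geq 2$, and then to identify the fibrant objects of the localization, namely the $\class$-local objects, with the reduced Bousfield-Segal groups by a direct computation of homotopy function complexes. The existence follows the same template as Proposition~\ref{P:blexist}: the projective model structure $\SSp_{*,f}$ is cellular, so by \cite[4.1.1]{hirschhorn} it suffices to check that it is left proper. For this I would argue exactly as before. Given a cofibration $A \to B$ and a map $A \to C$ in $\SSp_*$, the pushout formed in the unreduced category is again reduced, since at level $0$ it is $* \amalg_* *=*$, so it agrees with the pushout in $\SSp_*$; left properness then descends from that of the unreduced projective structure, which holds because $\sset$ is left proper.

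Next I would identify the fibrant objects. Since $\SSp_{*,f}$ is a simplicial model category and each $\Delta[k]^t_*$ and $Star(k)^t_*$ is cofibrant, being built from representables, the homotopy function complexes out of them into a fibrant $W$ may be computed as honest mapping spaces. The transposed simplex satisfies $\map(\Delta[k]^t_*, W) \cong W_k$, the defining property of the transposed representable in the reduced setting. Because $Star(k)^t_*$ is assembled as a union of copies of $\Delta[1]^t_*$ glued along the vertex $0$, which in a reduced object maps to the single point $W_0=*$, the mapping space out of it is the corresponding product $\map(Star(k)^t_*, W) \cong (W_1)^k$, and the restriction map
\[ \map(\Delta[k]^t_*, W) \to \map(Star(k)^t_*, W) \]
is precisely the Bousfield-Segal map $\psi_k \colon W_k \to (W_1)^k$ induced by the edges $\gamma^i$. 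Hence $W$ is $\class$-local if and only if every $\psi_k$ is a weak equivalence, which is exactly the condition that $W$ be a reduced Bousfield-Segal group.

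For the Reedy statement I would run the identical localization argument on $\SSp_{*,R}$: the Reedy structure is also cellular and left proper (the pushout argument above is insensitive to which cofibrations one uses), so \cite[4.1.1]{hirschhorn} again produces $\mathcal{L}_B \SSp_{*,R}$. Since $\del$ is a genuine Reedy category, the identity functor is a Quillen equivalence $\SSp_{*,f} \rightleftarrows \SSp_{*,R}$, every Reedy fibration being a levelwise fibration as in \cite[15.3.11]{hirschhorn}; these two structures share the same weak equivalences, so their homotopy function complexes, and hence their $\class$-local objects, coincide. The identity Quillen equivalence therefore descends to the localizations by \cite[3.3.20]{hirschhorn}, and the fibrant objects of $\mathcal{L}_B \SSp_{*,R}$ are again the reduced Bousfield-Segal groups.

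The main obstacle is the middle step: verifying that the comparison map $\map(\Delta[k]^t_*, W) \to \map(Star(k)^t_*, W)$ really is the Bousfield-Segal map, and in particular pinning down the mapping space out of $Star(k)^t_*$ correctly. This requires care with the transposed and reduced structures, namely identifying $\map(\Delta[k]^t_*, W)$ with $W_k$, checking that gluing the edges $\gamma^i \Delta[1]^t_*$ along the basepoint vertex turns the mapping space into a strict product so that no derived correction is needed, and confirming that the edges selected in the definition of $Star(k)$ recover $\psi_k$ up to weak equivalence. The detailed verification is carried out in \cite[6.1]{addinverse}.
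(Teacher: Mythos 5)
The paper does not actually prove this statement---it quotes it from \cite[6.1]{addinverse}---but your reconstruction is correct in outline and follows exactly the template the paper itself uses for the dendroidal analogue, Proposition \ref{P:blexist}: existence of the localization via \cite[4.1.1]{hirschhorn} once left properness of the reduced category is checked by forming the pushout in the unreduced category, followed by identification of the local objects by computing homotopy function complexes out of the localizing maps.

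The one step whose justification would fail as stated is the claim that $Star(k)^t_*$ is projective-cofibrant ``being built from representables.'' That principle is false in the unreduced projective structure: $Star(k)^t$ is a union of representables, yet it is not projective-cofibrant. Indeed, let $E$ be the nerve of the contractible groupoid on two objects $a \neq b$, and let $W$ be the simplicial space obtained from $Star(2)^t$ by thickening the central vertex to the constant simplicial space on $E$, attaching the edge $01$ at the vertex $a$ and the edge $02$ at the vertex $b$. Then $W \to Star(2)^t$ is a levelwise acyclic fibration with no section: a section must send each nondegenerate edge to its unique preimage, and then the common vertex $0$ would have to map simultaneously to $a$ and to $b$. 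What rescues your claim is precisely the reducedness you invoke later for the mapping-space computation: since the gluing vertex becomes the basepoint, $Star(k)^t_*$ is the coproduct in $\SSp_*$ (the levelwise wedge) of $k$ copies of $\Delta[1]^t_*$, that is, a coproduct of reductions of representables, and coproducts of cofibrant objects are cofibrant. With cofibrancy justified this way, your identifications $\map(\Delta[k]^t_*, W) \cong W_k$ and $\map(Star(k)^t_*, W) \cong (W_1)^k$ (via the adjunction between reduction and inclusion, using the simplicial analogue of Proposition \ref{P:identifyrtimes}) and the remainder of the argument, including the Reedy variant, go through as you wrote them. One last nuance: the fibrant objects of $\mathcal{L}_B \SSp_{*,R}$ are the \emph{Reedy-fibrant} reduced Bousfield--Segal groups, a condition strictly stronger than levelwise fibrancy, so ``again the reduced Bousfield--Segal groups'' should be read with that adjustment.
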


\newcommand{\delom}{\mathbf{\Delta \circlearrowright \Omega}}
\newcommand{\bi}[2]{\ensuremath{  \left[ #1 \circlearrowright #2       \right]      }}
\newcommand{\opact}{{\mathcal{O}p{A}ct}}
\newcommand{\pullback}[2]{\,_{#1}\!\times_{#2}}

We now recall the category $\delom$ from \cite{bergnerhackney} which models actions of categories on colored operads. Briefly, an \emph{action} of a category $\mcc$ on a colored operad $P$ is an action of $\mcc$ on the set $\mor(P)$ of morphisms of $P$ satisfying three additional compatibility conditions. Category actions on sets are defined in the same way as groupoid actions: we have a moment map $\mu: \mor P \to \ob \mcc$ and an action map
\[ \bullet: \mor(\mcc) \pullback{s}{\mu} \mor(P) \to \mor(P) \]
which satisfy
\begin{equation*}
\mu(f\bullet g) = t(f), \qquad \bullet(\id,\bullet) = \bullet(\circ,\id), \text{ and} \qquad \id_{\mu(g)} \bullet g = g.
\end{equation*}
The additional axioms are $\mu(\gamma(g; g_1, \dots, g_k)) = \mu(g)$ where $\gamma$ is the operadic composition, $s(f\bullet g) = s(g)$ as ordered lists of colors of $P$, and $\sigma^* (f\bullet g) = f\bullet (\sigma^* g)$  for $\sigma$ a permutation.

The category $\delom$ is a full subcategory of $\mathcal{COA}ct$, the category whose objects are categories acting on colored operads. Objects in $\delom$ are of the form $\bi{n}{R}$ where $n\geq 0$ and $R$ is a tree or the empty set. Such an object is characterized by the following: if the category $\mcc$ acts on a colored operad $P$, then the set
\[ \Hom_{{\mathcal{COA}ct}}(\bi{n}{R}, \mcc
\circlearrowright P)\]
is given by maps
\begin{align*} \alpha: [n] &\to \mcc \\
\beta: R &\to P
\end{align*}
such that $\alpha(0) = \mu (\beta(r))$, where $r$ denotes the root of $R$.
We write $\SSets^{\delom^{op}}$ for the category of functors $\delom^{op} \rightarrow \SSets$, which admits a projective model structure $\SSets^{\delom^{op}}_f$ and a generalized Reedy model structure $\SSets^{\delom^{op}}_R$ per \cite{bergnerhackney}.

The appropriate set of maps for localizing these model structures can be obtained by combining the Segal maps on $\om$ with the Bousfield-Segal maps on $\del$ in the following way. We have maps
$\gamma^k: \bi{1}{\varnothing} \to \bi{n}{R}$ defined by sending $0\mapsto 0$ and $1\mapsto k+1$, for $0\leq k < n$. As in Definition~\ref{D:segalcore}, we have, for each vertex $v$ of $R$ a map
\[ \zeta^v: \bi{0}{C_v} \to \bi{n}{R}. \] If $e$ is the output edge of $v$, then this map sends the vertex of $C_v$ to $v$ and $0$ to $\mu(e)$.

For brevity, in the following definition we denote by
\[ rep\bi{n}{R} = \Hom_\delom(-, \bi{n}{R})\] the representable functor.
\begin{defn}
A \emph{Segal core} of $rep\bi{n}{R} = \Hom_\delom(-, \bi{n}{R})$ is
\[ Sc\bi{n}{R} = \left(\bigcup_{k=0}^{n-1} \gamma^k rep\bi{1}{\varnothing} \right) \cup \left(\bigcup_{v\in  V(R)} \zeta^v rep\bi{0}{C_v} \right) \subseteq rep\bi{n}{R} \]
\end{defn}

Denote by $\SSets_*^{\delom^{op}}$ the category whose objects are those functors $X: \delom^{op} \to \SSets$ such that $X(\bi{0}{\varnothing}) = \Delta[0]$.
We localize this category by the reductions of the above core inclusions and denote the resulting localized model structure by $\LSSets^{\delom^{op}}$.
There is a functor $\delom \to \Tgop^{op}$ given by $\bi{n}{R} \mapsto (F_n, F_n \times J(R))$ where $F_n$ is the free group on $n$ generators and $J(R)$ is the operad generated by the vertices of $R$.  This induces a functor $\LSSets^{\Tgop}_* \rightarrow \LSSets^{\delom^{op}}_*$, which has a left adjoint given by left Kan extension.

The proof of the following result can be obtained by combining the arguments for Bousfield-Segal groups and the one for Segal operads as given earlier in this paper.

\begin{thm}
The adjoint pair
\[ \LSSets^{\Tgop}_{*,f} \leftrightarrows \LSSets^{\delom^{op}}_{*,f} \]
is a Quillen equivalence of model categories.
\end{thm}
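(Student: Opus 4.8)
The plan is to transcribe the proof of the Segal-operad comparison theorem into the combined setting, replacing $\om$ by $\delom$, $\T_\oper$ by $\Tgop$, and the functor $J$ by the functor $\Phi\colon \delom^{op} \to \Tgop$ with $\Phi\bi{n}{R} = (F_n, F_n \times J(R))$. Write $\Phi^*\colon \SSets^{\Tgop} \to \SSets^{\delom^{op}}$ for precomposition with $\Phi$ and $\Phi_!$ for its left adjoint, the left Kan extension; as in the operad case these restrict to the reduced subcategories. The first task is to record the evident analogues of Propositions~\ref{P:lke}, \ref{P:lknerve}, and \ref{P:pullbackofhom} and of Corollary~\ref{C:splitsc}: that $\Phi_!$ sends the reduced representable $rep\bi{n}{R}_*$ to $\Hom_{\Tgop}((F_n, F_n\times J(R)), -)$; that it sends the reduced Segal core $Sc\bi{n}{R}_*$ to the coproduct $\coprod \Hom_{\Tgop}(T_{m_i}, -)$ indexed by the $n$ copies of $\bi{1}{\varnothing}$ (each a group generator of sort $-1$) and the vertices $v \in V(R)$ (each an operad generator of arity $|v|$); and that $\nerve(F_n, F_n \times J(R)) \cong \Phi^* \Hom_{\Tgop}((F_n, F_n\times J(R)), -)$ for the evident nerve functor into $\SSets^{\delom^{op}}$. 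These are computed just as before, the only new bookkeeping being the group coordinate $F_n$, governed by the description of the coproduct in $\nsgo$ given earlier. Under these identifications $\Phi_!$ carries each reduced Segal-core inclusion to the product-projection map $\coprod \Hom(T_{m_i}, -) \to \Hom(T_M, -)$, since $\Phi\bi{n}{R}$ decomposes as the product of the generators just listed.

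Given this, the proof that $(\Phi_!, \Phi^*)$ is a Quillen pair for the localized structures is immediate from \cite[3.3.20(1)(a)]{hirschhorn}, since $\Phi_!$ takes the localizing set on $\delom^{op}$ into the localizing set $\left\{ \coprod \Hom(T_{m_i}, -) \to \Hom(T_M, -) \right\}$ on $\Tgop$. To promote this to a Quillen equivalence I would apply \cite[1.3.16]{hovey}. First, $\Phi^*$ reflects weak equivalences between fibrant objects: fibrant objects are the local objects, between which local equivalences are ordinary weak equivalences \cite[3.2.13]{hirschhorn}, and weak equivalences in $\LSSets^{\Tgop}_*$ are detected objectwise. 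Each object $(F_n, F_n\times P_{n_0,\dots,n_k})$ of $\Tgop$ equals $\Phi\bi{n}{R}$ for a tree $R$ whose sub-corollas realize the free operad $P_{n_0,\dots,n_k}$ — the existence statement underlying Proposition~\ref{P:filtration} — so $A(T_M) \to B(T_M)$ is identified with $(\Phi^*A)\bi{n}{R} \to (\Phi^*B)\bi{n}{R}$, and the reflection argument proceeds as in the operad case. Second, one must show that for cofibrant $X$ the unit $X \to \Phi^* L_2 \Phi_! X$ is a local equivalence, the analogue of Lemma~\ref{L:keyweneed}. Writing $X$ as a homotopy colimit over $\delo$ of coproducts of reduced representables $rep\bi{n_i}{R_i}_*$, performing the localization inside the homotopy colimit \cite[4.1]{simpmon}, using that the left adjoint $\Phi_!$ commutes with homotopy colimits, and invoking the nerve identification above reduces this to the assertion that the fibrant replacement of each $rep\bi{n_i}{R_i}_*$ is $\nerve \Phi\bi{n_i}{R_i}$.

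The hard part is precisely that assertion, the combined filtration result analogous to Proposition~\ref{P:filtration}: in $\LSSets^{\delom^{op}}$ the fibrant replacement of $rep\bi{n}{R}_*$ is weakly equivalent to $\nerve(F_n, F_n \times J(R))$. This is where the two strands must be fused. The tree direction is handled by the Segal-operad filtration of Proposition~\ref{P:filtration}, which builds the free operad $J(R)$ from the sub-corollas $C_v$, while the simplicial direction is handled by the Bousfield-Segal filtration of \cite{addinverse}, which builds the free group $F_n$ from the $n$ copies of $\bi{1}{\varnothing}$. Because $\bi{n}{R}$ glues the linear object $[n]$ to $R$ along the root via the condition $\alpha(0) = \mu(\beta(r))$, the filtration of $rep\bi{n}{R}_*$ must interleave Bousfield-Segal cells with Segal-operad cells while respecting this gluing, ordering the attaching maps so that each successive inclusion is a local equivalence and the colimit is $\nerve(F_n, F_n\times J(R))$. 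I expect this combinatorial interleaving, rather than any new homotopy-theoretic ingredient, to be the principal obstacle, and, as with Proposition~\ref{P:filtration}, it would be carried out in a dedicated technical section.
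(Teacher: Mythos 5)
Your proposal is correct and takes essentially the same approach as the paper: the paper's entire proof is the remark that the result ``can be obtained by combining the arguments for Bousfield-Segal groups and the one for Segal operads as given earlier in this paper,'' and your outline --- the comparison functor $\bi{n}{R} \mapsto (F_n, F_n\times J(R))$, the analogues of the Kan-extension computations, the Quillen pair via \cite[3.3.20]{hirschhorn}, the equivalence via \cite[1.3.16]{hovey}, and a combined filtration playing the role of Proposition~\ref{P:filtration} --- is precisely that combination spelled out. You also correctly identify that the only genuinely new work lies in the interleaved filtration of $rep\bi{n}{R}_*$ building $\nerve(F_n, F_n\times J(R))$, which is exactly where the two cited arguments must be fused.
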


Notice that if we restrict to the case where $R=\eta$, we recover the comparison between simplicial groups and Bousfield-Segal groups.  If we restrict to the case where $n=0$, we recover the comparison between simplicial operads and Segal operads.

In the case of operads equipped with an action by a discrete group $G$, we form the product category $G\times \om$, where $G$ is regarded as a category with objects given by the elements of $G$ and no non-identity maps.  These objects form a generalized Reedy category since $G$ is a generalized Reedy category and the class of generalized Reedy categories is closed under finite products.  Localizing with respect to the set of maps \eqref{E:class}, the fibrant objects of $\LSSets_*^{(G\times \om)^{op}}$ should be regarded as Segal operads equipped with an action of $G$. Moreover, if we let $J_G(R)$ be the free $G$-operad on the vertices of $R$, there is a functor $G\times \om \to \mathcal T_{\goperad}^{op}$ which sends an object $*\times R$ to $J_G(R)$, and a morphism $(g,f): *\times R \to * \times S$ to $g\circ J_G(f)$.

\begin{thm}
The adjoint pair
\[ \LSSets^{\mathcal T_{\goperad}}_{*,f} \leftrightarrows \LSSets^{(G\times \om)^{op}}_{*,f} \]
is a Quillen equivalence of model categories.
\end{thm}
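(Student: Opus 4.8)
The plan is to run, $G$-equivariantly, the argument that proves $\el\dspace_{*,f}$ is Quillen equivalent to $\el\sset_*^{\T_\oper}$. Write $\Phi\colon (G\times\om)^{op}\to\mathcal T_\goperad$ for the functor opposite to the one described above (so $*\times S\mapsto J_G(S)$); it induces the precomposition functor $\Phi^*\colon\sset^{\mathcal T_\goperad}\to\sset^{(G\times\om)^{op}}$ and its left adjoint $\Phi_!$ (left Kan extension), and these restrict to the reduced subcategories to give the pair $(\Phi_!,\Phi^*)$ of the statement. The guiding observation is that because $G$ is discrete the $G$-direction carries no up-to-homotopy group structure: in contrast with the simplicial group case built on $\delom$, the localizing maps are only the $G$-equivariant operadic Segal cores $Sc(*\times S)_*\hookrightarrow rep(*\times S)_*$, and no Bousfield-Segal maps appear. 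So this is essentially the equivariant analogue of the Segal operad comparison, and I would establish $G$-equivariant versions of Proposition~\ref{P:lke}, Corollary~\ref{C:splitsc}, Proposition~\ref{P:filtration}, and Proposition~\ref{P:pullbackofhom}, then feed them into the criterion \cite[1.3.16]{hovey}.

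First I would verify that $(\Phi_!,\Phi^*)$ is a Quillen pair for the levelwise structures: since $(\Phi^*W)_{*\times S}=W(J_G(S))$ and both sides have fibrations and weak equivalences detected levelwise, $\Phi^*$ preserves (acyclic) fibrations, and \cite[8.5.3]{hirschhorn} applies. Next, computing $\Phi_!$ on reduced representables and splitting the Segal core over the vertices of $S$ yields the identifications
\[ \Phi_!\, rep(*\times S)_* \cong \Hom_{\mathcal T_\goperad}(J_G(S),-), \qquad \Phi_! Sc(*\times S)_* \cong \coprod_{v\in V(S)} \Hom_{\mathcal T_\goperad}(T_{\set{v}},-), \]
the analogues of Proposition~\ref{P:lke} and Corollary~\ref{C:splitsc}. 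These exhibit the image under $\Phi_!$ of the localizing set as the set $\class'=\{\coprod\Hom(T_{m_i},-)\to\Hom(T_M,-)\}$ for $\mathcal T_\goperad$-algebras, so by \cite[3.3.20(1)(a)]{hirschhorn} the pair remains a Quillen pair after localizing both sides.

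To upgrade this to a Quillen equivalence via \cite[1.3.16]{hovey}, I would first observe that $\Phi^*$ reflects weak equivalences between fibrant (local) objects: every object of $\mathcal T_\goperad$ is $J_G(S)$ for some tree $S$, and weak equivalences of local $\mathcal T_\goperad$-algebras are detected on the values $W(J_G(S))=(\Phi^*W)_{*\times S}$, so $\Phi^*A\to\Phi^*B$ a weak equivalence forces $A\to B$ to be one. Second, for cofibrant $X$ I would show the unit $X\to\Phi^* L_2\Phi_! X$ is a local equivalence exactly as in Lemma~\ref{L:keyweneed}: resolve $X$ as a homotopy colimit over $\delo$ of coproducts of reduced $G$-representables, use that localization commutes with the homotopy colimit \cite[4.1]{simpmon} together with the equivariant filtration result below, and finish with the equivariant form of Proposition~\ref{P:pullbackofhom}, which holds because the bijection $I$ of Definition~\ref{D:defnofI} is $G$-equivariant.

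The main obstacle is the equivariant analogue of Proposition~\ref{P:filtration}, namely
\[ L_1\, rep(*\times S)_* \simeq \nerve_G(J_G(S)) \]
in the localized Reedy structure on $\sset^{(G\times\om)^{op}}_*$. Proving this means redoing the (already delicate) filtration-by-arity argument inside the product generalized Reedy structure on $G\times\om$ — which exists since $G$ and $\om$ are generalized Reedy and finite products preserve this — where the automorphism group of $*\times S$ is $G\times\Aut(S)$ and the cofibrations are the normal monomorphisms for these larger groups. The filtration should attach cells built from the equivariant Segal cores as before, but one must carry the free $G\times\Aut(S)$-action through every pushout and check that normality is preserved at each stage; this is the one place where the discrete group genuinely enters, and where nearly all of the care is needed. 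Given this input, the rest of the bookkeeping is identical to the non-equivariant case.
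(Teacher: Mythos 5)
Your proposal is correct and follows exactly the route the paper intends: the paper's own proof of this theorem is the single line ``The proof follows just as in the operad case,'' and your plan is precisely that argument made explicit --- the Quillen pair via \cite[8.5.3]{hirschhorn}, the identification of $\Phi_!$ on reduced representables and Segal cores to invoke \cite[3.3.20(1)(a)]{hirschhorn}, and the two conditions of \cite[1.3.16]{hovey} checked via the equivariant analogues of Proposition~\ref{P:filtration}, Lemma~\ref{L:keyweneed}, and Proposition~\ref{P:pullbackofhom}. You also correctly flag the two points where the discrete-group setting matters: the localizing set consists only of Segal cores (no Bousfield--Segal maps, unlike the $\delom$ case), and the filtration argument must be carried out in the generalized Reedy structure on $G\times\om$ where the relevant automorphism groups are $G\times\Aut(S)$.
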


The proof follows just as in the operad case.

\section{Technical results}

\subsection{Proof of Proposition~\ref{P:filtration}}\label{S:filtrprop}

\theoremstyle{plain}
\newtheorem*{filtrprop}{Proposition \ref{P:filtration}}

\begin{filtrprop} Let $P$ be the free operad on the generating set $M=\set{x_1^{j_1}, \dots, x_m^{j_m}}$, where $x^p$ is in arity $p$. If $S$ is any tree whose list of sub-corollas is $C_{j_1}, \dots, C_{j_m}$, then
\[ \Omega[S]_* \simeq \nerve(P) \]
in the localized model structure $\el\dspace_{*,R}$.
\end{filtrprop}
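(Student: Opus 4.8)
The plan is to connect $\Omega[S]_*$ to $\nerve(P)$ through the Segal core, which is exactly where the localization exerts its force. First I would invoke the localizing map directly: the inclusion $Sc[S]_* \hookrightarrow \Omega[S]_*$ lies in $\class$, hence is a weak equivalence in $\el\dspace_{*,R}$, so it suffices to prove $Sc[S]_* \simeq \nerve(P)$. As recorded immediately before the statement, reduction collapses the shared edges of the corollas, turning the union into a coproduct,
\[ Sc[S]_* \cong \coprod_{v\in V(S)} \Omega[C_{|v|}]_* \cong \coprod_{i=1}^m \Omega[C_{j_i}]_*, \]
so that the Segal core is precisely one reduced corolla for each generator of $P$ (and note $P = J(S)$, the free operad on $V(S)$).

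Next I would build the comparison map. Each generator $x_i^{j_i}$ is an element of $\Hom_\coper(C_{j_i}, P) = \nerve(P)_{C_{j_i}}$, and so determines a map $\Omega[C_{j_i}] \to \nerve(P)$; since $P$ has a single color, $\nerve(P)$ is reduced (Remark~\ref{R:reducednerve}), so each such map factors through the reduction, and these assemble to
\[ \Phi\colon \coprod_{i=1}^m \Omega[C_{j_i}]_* \to \nerve(P). \]
The proposition is thereby reduced to showing that $\Phi$ is a weak equivalence in $\el\dspace_{*,R}$. I would first observe that the target is already local. Because a map out of the free colored operad on a tree $R$ is determined freely by its values on the vertices, $\nerve(P)_R = \Hom_\coper(R,P) \cong \prod_{v\in V(R)} P(|v|)$; this is exactly the strict Segal condition, making the Segal map $\nerve(P)_R \to \prod_{v} \nerve(P)_{C_{|v|}}$ an isomorphism. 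Hence $\maph(\Omega[R]_*, \nerve(P)) \to \maph(Sc[R]_*, \nerve(P))$ is a weak equivalence for every $R$, so $\nerve(P)$ is $\class$-local, its discreteness supplying the underlying fibrancy, while the source of $\Phi$ is cofibrant by Proposition~\ref{L:rrnormal}.

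It then remains to show that $\Phi$ is a local equivalence, and this is the technical heart of the argument, carried out by a filtration of $\nerve(P)$ modeled on \cite[4.2]{simpmon}. Representing each element of $P(|v|)$ as an $M$-labeled tree, I would filter $\nerve(P)$ by the complexity (for instance, the total number of vertices) of the labeled trees assigned to the vertices of $R$. The bottom stage is the image of $\Phi$, namely $\coprod_i \Omega[C_{j_i}]_*$, and each successive inclusion is exhibited as a pushout along a coproduct of maps of reduced representables that are local equivalences; these are recognized as such by comparing both ends to the appropriate Segal cores, using the localizing maps of $\class$ together with an induction on tree complexity. Since local equivalences are stable under pushout (by left properness, Proposition~\ref{P:blexist}) and under transfinite composition, it follows that $\Phi$ is a local equivalence, completing the chain $\Omega[S]_* \simeq Sc[S]_* \xrightarrow{\Phi} \nerve(P)$.

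The main obstacle is essentially combinatorial and is the reason the proof is deferred to this section: one must verify that the complexity filtration is a well-defined increasing family of sub-dendroidal-spaces, stable under the structure maps of $\omop$, and that the attaching maps take the asserted form of coproducts of (inductively verified) local equivalences. This is where all of the bookkeeping with trees, their automorphisms, and the normality conditions of Proposition~\ref{L:rrnormal} is concentrated; the operadic flavor of the indexing by $M$-labeled trees is the only substantive departure from the simplicial filtration of \cite[4.2]{simpmon}.
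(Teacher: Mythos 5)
Your proposal is correct and takes essentially the same approach as the paper: reduce along the localizing inclusion $Sc[S]_* \hookrightarrow \Omega[S]_*$, identify the reduced Segal core with a coproduct of reduced corollas sitting at the bottom of a filtration of $\nerve(P)$ by total number of generators used (the paper's $\Psi^n$, generated by the ``primitive'' elements $\wprim_S$ with $|S| \leq n$), and climb the filtration by pushouts of local acyclic cofibrations recognized through Segal-core comparisons. The work you defer as ``essentially combinatorial'' is exactly the content of the paper's Lemma~\ref{L:acyc} (the external boundary inclusions $\partial^{ext}\Omega[S]_* \to \Omega[S]_*$, rather than maps of representables, are the attaching maps, and they are acyclic cofibrations in $\el\dspace_{*,R}$) together with the indexing of the pushouts by the $\Aut(S)$-orbits $\prim_S$; your side observation that $\nerve(P)$ is $\class$-local is true (discreteness gives Reedy fibrancy, and the strict Segal isomorphism gives the mapping-space condition) but is never needed, since the argument concludes with the zig-zag $\Omega[S]_* \simeq Sc[S]_* \cong \Psi^1 \hookrightarrow \colim \Psi^n = \nerve(P)$, in which fibrancy of the target plays no role.
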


Before we prove this proposition, we give a series of lemmas which break down the major parts.

Write $\partial^{ext} \Omega[R]_* \ci \Omega[R]_*$ for the images of the external faces $S \ci R$ where $|S| = |R| -1$. Writing $\inclo: S \to R$ for the inclusion of the subtree, we have
\[ \partial^{ext} \Omega[R]_* = \bigcup \inclo (\Omega[S]_*). \]

\begin{lem}\label{L:acyc}
The inclusion $\partial^{ext} \Omega[S]_* \to \Omega[S]_*$ is an acyclic cofibration in the localized model structure $\el\dspace_{*,R}$.
\end{lem}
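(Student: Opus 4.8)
The inclusion is a cofibration for formal reasons: $\Omega[S]_*$ is normal by Proposition~\ref{L:rrnormal}, and $\partial^{ext}\Omega[S]_*$ is a sub-dendroidal space of it, so the inclusion is a normal monomorphism by Lemma~\ref{L:subthings}, hence a cofibration in $\el\dspace_{*,R}$. The content is therefore the claim that it is a weak equivalence, and the plan is to reduce this to the already-known local equivalence $Sc[S]_* \hookrightarrow \Omega[S]_*$ of \eqref{E:class}. First I would observe that, as long as $S$ has at least two vertices, every sub-corolla $C_v$ is contained in some external face of $S$ (delete an external vertex $w \neq v$), so that $Sc[S]_* \subseteq \partial^{ext}\Omega[S]_*$ and the localizing map factors as
\[ Sc[S]_* \hookrightarrow \partial^{ext}\Omega[S]_* \hookrightarrow \Omega[S]_*. \]
By two-out-of-three it then suffices to prove that the first inclusion $Sc[S]_* \hookrightarrow \partial^{ext}\Omega[S]_*$ is a local weak equivalence. (The corolla case $|V(S)|=1$ is not in scope, since there $Sc[S]_* = \Omega[S]_*$ already while $\partial^{ext}\Omega[S]_*$ is only a point.)

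To analyze the inclusion into $\partial^{ext}\Omega[S]_*$, I would write the latter as a union over the external faces and organize this as a colimit over the poset of their intersections. The key combinatorial point is that any finite intersection of external faces of $S$ is again the reduced representable $\Omega[T]_*$ of a subtree $T \subseteq S$, so that $\partial^{ext}\Omega[S]_*$ is the colimit, over the poset $\mathcal{I}$ of nonempty such intersections, of a diagram $T \mapsto \Omega[T]_*$. Because every map in this diagram is an inclusion of a sub-dendroidal space into a normal object, hence a normal monomorphism by Lemma~\ref{L:subthings}, this colimit computes the homotopy colimit in $\el\dspace_{*,R}$. There is a natural transformation of $\mathcal{I}$-diagrams from $T \mapsto Sc[T]_*$ to $T \mapsto \Omega[T]_*$, each component of which is the localizing equivalence of \eqref{E:class}; applying $\hocolim_{\mathcal{I}}$ therefore yields a local weak equivalence
\[ \hocolim_{\mathcal{I}} Sc[T]_* \overset\simeq\to \hocolim_{\mathcal{I}} \Omega[T]_* = \partial^{ext}\Omega[S]_*. \]
Finally, since each $Sc[T]_* \cong \coprod_{v\in V(T)} \Omega[C_v]_*$ is a coproduct of corollas and every map in the Segal-core diagram is the inclusion of a sub-coproduct, its (homotopy) colimit is $\coprod_{v\in V(S)} \Omega[C_v]_* = Sc[S]_*$, each sub-corolla being accounted for exactly once. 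Composing gives the required local equivalence $Sc[S]_* \to \partial^{ext}\Omega[S]_*$.

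The main obstacle is the combinatorial bookkeeping underlying this homotopy-colimit argument: one must check that intersections of external faces really are reduced representables of subtrees, i.e.\ that the external faces are closed under intersection within the face lattice of $\Omega[S]$; that the resulting $\mathcal{I}$-diagram of Segal cores has colimit exactly $Sc[S]_*$ with the correct identifications of shared corollas; and that the colimits in question are genuinely homotopy colimits, which is where normality (Proposition~\ref{L:rrnormal}) and Lemma~\ref{L:subthings} enter, guaranteeing that all structure maps are cofibrations. An alternative and perhaps cleaner packaging of the same idea is an induction on the number of vertices of $S$, attaching the external faces one at a time along normal monomorphisms and invoking left properness of $\el\dspace_{*,R}$ (established in Proposition~\ref{P:blexist}) at each pushout, with the inductive hypothesis applied to the smaller faces and their intersections whose vertex counts strictly decrease.
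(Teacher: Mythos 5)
Your proof is correct in substance, and it shares its skeleton with the paper's: both get the cofibration part from Proposition~\ref{L:rrnormal} and Lemma~\ref{L:subthings}, both reduce by two-out-of-three against the localizing map $Sc[S]_*\hookrightarrow\Omega[S]_*$ to showing that $Sc[S]_* \hookrightarrow \partial^{ext}\Omega[S]_*$ is a local equivalence (using that every corolla lies in some external face once $S$ has at least two vertices), and both then compare Segal core with external boundary by a gluing argument. Where you diverge is the gluing step, and your version is the more careful one: the paper asserts that the single square
\[
\xymatrix{ \coprod_R Sc[R]_* \ar@{->}[r] \ar@{->}[d] & Sc[S]_* \ar@{->}[d] \\ \coprod_R \Omega[R]_* \ar@{->}[r] & \partial^{ext}\Omega[S]_* }
\]
(coproducts over the external faces $R$) is a pushout and concludes by cobase change of the acyclic cofibration on the left. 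That square is a pushout only if distinct external faces meet inside the Segal cores, which fails in general: for the linear tree $S=[4]$ the two external faces share the two-vertex subtree spanned by $v_2,v_3$, and the dendrex $\{v_2,v_3\}\hookrightarrow S$ lies in neither Segal core, so the pushout counts it twice while the union counts it once. Your intersection-poset homotopy colimit --- or equivalently your inductive one-face-at-a-time variant using left properness --- is exactly what repairs this, since indexing over all intersections records the overlaps that the one-step pushout ignores; so the extra bookkeeping you apologize for is not optional, it is where the content lies. Of the checks you flag, the combinatorial ones go through cleanly: intersections of external faces really are subtrees (possibly $\eta$), because any two subtrees of a tree meet in a connected subgraph (unique paths), and both the representable and the Segal-core diagrams are intersection-closed families of subobjects, so their colimits are the unions $\partial^{ext}\Omega[S]_*$ and $Sc[S]_*$ respectively. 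One small correction: to conclude that these colimits are homotopy colimits it is not enough that the individual structure maps be cofibrations; you need Reedy cofibrancy over the poset (a direct category, graded by vertex count), i.e.\ that each latching map $\bigcup_{T'\subsetneq T}\Omega[T']_* \to \Omega[T]_*$ is a cofibration --- but these latching maps are again monomorphisms into normal objects, so the same Lemma~\ref{L:subthings} supplies this. Finally, your exclusion of corollas is right and necessary: for $S=C_n$ the map in question is $\ast\to\Omega[C_n]_*$, which is not a local equivalence; this is harmless because the lemma is only invoked, in the proof of Proposition~\ref{P:filtration}, for trees with at least two vertices, and the paper's own rewriting of $Sc[S]_*$ as a union over external faces tacitly assumes the same restriction.
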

\begin{proof}

By Lemmas~\ref{L:subthings} and \ref{L:rrnormal}, $Sc[S]_* \to \Omega[S]_*$ and $\partial^{ext} \Omega[S]_* \to \Omega[S]_*$ are cofibrations. Since $Sc[S]_* \to \Omega[S]_*$ is an acyclic cofibration in the localized model structure, it suffices to show that $Sc[S]_* \to \partial^{ext} \Omega[S]_*$ is a weak equivalence.

We can rewrite the usual definition
\[ Sc[S]_* = \bigcup_{v\in V} \inclo\Omega[C_{n(v)}]_* \ci \Omega[S]_* \]
as
\[ Sc[S]_* = \bigcup_R \inclo Sc[R]_* \ci \Omega[S]_* \]
where $R$ ranges over all proper subtrees of $S$, which is equal to the union over all proper subtrees with $|R| = |S|-1$. We claim that
\[ Sc[S]_* = \bigcup_R \inclo Sc[R]_* \to \bigcup_R \inclo \Omega[R]_* \] is an acyclic cofibration.  To establish this claim, we note that the diagram
\[ \xymatrix{ \coprod_R Sc[R]_* \ar@{->}[r] \ar@{->}[d] & \bigcup_R \inclo Sc[R]_* \ar@{->}[d]\\
\coprod_R \Omega[R]_* \ar@{->}[r] & \bigcup_R \inclo \Omega[R]_*,
}\]
is a pushout, and the left hand map is an acyclic cofibration so that the right hand map $Sc[S]_* \to \partial^{ext} \Omega[S]_*$ is also an acyclic cofibration.
\end{proof}

If $A$ is a free monoid on generators $a_1, \dots, a_n$, then we can consider $\nerve(A)$ as the simplicial set generated by $r$ simplices of the form
\[ * \overset{a_{j_1}}\longrightarrow * \overset{a_{j_2}}\longrightarrow  \dots \overset{a_{j_r}}\longrightarrow  * \] where all of the arrows are labeled by generators of $A$. Since we can obtain all other simplices of $\nerve(A)$ by applying face and degeneracy maps to simplices of the above form, we call such simplices \emph{primitive}.

We now examine the corresponding situation for the ``primitive dendrices'' of $\nerve(P)$. Primitive simplices are labellings of linear trees by the generating set,
\[ \xymatrix{ \ar@{-}[r] & \overset{a_{j_1}}{\bullet} \ar@{-}[r] & \overset{a_{j_2}}\bullet \ar@{-}[r] & \cdots \ar@{-}[r] & \overset{a_{j_r}}{\bullet} \ar@{-}[r] & } \]
so we expect primitive dendrices to be labels of the vertices of an arbitrary tree. However, note that the existence of tree automorphisms causes some minor complications.

Let $ns\coper$ denote the category of non-symmetric colored operads and consider the adjoint pair
\[ \Sigma \colon ns\coper \rightleftarrows \coper \, : \! U, \]
where $\Sigma$ is the symmetrization functor.
To avoid confusion, we will use the following notation for the remainder of this subsection. If $S$ is a tree and $p$ is a planar structure on $S$, then the planar tree $(S,p)$ generates a \emph{non-symmetric} colored operad which we denote $\om_p(S,p)$. For emphasis, we write $\om(S)$ for $\Sigma\om_p(S,p)$; we have been writing $S$ for this colored operad elsewhere.
Notice that if $(S,p)$ is a planar tree then
\[ \Hom_\coper (\om(S), P)  = \Hom_{ns\coper} (\om_p(S,p), U(P)). \]

\begin{defn}
For each tree $S$, the set \[ \wprim_S \subseteq \nerve(P)_S \] is the subset of all $\alpha: \om(S)\to P$ with the property that there exists a planar representative $(S,p)$ such that the map of non-symmetric operads
\[ \alpha: \om_p(S,p) \to U(P) \]
restricts to a map $\check\alpha: V(S) \to M$ (here $M=\set{x_1^{j_1}, \dots, x_m^{j_m}}$ is the set of generators of $P$).  The group $\Aut(S)$ acts on $\wprim_S$, and we let
\[ \prim_S = \wprim_S/{\Aut(S)}\]
be the coinvariants, which we call the set of \emph{primitive elements} of $S$.
\end{defn}

\begin{rem}
The primitive elements of $S$ are in bijection with the labels of the vertices of $S$ by elements of $M$, subject to the condition that valences match.
\end{rem}

\begin{rem}\label{R:sortingout}
Suppose $\alpha: \om_p(S,p) \to U(P)$ has the property that for each $v\in V(S)$, $\alpha(v) = \tau x_i$ for some $\tau \in \Sigma_{|v|}$. Then there is a planar structure $p'$ on $S$ so that the map $\alpha: \om_p(S,p') \to U(P)$ takes $v$ to $x_i$.
\end{rem}

\begin{lem}\label{L:genned} Consider the inclusion
\[ \coprod_{S} \wprim_S \to \coprod_{S} \nerve(P)_S. \]
The sub-dendroidal set generated by $\coprod_S \wprim_S$ is $\nerve(P)$.
\end{lem}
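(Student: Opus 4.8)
The inclusion $\coprod_S \wprim_S \to \coprod_S \nerve(P)_S$ supplies one containment automatically, so the plan is to show that every dendrex $\alpha \in \nerve(P)_S = \Hom_\coper(\om(S), P)$ lies in the sub-dendroidal set generated by the primitive elements; that is, to produce a tree $T$, a primitive dendrex $\beta \in \wprim_T$, and a morphism $f\colon S \to T$ in $\om$ with $f^*\beta = \beta \circ f = \alpha$. Since $\om(S)$ is the free colored operad on the vertices of $S$, the map $\alpha$ is determined by the elements $\alpha(v) \in P(|v|)$ for $v \in V(S)$, so the entire construction can be carried out one vertex at a time.

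Because $P$ is the free operad on $M = \set{x_1^{j_1},\dots,x_m^{j_m}}$, each element $\alpha(v) \in P(|v|)$ is a formal composite of the generators. Choosing a planar structure $p$ on $S$ and passing to the non-symmetric operad $\om_p(S,p)$ via the adjunction $\Sigma \colon ns\coper \rightleftarrows \coper \colon U$, the element $\alpha(v)$ is represented by a planar tree $T_v$ whose vertices are labelled by generators of $M$ (possibly together with permutations). Grafting the $T_v$ into $S$, that is, substituting $T_v$ for the vertex $v$ so that the $|v|$ leaves of $T_v$ match the incoming edges of $v$, produces a tree $T$. Recording the generator attached to each vertex of $T$ gives a map $\check\beta \colon V(T) \to M$, which, after adjusting planar structures as in Remark~\ref{R:sortingout} to remove the stray permutations, extends to a dendrex $\beta \colon \om(T) \to P$ with $\beta \in \wprim_T$.

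It remains to build the morphism $f \colon S \to T$ collapsing each $T_v$ back to $v$ and to verify $f^*\beta = \alpha$. The map $f$ is assembled from inner face maps, which realize $S$ as $T$ with the internal edges of each grafted $T_v$ contracted and therefore send the merged vertex $v$ to the composite of the generators along $T_v$, together with degeneracy maps handling any unary vertex $v$ with $\alpha(v) = 1 \in P(1)$ the unit, for which $T_v$ is a bare edge absent from $T$. By freeness of $P$, evaluating $\beta \circ f$ at a vertex $v \in V(S)$ recomposes the generators labelling $T_v$, which is exactly $\alpha(v)$; hence $f^*\beta = \alpha$, and $\alpha$ lies in the generated sub-dendroidal set.

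The main obstacle is bookkeeping the symmetric-operad structure: the element $\alpha(v)$ is only an isomorphism class under the $\Sigma_{|v|}$-action, so the planar representatives $T_v$, and hence $T$, are not canonical. This is precisely why the definition of $\wprim_S$ asks only for the \emph{existence} of a planar structure and why $\prim_S$ is the $\Aut(S)$-coinvariants: these choices absorb the permutation ambiguity, and Remark~\ref{R:sortingout} guarantees the vertex labels may be taken to be honest generators rather than permuted ones. One must then confirm that the equality $f^*\beta = \alpha$ is independent of these choices, i.e.\ that it holds as a map of symmetric operads, which follows since both sides agree on vertices after passing back through $\Sigma$.
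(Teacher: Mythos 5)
Your proposal is correct and takes essentially the same route as the paper's proof: both use freeness of $P$ to expand each label $\alpha(v)$ into a generator-labelled tree, realize $\alpha$ as the pullback of a primitive dendrex along inner face maps (for composite labels) and degeneracy maps (for unit labels), and invoke Remark~\ref{R:sortingout} to replace permuted generators by honest ones via a change of planar structure. The only difference is presentational --- the paper spreads the dendrex apart one composition $f \circ_{i+1} g$ at a time, whereas you graft all the trees $T_v$ into $S$ at once.
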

\begin{proof}
Suppose that we have $\beta \in \nerve(P)_S$ and fix a planar representative of $S$. We first show that $\beta$ can be `spread apart'. Suppose that $\beta(v) = f\circ_{i+1} g$. Consider the tree $S'$ which looks like $S$ away from $v$: it has $E(S') = E(S) \sqcup \set{e}$ and $V(S') = (V(S) \setminus \set{v}) \sqcup \set{v_1, v_2}$, and the changes are pictured in Figure~\ref{F:dele}. We then have $\partial^e: S \to S'$. Choosing compatible planar structures on $S$ and $S'$, we can define $\beta': \om_p(S',p') \to P$ on vertices by insisting that $\beta'(v_2) = g$, $\beta'(v_1) = f$, and $\beta'(w) = \beta(w)$ for all other vertices of $S'$. Then $\beta = \beta' \circ \partial^e = \partial_e (\beta')$.
\begin{figure}
\includegraphics[width=0.9\textwidth]{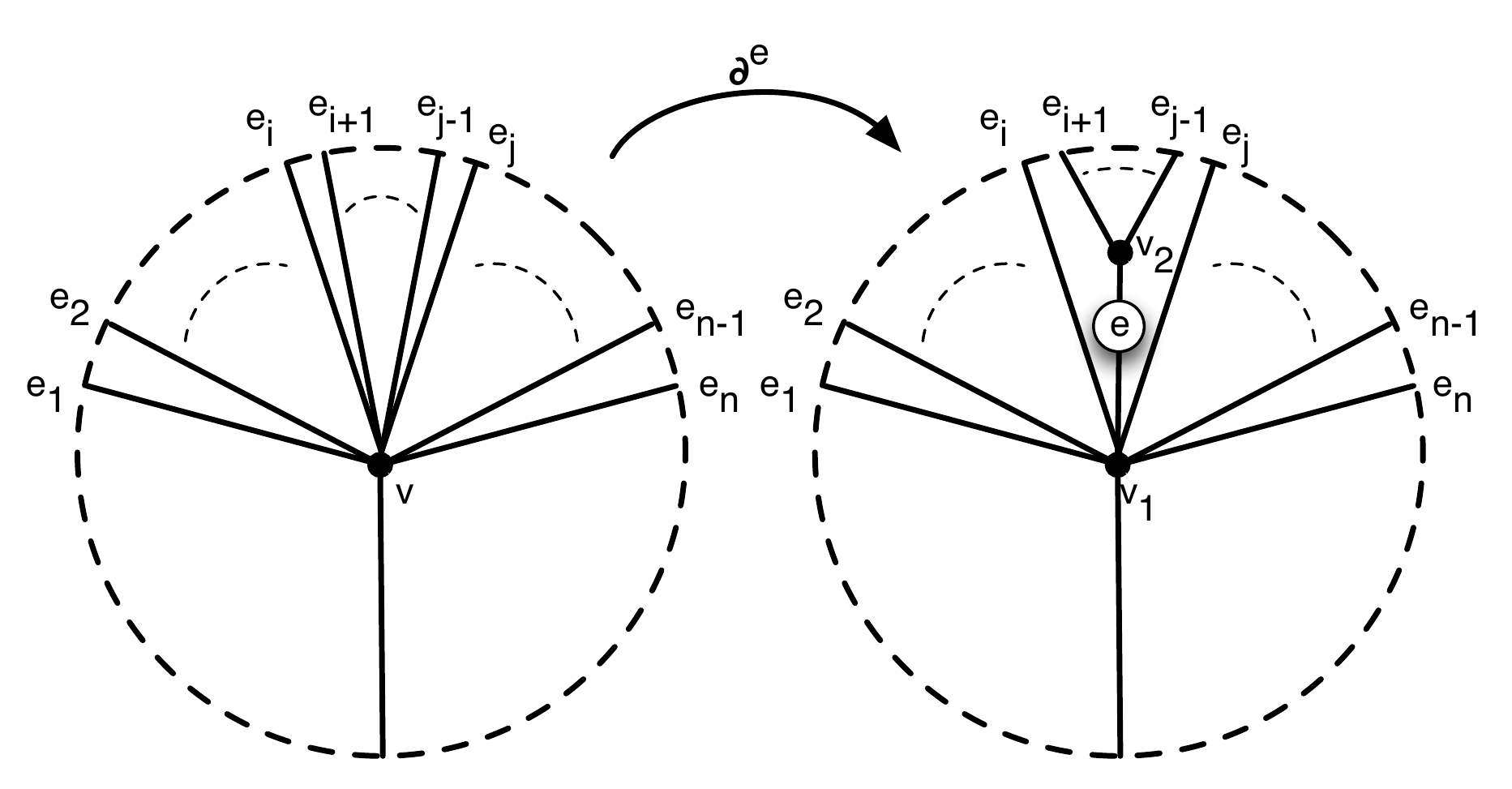}
\caption{an inner coface map}\label{F:dele}
\end{figure}
Iterating, we may assume that for a choice of planar structure $p$, $\beta$ has the property that
 $\beta(v) = \id$ or  $\beta(v) = \tau x_k$ for some $\tau \in \Sigma_{|x_k|}$.

If $\beta(v) = \id$, then $\beta=\sigma_v \beta'$, where $\sigma^v$ is pictured in Figure~\ref{F:upv}.
\begin{figure}
\includegraphics[width=0.5\textwidth]{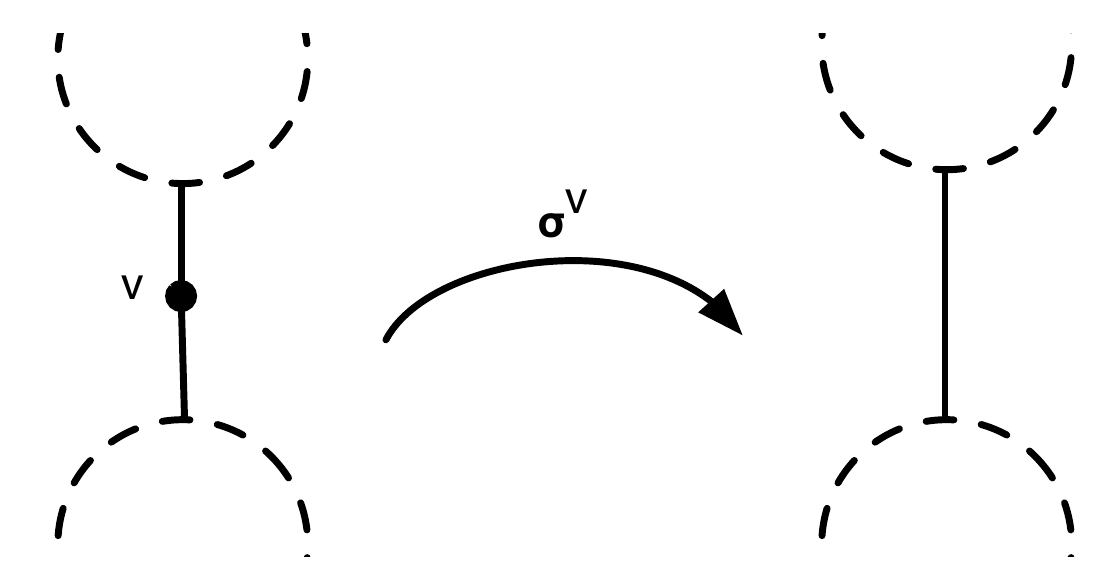}
\caption{a codegeneracy map}\label{F:upv}
\end{figure}
The source of the map $\beta'$ looks like $S$ except that the tree has $v$ omitted; iterating we may assume that $\beta(v) = \tau x_k$ for each vertex $v$.

At this point, we have a map of planar operads $\beta: \om_p(S,p) \to U(P)$ with
$\beta(v) = \tau_v x_{i_v}$ for $v\in V(S)$. The result now follows from Remark~\ref{R:sortingout}.
\end{proof}

Filter $\nerve(P)$ as
\[ \Psi^1 \ci \Psi^2 \ci \dots \ci \nerve(P) \]
by letting $\Psi^n$ be the sub-dendroidal set generated by elements of $\wprim_S$ for $|S|\leq n$.

\begin{lem}
This filtration is exhaustive, i.e.   $\nerve(P) = \bigcup \Psi^n$.
\end{lem}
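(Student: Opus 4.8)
The plan is to deduce this immediately from Lemma~\ref{L:genned}, once we record the right description of ``the sub-dendroidal set generated by a set of dendrices.'' Recall that for a collection $B$ of dendrices of $\nerve(P)$, the sub-dendroidal set generated by $B$ consists precisely of those $\beta$ of the form $\beta = f^* b$ for some $b\in B$ and some morphism $f$ of $\om$. Indeed, this set contains $B$ (take $f = \id$), and it is closed under the structure maps since $g^*(f^* b) = (fg)^* b$ is again of this form; and any sub-dendroidal set containing $B$ must contain all such $f^* b$.

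First I would observe that the subsets $A_n := \bigcup_{|S|\leq n} \wprim_S$ form an increasing sequence whose union is $A := \coprod_S \wprim_S$, and that by definition $\Psi^n$ is the sub-dendroidal set generated by $A_n$. The key step is to show that the sub-dendroidal set generated by $A$ equals $\bigcup_n \Psi^n$. The inclusion $\bigcup_n \Psi^n \subseteq \langle A\rangle$ is clear. For the reverse inclusion, suppose $\beta = f^* b$ with $b\in \wprim_S \ci A$. Since the tree $S$ has only finitely many vertices, we have $b\in A_{|S|}$, and hence $\beta = f^* b \in \Psi^{|S|} \ci \bigcup_n \Psi^n$. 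Thus $\langle A \rangle = \bigcup_n \Psi^n$.

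Finally I would invoke Lemma~\ref{L:genned}, which asserts that the sub-dendroidal set generated by $A = \coprod_S \wprim_S$ is all of $\nerve(P)$. Combining this with the previous paragraph yields $\nerve(P) = \langle A \rangle = \bigcup_n \Psi^n$, as desired. There is no genuine obstacle in this argument; the only point to keep straight is that ``generated by'' means closure under the structure maps, together with the elementary but essential observation that each primitive element lives at a tree with a definite finite number of vertices, so that it is already captured at a finite stage $\Psi^{|S|}$ of the filtration.
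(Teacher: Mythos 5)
Your proof is correct and follows essentially the same route as the paper: invoke Lemma~\ref{L:genned} to write an arbitrary dendrex as $f^*b$ with $b\in\wprim_S$, then observe that $b$ already lies in the stage $\Psi^{|S|}$, so the dendrex does too. Your explicit unwinding of what ``sub-dendroidal set generated by'' means is a harmless elaboration of what the paper leaves implicit.
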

\begin{proof}
We need to show that if $f\in \nerve(P)_S$, then there is an $n$ so that $f\in \Psi^n_S$. By Lemma~\ref{L:genned}, $f=\gamma^* g$ where $\gamma$ is a map in $\Omega$ and $g\in \prim_R$. Then $f\in \Psi^{|R|}_S$.
\end{proof}

\begin{lem}\label{L:subtreefiltration}
Suppose $\alpha \in \wprim_S$ with $|S|=n$ and that $\inclo: R\hookrightarrow S$ is a proper subtree.
Then $\alpha \inclo \in \wprim_R$ has the property that the image of
\[ \alpha \inclo: \Omega[R]_* \to \nerve(P) \] is in $\Psi^{n-1}$.
\end{lem}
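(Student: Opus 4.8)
The plan is to verify the two assertions of the statement separately: first that the restriction $\alpha\inclo$ again lies in $\wprim_R$, and then that the sub-dendroidal set it generates is contained in $\Psi^{n-1}$. The second assertion becomes essentially formal once the first is in hand, together with the observation that a proper subtree has strictly fewer vertices than $S$.

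For the first assertion I would unwind the definition of $\wprim_S$. Since $\alpha \in \wprim_S$, there is a planar representative $(S,p)$ for which the induced map of non-symmetric colored operads $\alpha\colon \om_p(S,p) \to U(P)$ carries each vertex $v\in V(S)$ to a generator $\check\alpha(v)\in M$. A subtree inclusion $\inclo\colon R\to S$ is a composite of external face maps, each of which removes an outer vertex together with its outer edges and hence preserves the valence and local planar data of every vertex it retains; consequently it identifies $V(R)$ with a subset of $V(S)$ in a valence-preserving way. Thus $p$ restricts to a planar structure $p|_R$ on $R$, and $\inclo$ lifts to an inclusion of non-symmetric operads $\om_p(R,p|_R) \to \om_p(S,p)$. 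Composing with $\alpha$ exhibits a planar representative of $\alpha\inclo$ whose restriction to $V(R)\subseteq V(S)$ still lands in $M$, so $\alpha\inclo \in \wprim_R$.

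For the second assertion I would first note that, because $\inclo$ is the inclusion of a \emph{proper} subtree, $V(R)\subsetneq V(S)$, so $|R| < |S| = n$, i.e.\ $|R|\leq n-1$. Next I would identify the image of the map $\alpha\inclo\colon \Omega[R]_* \to \nerve(P)$ with the sub-dendroidal set of $\nerve(P)$ generated by the element $\alpha\inclo\in \nerve(P)_R$: the reduction map $\Omega[R]\to \Omega[R]_*$ is a levelwise surjection, and the composite $\Omega[R]\to \Omega[R]_* \to \nerve(P)$ is the Yoneda map classifying $\alpha\inclo$, whose image is exactly the sub-dendroidal set it generates. By the first assertion $\alpha\inclo$ is one of the elements listed in the definition of $\Psi^{n-1}$, since $\alpha\inclo\in \wprim_R$ with $|R|\leq n-1$. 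Hence the sub-dendroidal set it generates lies inside $\Psi^{n-1}$, which is the claim.

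The only genuinely delicate point is the bookkeeping in the first assertion: one must confirm that a subtree inclusion preserves the valence of each retained vertex and is compatible with a chosen planar structure, so that the witnessing planar map for $\alpha$ really does restrict to a witnessing planar map for $\alpha\inclo$. This is where the distinction between external faces (which give genuine subgraphs and preserve local vertex data) and inner faces (which contract edges and merge vertices) is used; once it is made, the remainder of the argument is purely a matter of tracing through the definitions of $\wprim$ and of the filtration $\Psi^{\bullet}$.
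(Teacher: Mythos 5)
Your proposal is correct and takes essentially the same route as the paper's own (much terser) proof, which likewise rests on the two observations that the image of the map classified by an element of $\wprim_R$ is the sub-dendroidal set it generates, hence lies in $\Psi^{|R|}$, and that $|R| < |S| = n$ for a proper subtree. The only difference is one of detail: the paper treats the fact that $\alpha\inclo \in \wprim_R$ as immediate, while you verify it explicitly by restricting the planar witness along the external face maps.
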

\begin{proof}
Generally, for a tree $S$  and $\alpha \in \wprim_S$, we have that the image of \[ \alpha: \Omega[S]_* \to \nerve(P) \] is in the sub-dendroidal set generated by $\wprim_S$, which is contained in $\Psi^{|S|}$.
Thus the proof is just the observation that since $R$ is a proper subtree, $|R| < |S| = n$.
\end{proof}

Suppose $\alpha \in \wprim_S$ where $|S|=n$ and consider the maps $\inclo^R: R\hookrightarrow S$ for every subtree of $S$ with $|R| = n-1$.
Notice that the maps \[\alpha \inclo^R: \Omega[R]_* \to \Psi^{n-1} \ci \nerve(P)\] from Lemma~\ref{L:subtreefiltration} together give a map
\[ \widetilde{\alpha}: \partial^{ext} \Omega[S]_* \to \Psi^{n-1}. \]

\begin{proof}[Proof of Proposition~\ref{P:filtration}] Recall that we are regarding $\om$ as a skeletal category, so $\ob \om$ is a set.
For each $\bar{\alpha} \in \prim_S$ with $S\in \ob \om$, choose a representative $\alpha \in \wprim_S$.
Observe that
\[ \xymatrix{ \coprod_{|S|=n} \coprod_{\bar\alpha \in \prim_S} \partial^{ext} \Omega[S]_* \ar@{->}[r]^-{\coprod \tilde{\alpha}} \ar@{->}[d] & \Psi^{n-1}\ar@{->}[d] \\
\coprod_{|S|=n}  \coprod_{\bar\alpha \in \prim_S}  \Omega[S]_* \ar@{->}[r]^-{\coprod \alpha} & \Psi^n
 } \]
is a pushout. The left-hand map is an acyclic cofibration by Lemma~\ref{L:acyc}, so $\Psi^{n-1} \to \Psi^n$ is an acyclic cofibration by \cite[7.2.12(2)]{hirschhorn}.

Finally, $\prim^1 = M$ and the only trees with one vertex are the corollas. So
\[ \Psi^1 = \coprod_n \coprod_{\prim_{C_n}} \Omega[C_n]_* \cong Sc[S]_* \] by our assumption on $S$. Thus
\[ \Omega[S]_* \simeq Sc[S]_* \cong \Psi^1 \simeq \hocolim \Psi^i  \simeq \colim \Psi^i = \nerve(P). \]

\end{proof}

\subsection{Proofs of Propositions~\ref{P:lke} and \ref{P:lknerve}}\label{S:lke}

\theoremstyle{plain}
\newtheorem*{lkeprop}{Proposition \ref{P:lke}}

Recall that we defined $J^{op}: \om \to \T_{\oper}^{op}$ in \S\ref{SS:comparison} by sending a tree $S$ to the free operad on $V(S)$. The functor  $J: \om^{op} \to \T_\oper$ is its opposite.

\begin{lkeprop} The left Kan extension of $\Omega[S]_*$ along $J$ is  $\Hom_{\T_\oper} (J(S), -)$.
\end{lkeprop}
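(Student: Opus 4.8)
\section*{Proof proposal}

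The plan is to avoid computing the pointwise colimit that defines the left Kan extension, and instead to pin down $J_!\Omega[S]_*$ by its universal property, playing the defining adjunction $J_!\dashv J^*$ off against the reduction adjunction $r\dashv U$. Morally the answer is forced: $\Omega[S]=\Hom_{\om^{op}}(S,-)$ is a covariant (co)representable functor on $\om^{op}$, and a left adjoint carries corepresentables to corepresentables, so one expects a representable $\Hom_{\T_\oper}(J(S),-)$; the only real content is transferring this statement into the reduced setting. First I would record that $\Hom_{\T_\oper}(J(S),-)$ genuinely lies in $\sset_*^{\T_\oper}$: since $J(\eta)$ is the free operad on $V(\eta)=\varnothing$, we have $J(\eta)=\star$, the terminal object of $\T_\oper$, so $\Hom_{\T_\oper}(J(S),\star)$ is a single point and the functor is reduced. (The same computation shows $J^*W$ is reduced whenever $W$ is, which is what makes the restricted adjunction available.)

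The core of the argument is then a chain of isomorphisms, natural in an arbitrary $W\in\sset_*^{\T_\oper}$:
\begin{align*}
\Hom_{\sset_*^{\T_\oper}}(J_!\Omega[S]_*,\,W)
&\cong \Hom_{\dspace_*}(\Omega[S]_*,\,J^*W) && (\text{adjunction } J_!\dashv J^*)\\
&\cong \Hom_{\dspace}(\Omega[S],\,J^*W) && (r\dashv U,\ \Omega[S]_*=r\Omega[S])\\
&\cong (J^*W)_S = W(J(S)) && (\text{Yoneda for dendroidal spaces})\\
&\cong \Hom_{\sset_*^{\T_\oper}}(\Hom_{\T_\oper}(J(S),-),\,W) && (\text{Yoneda in } \sset^{\T_\oper}).
\end{align*}
Since the composite is natural in $W$, the Yoneda lemma in $\sset_*^{\T_\oper}$ yields the desired isomorphism $J_!\Omega[S]_*\cong\Hom_{\T_\oper}(J(S),-)$.

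The step that requires care, and which I expect to be the only genuine obstacle, is the second isomorphism. One is tempted to apply the unreduced Kan extension formula to $\Omega[S]$ directly, but the \emph{reduced} left adjoint is $r\circ J_!^{\mathrm{unred}}\circ U$, and $U\Omega[S]_*\neq\Omega[S]$ because reduction alters the value on linear trees. The clean way around this is precisely to exploit that $J^*W$ already lies in $\dspace_*$: writing $\Omega[S]_*=r\Omega[S]$ and using $r\dashv U$ converts the reduced mapping set into the unreduced mapping set out of the honest representable $\Omega[S]$, where the dendroidal Yoneda lemma applies with no modification. Everything else is formal. One small bookkeeping point worth flagging in the writeup: since both $\Omega[S]$ and $\Hom_{\T_\oper}(J(S),-)$ are discrete in the simplicial direction, the relevant Yoneda identifications are with $0$-simplices, and this must be applied consistently on both ends of the chain so that the two descriptions of $\Hom(-,W)$ match on the nose.
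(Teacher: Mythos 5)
Your proof is correct, but it takes a genuinely different route from the paper. The paper evaluates the left Kan extension pointwise via the colimit formula over the comma category $J/T_N$, constructs an explicit comparison map $\newPsi$ to $\Hom_\oper(T_N, J(S))$, and verifies bijectivity by hand -- the delicate point there being well-definedness on linear trees, where reduction has collapsed $\Omega[S]_{[n]}$, which the paper handles with an explicit pushout diagram. You instead argue entirely formally: the adjunction $J_!\dashv J^*$ on reduced categories, then $r\dashv U$ to convert $\Hom_{\dspace_*}(r\Omega[S], J^*W)$ into $\Hom_{\dspace}(\Omega[S], UJ^*W)$, then Yoneda twice. Your use of $r\dashv U$ is exactly the structural move that absorbs the linear-tree subtlety the paper deals with by element-chasing, and your flagged checks (that $\Hom_{\T_\oper}(J(S),-)$ and $J^*W$ are reduced, using that $\star$ is terminal in $\T_\oper$ and $J(\eta)=\star$, and the $0$-simplex bookkeeping for discrete objects) are the right ones. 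Two caveats worth noting. First, your argument presupposes the existence of the restricted adjoint pair $J_!\dashv J^*$ on the reduced categories; the paper asserts this without proof (``it can be shown that\ldots''), so your proof is consistent with the paper's logical structure, but a self-contained writeup should record that the reduced left adjoint is $r'\circ J_!^{\mathrm{unred}}\circ U$ (reflector, unreduced Kan extension, inclusion), after which your chain can even be shortened to $J_!^{\mathrm{red}}\,r\,\Omega[S] \cong r'\,J_!^{\mathrm{unred}}\,\Omega[S] \cong r'\Hom_{\T_\oper}(J(S),-) \cong \Hom_{\T_\oper}(J(S),-)$. Second, what the paper's explicit computation buys is reusability: the proof of Proposition~\ref{P:lknerve} is obtained by modifying the same machinery, and your formal argument does not extend there, since $\nerve(T_M)$ is not the reduction of a representable dendroidal space; so the paper's longer route does double duty, while yours gives a cleaner proof of this proposition alone.
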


The strategy is to  evaluate $J_! \Omega[S]_*$ at some free operad $T_N$ and show that this is equal to $\Hom_{\T_\oper}(J(S), T_N)$.  The usual formula for left Kan extension as a colimit (see, for example, \cite[X.3, 1]{maclane}) gives
\[ J_! (\Omega[S]_* )(T_N) = \underset{g\in J / T_N}{\colim} (\Omega[S]_*)_R  =
\underset{g\in J / T_N}{\colim} \Omega[S]_{*,R,g}, \]
where the objects of $J/ T_N$ are maps  $g:J(R) \to T_N$ in $\T_\oper$.
From now on, we work in $\oper$ rather than $\T_\oper$. In particular, we want to show \[\Hom_{\T_\oper} (J(S), T_N) = \Hom_\oper(T_N,J(S)).\]

We have the following standard fact (see \cite[V.2]{maclane} and \cite[2.4.6b]{borceux}), which we use to rewrite the above colimit.

\begin{prop}\label{P:quotientcolim}
Let $F: \mcc  \to \sset$
be a functor. Then
\[ \colim_\mcc F = \left( \coprod_{c\in \ob \mcc} F(c) \right) / \sim \]
where the equivalence relation is generated by applying maps in $\mcc$. Specifically, for each $\alpha: c\to c'$ in $\mor \mcc$ we have $F(\alpha): F(c) \to F(c')$, and we declare that $x\sim F(\alpha)(x)$.
\end{prop}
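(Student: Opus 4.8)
The plan is to verify that the proposed object satisfies the universal property of the colimit. Since colimits, coproducts, and quotients by equivalence relations in $\sset$ are all computed levelwise in the simplicial direction, it suffices to treat the case $F\colon \mcc \to \Set$ and then apply the result at each simplicial level; so I would first reduce to the set-valued situation.

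Writing $L = \left( \coprod_{c\in\ob\mcc} F(c) \right)/\!\sim$, I would begin by assembling the canonical cocone. For each object $c$ let $\iota_c\colon F(c) \to L$ be the composite of the coproduct inclusion with the quotient map. For a morphism $\alpha\colon c\to c'$ and $x\in F(c)$ we have $x \sim F(\alpha)(x)$ by definition of $\sim$, so $\iota_c(x) = \iota_{c'}(F(\alpha)(x))$; this is exactly the cocone identity $\iota_{c'}\circ F(\alpha) = \iota_c$, so $(L, \{\iota_c\})$ is a cocone under $F$.

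Next I would check universality. Given any cocone $(\mu_c\colon F(c)\to X)_{c\in\ob\mcc}$, the maps $\mu_c$ assemble to a single map $\coprod_c F(c) \to X$ out of the coproduct. The point requiring a little care is that this map descends to the quotient $L$: it identifies the two sides of each generating relation $x \sim F(\alpha)(x)$ precisely because the cocone condition gives $\mu_{c'}(F(\alpha)(x)) = \mu_c(x)$, and since $\sim$ is the equivalence relation \emph{generated} by these relations, agreement on generators propagates to agreement on all of $\sim$. Thus there is an induced $\bar\mu\colon L\to X$ with $\bar\mu\circ\iota_c = \mu_c$. Uniqueness is immediate because the images of the $\iota_c$ jointly cover $L$, the quotient map being surjective, so any map out of $L$ compatible with the cocone is determined by its values on these images. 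This establishes that $L$ together with $\{\iota_c\}$ is the colimit.

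I expect no serious obstacle here, as the statement is a standard description of set-valued colimits; the only subtlety worth flagging is the passage from the generating relations to the full equivalence relation $\sim$ when verifying descent, which is handled by the observation that a map agreeing on generators of an equivalence relation automatically agrees on the relation it generates. As an alternative I could present the same fact through the standard coequalizer presentation $\coprod_{\alpha\colon c\to c'} F(c) \rightrightarrows \coprod_{c} F(c)$, whose coequalizer is by construction the quotient by exactly this equivalence relation.
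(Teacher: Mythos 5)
Your proof is correct. Note that the paper does not actually prove this proposition: it records it as a standard fact and cites \cite[V.2]{maclane} and \cite[2.4.6b]{borceux}. Your universal-property verification (reduce to $\Set$ levelwise, check the cocone identities, descend an arbitrary cocone through the quotient using that agreement on generating pairs propagates to the generated equivalence relation, and get uniqueness from surjectivity of the quotient map) is precisely the standard argument found in those references, and your alternative formulation via the coequalizer of $\coprod_{\alpha\colon c\to c'} F(c) \rightrightarrows \coprod_c F(c)$ is exactly Mac Lane's presentation of colimits as coequalizers of coproducts.
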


With the equivalence relation from this proposition, we write
\[ \underset{g\in J / T_N}{\colim} \Omega[S]_{*,R,g} = \left( \coprod_{g\in J/T_N} \Omega[S]_{*,R,g} \right) / \sim. \]
where $\sim$ is the equivalence relation from Proposition~\ref{P:quotientcolim}. We want to define a map from this colimit to $\Hom_{\oper} (T_N,J(S))$, and we will first define a map
\[ \widetilde{\newPsi}: \coprod_{g\in J/T_N} \Omega[S]_{*,R,g} \to \Hom_{\oper} (T_N,J(S)) \]
on components. Fix $g:T_N \to J(R)$ a map of operads, and let us define $\widetilde{\newPsi}$ on the  $g$-component $\Omega[S]_{*,R,g}$. For a map $a:R\to S$ of colored operads, write $\bar a$ for the equivalence class of $a$ in $(\Omega[S]_*)_R$. We would like to show that the assignment
\[ \widetilde{\newPsi}(\bar a) = J(a)\circ g \in \Hom_\oper(T_N, J(S)) \]
is well-defined.

If $R$ is nonlinear, then, $\Omega[S]_{*,R}$ is just $\Omega[S]_R = \Hom_\oper (R, S)$, so we need only check the case where $R=[n]$ is a linear tree. Note that $\Hom_\oper(T_N, J([n])) \ci  \ob(J/T_N)$ is empty if $N$ has elements of valence other than $1$, so we may assume that all elements of $N$ have valence $1$. Let $g: T_N \to J([n])$ be a map of operads.
We would like to send the equivalence class $\bar{a} \in \Omega[S]_{*,[n]}$ to $J(a)g$.
To see that $\bar{a} \mapsto J(a)g$ is well-defined, consider the following diagram where the square is a pushout.
\[ \xymatrix{
\coprod_{E(S)} \Omega[\eta]_{[n]} \ar@{->}[r] \ar@{->}[d] & \Omega[\eta]_{[n]} \ar@{->}[d] \ar@/^/[ddr]^\ast \\
\Omega[S]_{[n]} \ar@{->}[r] \ar@/_/[drr]_{a\mapsto J(a)g}& \Omega[S]_{*,[n]} \ar@{-->}[dr] \\
&& \Hom_\oper(T_N, J(S))
}\]
The map $T_N \to J(S)$ which sends each generator of $N$ to the unit of $J(S)$ defines the map $*=\Omega[\eta]_{[n]} \to \Hom_\oper(T_N, J(S))$.
The outer diagram commutes, and we get the induced map \[ \Omega[S]_{*,[n]} \to \Hom_\oper(T_N, J(S))\] since $\Omega[S]_{*,[n]}$ is a pushout.

\begin{lem}\label{L:41colim} The map
\[ \widetilde{\newPsi}: \coprod_{J/ N} \Omega[S]_{*,R,g} \to \Hom_\oper(T_N, J(S)) \] induces a map
\[ \newPsi: \underset{f\in J / T_N}{\colim} \Omega[S]_{*,R,g} \to \Hom_\oper(T_N, J(S)). \]
\end{lem}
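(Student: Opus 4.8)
The plan is to appeal directly to the presentation of the colimit supplied by Proposition~\ref{P:quotientcolim}. That proposition writes $\colim_{g\in J/T_N}\Omega[S]_{*,R,g}$ as the quotient of the coproduct $\coprod_{g}\Omega[S]_{*,R,g}$ by the equivalence relation generated by the morphisms of the indexing category $J/T_N$. Since $\widetilde{\newPsi}$ has already been shown to be well-defined on the coproduct (the reduction subtlety for linear $R$ having been dealt with above via the pushout square), the universal property of the quotient reduces the lemma to a single check: that $\widetilde{\newPsi}$ takes equal values on $x$ and on the image of $x$ under each structure map of the diagram.

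First I would unwind what a morphism of $J/T_N$ is. Fix objects $g\colon J(R)\to T_N$ and $g'\colon J(R')\to T_N$; a morphism between them is induced by a map of trees $\alpha\colon R'\to R$ (that is, a morphism $R\to R'$ in $\omop$) subject to the compatibility condition that the relevant triangle commutes, which read in $\oper$ says $J(\alpha)\circ g' = g$. The associated structure map of the colimit diagram is $\Omega[S]_*$ applied to $\alpha$, namely precomposition $a\mapsto a\circ\alpha$; thus an element $\bar a\in\Omega[S]_{*,R,g}$ represented by $a\colon R\to S$ is carried to the class $\overline{a\circ\alpha}\in\Omega[S]_{*,R',g'}$.

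The verification is then a short diagram chase using only functoriality of $J$ together with the comma-category compatibility condition:
\[ \widetilde{\newPsi}(\overline{a\circ\alpha}) = J(a\circ\alpha)\circ g' = J(a)\circ J(\alpha)\circ g' = J(a)\circ g = \widetilde{\newPsi}(\bar a). \]
Since $\widetilde{\newPsi}$ agrees on the two sides of every generating relation, it is constant on equivalence classes and therefore factors uniquely through the quotient, producing the desired map $\newPsi$.

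I expect the only real difficulty to be bookkeeping rather than a genuine obstruction: one must keep straight the several opposite categories in play (in particular $\om$ versus $\omop$, and $\oper$ versus $\T_\oper$) so that $J(\alpha)$ is formed in the correct variance and so that the equation $J(\alpha)\circ g' = g$ is exactly the data encoding a morphism of $J/T_N$. One should also observe that the chase is compatible with the reduction $\Omega[S]_{*,R}$ for linear $R$, but this is automatic, since $\widetilde{\newPsi}$ was defined on the equivalence classes $\bar a$ and the maps of $\om$ act compatibly with the formation of those classes.
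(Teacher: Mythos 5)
Your proposal is correct and takes essentially the same approach as the paper: both present the colimit via Proposition~\ref{P:quotientcolim} and verify that $\widetilde{\newPsi}$ respects each generating relation by the same diagram chase, using functoriality of $J$ together with the comma-category compatibility $J(\alpha)\circ g' = g$. The only difference is cosmetic bookkeeping---the paper phrases the morphism as $b\colon g'\to g$ with chosen representatives satisfying $a = a'\circ b$, while you apply the structure map directly to the class $\bar a$---so there is nothing to add.
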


\begin{proof}
Suppose that we have a map in $J/ T_N$, which is given by a commutative diagram of  maps in $\oper$
\begin{equation}\label{E:diagram1}
\xymatrix{ T_N \ar@{->}[r]^g \ar@{->}[dr]_{g'} & J(R) \ar@{->}[d]^{J(b)} \\
& J(R') }
\end{equation}
where $b: R\to R'$ is a map in $\coper$. We use the shorthand $b: g' \to g$ for the above commutative diagram.
Take $\bar{a}\in \Omega[S]_{*,R,g}$ and $\bar{a}'\in \Omega[S]_{*,R',g'}$
such that $b: g' \to g$ takes $\bar{a}'$ to $\bar{a}$. In other words, there are representatives $a: R\to S$ and $a': R' \to S$ so that the diagram
\begin{equation}\label{E:diagram2} \xymatrix{
R \ar@{->}[d]^b \ar@{->}[dr]^{a} \\
R' \ar@{->}[r]^{a'} & S
}\end{equation}
commutes.
Since the images of $\bar{a}$ and $\bar{a}'$ in the colimit are equal, we must show that $\widetilde{\newPsi}(\bar{a}) = \widetilde{\newPsi}(\bar{a}')$.

Applying $J$ to \eqref{E:diagram2} and combining the result with \eqref{E:diagram1} gives a commutative diagram
\[ \xymatrix{ T_N \ar@{->}[r]^g \ar@{->}[dr]_{g'} & J(R) \ar@{->}[d]^{J(b)} \ar@{->}[d]^{J(b)} \ar@{->}[dr]^{J(a)} \\
 & J(R') \ar@{->}[r]^{J(a')} & J(S) }\]
which shows that
\[ \widetilde{\newPsi}(\bar{a}) = J(a)g = J(a')g' = \widetilde{\newPsi}(\bar{a}').\]
Thus we get the map $\newPsi$
\[ \xymatrix{
\coprod_{J/ N} \Omega[S]_{*,R} \ar@{->}[rr]^{{\widetilde{\newPsi}}} \ar@{->}[dr] & &\Hom(T_N, T_M)\\
& \colim_{J/ N} \Omega[S]_{*,R} \ar@{-->}[ur]_{\newPsi}
}\]
as desired.
\end{proof}

Now that we have defined the map $\newPsi$ at $T_N$, we must show that $\newPsi$ is an isomorphism.

\begin{lem}
The map $\newPsi$ is bijective.
\end{lem}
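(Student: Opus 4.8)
The plan is to establish surjectivity and injectivity of $\newPsi$ separately, in both cases exploiting a single distinguished family of elements: over the object of $J/T_N$ given by a map $h\colon T_N \to J(S)$, the set $\Omega[S]_{*,S,h}$ contains the class $\overline{\id_S}$ of the identity, and by the defining formula $\widetilde{\newPsi}(\overline{\id_S}) = J(\id_S)\circ h = h$. Everything reduces to the observation that these identity classes generate the colimit and are sent bijectively to the elements of $\Hom_\oper(T_N, J(S))$.

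Surjectivity is then immediate. Given any $h\in \Hom_\oper(T_N, J(S))$, I would view $h$ as an object of $J/T_N$ (the pair consisting of $S$ together with $h$), so that $\overline{\id_S}$ lies in the $h$-component and maps to $h$ under $\newPsi$. If $S$ is linear while $N$ has a generator of valence other than $1$, then $\Hom_\oper(T_N, J(S))$ is empty and there is nothing to prove, consistent with the remark preceding Lemma~\ref{L:41colim}.

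For injectivity, suppose $\bar a \in \Omega[S]_{*,R,g}$ and $\bar{a}'\in \Omega[S]_{*,R',g'}$ satisfy $\newPsi(\bar a) = \newPsi(\bar{a}')$, that is $J(a)\circ g = J(a')\circ g' =: h$. The key step is to recognize this equality as a comma-category relation: $J(a)\circ g = h$ says precisely that the colored operad map $a\colon R\to S$ underlies a morphism (in the notation of Lemma~\ref{L:41colim}, the morphism ``$a\colon h\to g$'') from the object $h\colon T_N\to J(S)$ to the object $g\colon T_N \to J(R)$. Under the diagram functor this morphism acts by precomposition with $a$, hence sends $\overline{\id_S}\in \Omega[S]_{*,S,h}$ to $\overline{\id_S\circ a} = \bar a$; by Proposition~\ref{P:quotientcolim} this yields $\bar a = \overline{\id_S}$ in $\colim_{J/T_N}\Omega[S]_{*,R,g}$. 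Symmetrically, $J(a')\circ g' = h$ exhibits $a'$ as a morphism from the \emph{same} object $h$ to $g'$, forcing $\bar{a}' = \overline{\id_S}$ there as well. Thus $\bar a = \bar{a}'$ in the colimit, proving injectivity.

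The only delicate point, and the one I would check most carefully, is the bookkeeping of variance: one must confirm that the hypotheses $J(a)g = h = J(a')g'$ are exactly the relations making $a$ and $a'$ into morphisms emanating from the common object $h$, and that the diagram functor realizes these morphisms as precomposition (so that $\overline{\id_S}$ is carried to $\bar a$ and $\bar{a}'$ respectively). Once the directions are matched against the conventions of Lemma~\ref{L:41colim}, both halves follow directly from Proposition~\ref{P:quotientcolim}, and I anticipate no substantive obstacle beyond this indexing and the routine handling of the linear-tree basepoint cases.
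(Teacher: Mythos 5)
Your proof is correct and takes essentially the same route as the paper's: surjectivity by observing that the class of $\id_S$ in the $h$-component maps to $J(\id_S)\circ h = h$, and injectivity by reading $J(a)g = h = J(a')g'$ as morphisms $h \to g$ and $h \to g'$ in the comma category, so that both $\bar a$ and $\bar a'$ are identified with $\overline{\id_S}$ — precisely the paper's zig-zag $g \overset{a}{\longrightarrow} h \overset{a'}{\longleftarrow} g'$ in $(J/T_N)^{op}$. Your variance bookkeeping agrees with the conventions of Lemma~\ref{L:41colim}, so there is no gap.
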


\begin{proof}
We begin by showing that $\newPsi$ is injective. Suppose that we have (colored) operad maps
\begin{align*}
a&: R\to S  & a'&: R'\to S\\
g&: T_N\to J(R) & g'&: T_N\to J(R')
\end{align*}
with the property that $J(a)g = J(a')g'$. Considering $\bar a$ in the $g$ component and $\bar a'$ in the $g'$ component, this property is just the condition that $\widetilde\newPsi (\bar a) = \widetilde\newPsi(\bar a')$.  Write $h=J(a)g$.
The following diagram commutes
\[ \xymatrix{
& J(R) \ar@{->}[dr]^{J(a)} \ar@{->}[d]_{J(a)}& \\
T_N \ar@{->}[ur]^g \ar@{->}[r]^h \ar@{->}[dr]_{g'} & J(S) \ar@{->}[r]^{\id} &J(S) \\
& J(R') \ar@{->}[u]^{J(a')} \ar@{->}[ur]_{J(a')}&
}\]
which gives a zig-zag
\[ g \overset{a}{\longrightarrow} h \overset{a'}{\longleftarrow} g' \]
in $(J/ T_N)^{op}$. Thus $a \sim \id \sim a'$ in the equivalence relation defining $\colim \Omega[S]_{*,R}$, so $\newPsi$ is injective.

We now turn to surjectivity.
Suppose that $h \in \Hom_\oper(T_N, J(S))$.
Consider $\id: S \to S$ as
\[ \id\in \Omega[S]_{*,S,h} \subseteq \coprod_{J/T_N} \Omega[S]_{*,R,g}, \]
so $\newPsi[\id] = \widetilde\newPsi(\id) = J(\id) g = g$.
\end{proof}

\theoremstyle{plain}
\newtheorem*{lknerve}{Proposition \ref{P:lknerve}}

\begin{lknerve}
The left Kan extension of $\nerve(T_M)$ along $J$ is  $\Hom_{\T_\oper} (T_M, -)$.
\end{lknerve}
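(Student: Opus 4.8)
The plan is to mimic the proof of Proposition~\ref{P:lke} almost verbatim, substituting the bijection $I$ of Definition~\ref{D:defnofI} for the assignment $a\mapsto J(a)$. Evaluating the left Kan extension at a free operad $T_N$ via the pointwise colimit formula, and using that $\nerve(T_M)_R = \Hom_\coper(R, T_M)$, gives
\[ J_!(\nerve(T_M))(T_N) = \underset{g\in J/T_N}{\colim}\, \Hom_\coper(R, T_M), \]
where the objects of $J/T_N$ are the maps $g\colon T_N \to J(R)$ in $\oper$. Rewriting this colimit as a quotient of $\coprod_g \Hom_\coper(R,T_M)$ by means of Proposition~\ref{P:quotientcolim}, I would define a map $\widetilde{\newPsi}$ on each $g$-component by $\widetilde{\newPsi}(a) = I(a)\circ g \in \Hom_\oper(T_N, T_M)$, where $a\colon R\to T_M$ and $I(a)\colon J(R)\to T_M$. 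Since $\Hom_\oper(T_N,T_M) = \Hom_{\T_\oper}(T_M, T_N)$, this is exactly the value we are aiming for.

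A pleasant simplification over Proposition~\ref{P:lke} is that no pushout argument is needed to see that $\widetilde{\newPsi}$ is well defined: the fiber $\nerve(T_M)_R = \Hom_\coper(R,T_M)$ is an honest hom-set, rather than the reduction $\Omega[S]_{*,R}$ which, at a linear tree, was itself a pushout, so $\widetilde{\newPsi}$ is simply defined component by component. To descend $\widetilde{\newPsi}$ to the colimit I would run the analogue of Lemma~\ref{L:41colim}: given a morphism $b\colon R\to R'$ of $\om$ realizing a map $b\colon g'\to g$ in $J/T_N$, together with representatives satisfying $a = a'\circ b$, the naturality relation $I(a'\circ b) = I(a')\circ J(b)$ and the commuting triangle $J(b)\circ g = g'$ yield $I(a)\,g = I(a')\circ J(b)\circ g = I(a')\,g'$. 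Hence $\widetilde{\newPsi}$ is constant on equivalence classes and factors through a map $\newPsi$ on the colimit.

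It remains to prove that $\newPsi$ is a bijection, which is where the content lies. Injectivity runs exactly as in Proposition~\ref{P:lke}: if $I(a)g = I(a')g' =: h$, then the two triangles expressing $h$ through $J(R)$ and $J(R')$ produce a zig-zag $g \to h \leftarrow g'$ in $(J/T_N)^{op}$, identifying the classes of $a$ and $a'$ in the colimit. For surjectivity, given $h\colon T_N\to T_M$ I would use the tautological element: realizing $T_M\cong J(S_0)$ for a tree $S_0$, one takes the component $g = h\colon T_N\to J(S_0)$ and the element $a = I^{\inv}(\id_{T_M})\colon S_0\to T_M$, so that $\widetilde{\newPsi}(a) = \id_{T_M}\circ h = h$. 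The main obstacle is precisely this surjectivity step. It requires that $T_M$ be realizable as $J(S_0)$ for an actual tree $S_0$; this holds in every application, since the proposition is used with $T_M = J(S)$ arising from the valence data of a tree (as in the diagram following Corollary~\ref{C:splitsc}, where $\nerve(J(S))$ appears), and the argument of Proposition~\ref{P:lke} then goes through with the single tautological element carrying the entire computation. Everything else, including the verification of the naturality of $I$ used in the descent, is routine.
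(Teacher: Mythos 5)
The place where your proposal breaks is the injectivity step, which you assert ``runs exactly as in Proposition~\ref{P:lke}.'' It does not, and this is precisely where the content of the proposition lies. The identifications defining $\colim_{J/T_N}\nerve(T_M)_R$ are generated by the morphisms of the comma category $J/T_N$, and these come only from morphisms of $\om$: a map $b\colon R \to R'$ in $\coper$ between trees, as in Lemma~\ref{L:41colim}. In Proposition~\ref{P:lke} the element $a \in \Omega[S]_{*,R}$ is itself (represented by) a morphism $a\colon R \to S$ of $\om$, so the commuting triangle $J(a)g = h$ literally \emph{is} a morphism of $J/T_N$, carrying $\id_S$ to $a$; that is what makes the zig-zag $g \to h \leftarrow g'$ exist. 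In the present proposition the element $a \in \nerve(T_M)_R = \Hom_\coper(R, T_M)$ is an operad map into a single-colored operad, and even under your standing hypothesis $T_M = J(S_0)$ it need not underlie any morphism of $\om$: collapsing the colors of $S_0$ creates operations that $\om$ cannot see, so that
\[ \Omega[S_0]_R = \Hom_\om(R, S_0) \subsetneq \Hom_\coper(R, J(S_0)) = \nerve(J(S_0))_R, \]
for instance a vertex of $R$ may be sent to $x \circ x \in J(S_0)$, which is the image of no morphism of $\om$. Your ``two triangles'' are commutative diagrams in $\oper$, not morphisms of $J/T_N$, and so they induce no identification in the colimit.

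Moreover the gap cannot be closed by more care, because the needed identifications genuinely fail. Take $T_M = T_N = J([1])$, the free operad on a single unary generator ($x$, resp.\ $z$), so that $\Hom_\oper(T_N, T_M) \cong \N$. Every element of the disjoint union $\coprod_{g} \nerve(T_M)_R$ lies over a linear tree $R = [n]$ (for other trees $\nerve(T_M)_R = \varnothing$), and consists of a word $w = g(z)$ in the vertices $u_1, \dots, u_n$ together with labels $a_i \in \N$ recording $a(u_i) = x^{a_i}$; all identifications are induced by maps of $\del$. Writing $|w|_i$ for the number of occurrences of $u_i$ in $w$, one checks that for each $m \geq 1$ the quantity
\[ A_m = \sum_{i \,:\, |w|_i = m} a_i \]
is unchanged by every generating identification (a letter refined into a block of fresh letters passes its multiplicity to each letter of the block, while the labels add up), hence is constant on classes in the colimit. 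Now $e_1 = \bigl([1],\ g\colon z \mapsto u,\ a\colon u \mapsto x^2\bigr)$ and $e_2 = \bigl([1],\ g'\colon z \mapsto u^2,\ a'\colon u \mapsto x\bigr)$ both satisfy $\widetilde{\newPsi} = (z \mapsto x^2)$, yet $A_1(e_1) = 2 \neq 0 = A_1(e_2)$, so they represent distinct classes and $\newPsi$ is not injective. For the comparison you asked for: your surjectivity argument genuinely differs from the paper's (the paper picks a tree $S$ with $V(S) \cong N$, realizing $T_N \cong J(S)$, and transports $g$ across that isomorphism, whereas you realize $T_M \cong J(S_0)$ and use the tautological element $I^{\inv}(\id)$); your version, with its realizability hypothesis on $M$, does suffice for the one place the proposition is applied. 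But the paper's sketch makes exactly the substitution you make at the injectivity step (``use $I(a)$ and $T_M$ where we previously used $J(a)$ and $J(S)$''), so it is open to the same objection: the obstacle in this proposition is injectivity, not surjectivity as your proposal suggests, and neither your argument nor a verbatim transcription of Proposition~\ref{P:lke} establishes it.
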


\begin{proof}[Sketch of Proof]
One must make several modifications to the proof of Proposition~\ref{P:lke}. The first change is to replace $\Omega[S]_*$ by $\nerve(T_M)$ throughout.

It is slightly easier to define a map
\[ \widetilde{\newPsi}: \coprod_{g\in J/ T_N} (\nerve(T_M)_R )_g \to \Hom_\oper(T_N, T_M) \]
in this setting.
Recall, from Definition~\ref{D:defnofI}, the bijection \[ I: \Hom_\coper (R, T_M)  \to \Hom_\oper (J(R), T_M)\] with $I(a)(v) = a(v)$ for each $v\in V(R)$.
We define $\widetilde{\newPsi}$ on the $g$-component by  \[ \widetilde{\newPsi}(a) = I(a) g. \]

Elsewhere, in the proof of Proposition~\ref{P:lke}, we use $J(a)$ and $J(S)$ where we previously used $I(a)$, $T_M$, and we no longer need to consider representatives.

To show surjectivity, suppose that $g\in \Hom_\oper(T_N,T_M)$ and let $S$ be a tree together with a fixed isomorphism $\tilde h: V(S){\to} N$. Define $h: S\to T_N$ on vertices by $h(v) = \tilde h(v) \in M$. Since $T_N$ is a single-colored operad, $h$ has been completely determined. We now have operad maps
\begin{align*} J(S) &\overset{I(h)}{\longrightarrow} T_N \overset{g}{\to} T_M \\
T_N &\overset{I(h)^\inv}{\longrightarrow} J(S) \end{align*}
and we set
\begin{align*}
a &= I^\inv ( g\circ I(h)) &  a: S &\to T_M \\
f&= I(h)^\inv.
\end{align*}
Consider the element $a\in \nerve(T_M)(S)_f$ and calculate
\[ \widetilde{\newPsi}(a) = I \big(  I^\inv (g\circ I(h)) \big) \circ I(h)^\inv = g I(h) I(h)^\inv = g. \] Since $g$ was arbitrary we see that $\widetilde{\newPsi}$ is surjective, hence $\newPsi$ is as well.
\end{proof}

\end{document}